\newtheorem{theorem}{Theorem}[section]
\newtheorem{corollary}{Corollary}
\newtheorem{lemma}[theorem]{Lemma}
\newtheorem{proposition}{Proposition}
\theoremstyle{definition}
\newtheorem{definition}[theorem]{Definition}
\newtheorem{remark}{Remark}
\newcommand{\R}{\mathbb{R}}
\newcommand{\N}{\mathbb{N}}
\newcommand{\C}{\mathbb{C}}
\newcommand{\dd}{\mathrm{d}}
\newcommand{\F}{\mathcal{F}}
\newcommand{\eps}{\varepsilon}
\newcommand{\loc}{_{\text{loc}}}
\def\restriction#1#2{\mathchoice
              {\setbox1\hbox{${\displaystyle #1}_{\scriptstyle #2}$}
              \restrictionaux{#1}{#2}}
              {\setbox1\hbox{${\textstyle #1}_{\scriptstyle #2}$}
              \restrictionaux{#1}{#2}}
              {\setbox1\hbox{${\scriptstyle #1}_{\scriptscriptstyle #2}$}
              \restrictionaux{#1}{#2}}
              {\setbox1\hbox{${\scriptscriptstyle #1}_{\scriptscriptstyle #2}$}
              \restrictionaux{#1}{#2}}}
\def\restrictionaux#1#2{{#1\,\smash{\vrule height .8\ht1 depth .85\dp1}}_{\,#2}} 
    \pgfplotsset{
        colormap={parula}{
            rgb255=(53,42,135)
            rgb255=(15,92,221)
            rgb255=(18,125,216)
            rgb255=(7,156,207)
            rgb255=(21,177,180)
            rgb255=(89,189,140)
            rgb255=(165,190,107)
            rgb255=(225,185,82)
            rgb255=(252,206,46)
            rgb255=(249,251,14)
        },
    }
\title[Small defects reconstruction in waveguides] %Use the shortened version of the full title
      {Small defects reconstruction in waveguides from multifrequency one-side scattering data}
\author[E. Bonnnetier, A. Niclas, L. Seppecher, G. Vial]{}
\subjclass{35R30, 78A46}
 \keywords{Inverse problem, Helmholtz equation, waveguides, multi-frequency data, Born approximation}
 \email{eric.bonnetier@univ-grenoble-alpes.fr}
 \email{angele.niclas@ec-lyon.fr}
 \email{laurent.seppecher@ec-lyon.fr}
 \email{gregory.vial@ec-lyon.fr}
\thanks{$^*$ Corresponding author}
\begin{document}
\maketitle

% Enter the first author's name and address:
\centerline{\scshape \'Eric Bonnetier}
\medskip
{\footnotesize
% please put the address of the first author
 \centerline{Institut Fourier, Université Grenoble Alpes, France}
} % Do not forget to end the {\footnotesize by the sign }

\medskip

\centerline{\scshape Angèle Niclas$^*$, Laurent Seppecher, Grégory Vial}
\medskip
{\footnotesize
 % please put the address of the second  and third author
 \centerline{Institut Camille Jordan, \'Ecole Centrale Lyon, France}
}

\bigskip

% The name of the associate editor will be entered by an editorial staff
% "Communicated by the associate editor name" is not needed for special issue.
 \centerline{(Communicated by the associate editor name)}

%The abstract of your paper
\begin{abstract}
Localization and reconstruction of small defects in acoustic or electromagnetic waveguides is of
crucial interest in nondestructive evaluation of structures. The aim of this work is to present
a new multi-frequency inversion method to reconstruct small defects in a 2D waveguide. Given
one-side multi-frequency wave field measurements of propagating modes, we use a Born approximation to provide a
$\text{L}^2$-stable reconstruction of three types of defects: a local perturbation inside the waveguide, a
bending of the waveguide, and a localized defect in the geometry of the waveguide. This method
is based on a mode-by-mode spacial Fourier inversion from the available partial data in the Fourier domain.
Indeed, in the available data, some high and low spatial frequency information on the defect are missing. We overcome this issue using both a compact support hypothesis and a minimal smoothness hypothesis on the defects. We also provide a suitable numerical method for efficient 
reconstruction of such defects and we discuss its applications and limits.
\end{abstract}

%The title of your section 1
\section{Introduction}

In this article, we present a method to detect and reconstruct small defects in a waveguide of dimension $2$ from multi-frequency wave field measurements. The measurements are taken on one section of the waveguide, and we assume that only the propagative modes can be detected. Indeed, in most of practical cases, measurements are made far from the defects where the evanescent modes vanish. In a waveguide $\Omega\subset \R^2$, in the time harmonic regime the wave field $u_k$ satisfies the Helmholtz equation 
\begin{equation}
\Delta u_k +k^2(1+q)u_k=-s,
\end{equation}
where $k$ is the frequency, $q$ is a compactly supported bounded perturbation inside the waveguide and the function $s$ is a source of waves.

\begin{figure}[h]
\begin{flushleft} \hspace{2cm}\begin{tikzpicture}
\draw (0,0.5) node{$(1)$};
\draw (1,1) -- (7,1);
\draw (1,0) -- (7,0);
\draw [red] (1.4,-0.2)--(1.4,1.2);
\draw [white,fill=gray!40] (2.3,0.6) circle (0.3);
\draw [<-] [domain=1.2:1.7] [samples=200] plot (\x+0.35,{0.2*sin(15*\x r)+0.6});
\draw [->] [domain=2.5:3.6] [samples=200] plot (\x,{-0.2*sin(15*\x r)+0.65});
\draw [<-] [domain=4.8:5.9] [samples=200] plot (\x-0.3,{0.2*sin(15*\x r)+0.6});
\draw (2.3,0.6) node{$s$};
\draw [red] (1.4,-0.5) node{$\Sigma$};
\draw (3.2,0.25) node{$u_k^\text{inc}$};
\draw (5.2,0.25) node{$u_k^s$};
\draw (6,0.6) [white,fill=gray!20] ellipse (0.4 and 0.3);
\draw (6,0.6) node{$q$};
\end{tikzpicture}

\hspace{2cm}\begin{tikzpicture}
\draw (0,0.5) node{$(2)$};
\draw (1,1) -- (3,1);
\draw (1,0) -- (3,0);
\draw [red] (1.4,-0.2)--(1.4,1.2);
\draw (3,1) arc(90:75:11);
\draw (5.5882,-0.3407)-- ++ (1.4489,-0.3882);
\draw (5.8470,0.6252)-- ++ (1.4489,-0.3882);
\draw (3,0) arc(90:75:10);
\draw [white,fill=gray!40] (2.3,0.6) circle (0.3);
\draw [<-] [domain=1.2:1.7] [samples=200] plot (\x+0.35,{0.2*sin(15*\x r)+0.6});
\draw [->] [domain=2.5:4.4] [samples=200] plot (\x,{-0.2*sin(15*\x r)+0.65});
\draw [<-] [domain=4.8:6] [samples=200] plot (\x,{0.2*sin(15*\x r)+0.2});
\draw (2.3,0.6) node{$s$};
\draw [red] (1.4,-0.5) node{$\Sigma$};
\draw (3.2,0.25) node{$u_k^{\text{inc}}$};
\draw (5.9,-0.2) node{$u_k^s$};
\end{tikzpicture}

\hspace{2cm}\begin{tikzpicture}
\draw (-2,0.5) node{$(3)$};
\draw (-1,1) -- (3,1);
\draw (5,1) -- (5.5,1);
\draw (-1,0) -- (2,0);
\draw (4,0) -- (5.5,0);
\draw [domain=3:5] [samples=200] plot (\x,{-5/16*(\x-3)^2*(\x-5)^2+1});
\draw [domain=2:4] [samples=200] plot (\x,{-4/16*(\x-2)^2*(\x-4)^2});
\draw [white,fill=gray!40] (0.3,0.6) circle (0.3);
\draw [<-] [domain=1.2:1.7] [samples=200] plot (\x+0.35-2,{0.2*sin(15*\x r)+0.6});
\draw [->] [domain=2.5:3.6] [samples=200] plot (\x-2,{-0.2*sin(15*\x r)+0.65});
\draw [red] (-0.6,-0.2)--(-0.6,1.2);
\draw [red] (-0.6,-0.5) node{$\Sigma$};
\draw [<-] [domain=4.6:5.9] [samples=200] plot (\x-2.5,{0.2*sin(15*\x r)+0.4});
\draw (0.3,0.6) node{$s$};
\draw (1.2,0.25) node{$u_k^\text{inc}$};
\draw (3,0) node{$u_k^s$};
\end{tikzpicture}
\end{flushleft}

\caption{\label{debut} Representation of the three types of defects: in $(1)$ a local perturbation $q$, in $(2)$ a bending of the waveguide, in $(3)$ a localized defect in the geometry of $\Omega$. A controlled source $s$ generates a wave field $u^\text{inc}_k$. When it crosses the defect, it generates a scattered wave field $u^s_k$. Both $u^\text{inc}_k$ and $u^s_k$ are measured on the section $\Sigma$.}\end{figure}
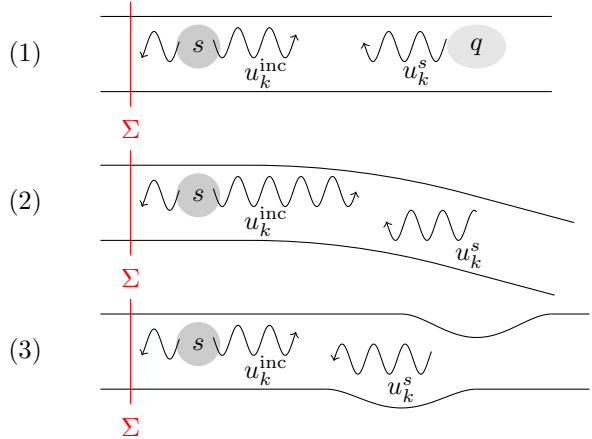

We focus on the inversion of three main types of defects represented in Figure \ref{debut}: a local perturbation of the index $q$, a bend of the waveguide, and a localized defect in the geometry of $\Omega$. The detection of such defects can be used as a non destructive means to monitor pipes, optical fibers, or train rails for instance (see \cite{kharrat1, kharrat2}). A controlled source $s$ generates wave fields in $\Omega$ for some frequencies $k\in K\subset \R^*_+$ and we assume the knowledge of the corresponding measurements $u_k(x,y)$ for every $(x,y)\in \Sigma$ where $\Sigma$ is a fixed section of $\Omega$.

The detection of bends or shape defects in a waveguide is mentioned in the articles \cite{lu1,abra1,abra2}. To solve the forward problem, the authors use a conformal mapping or a local orthogonal transformation to map the geometry to that of a regular waveguide.  This method is very helpful to understand the propagation of waves in irregular waveguides but is not easily adaptable for the inverse problem and for the reconstruction of defects, since the transformation to a regular waveguide is not explicit and proves numerically expensive.

The recovery of inhomogeneities in a waveguide using scattered field data has been extensively studied. In \cite{dediu1}, the authors use a spectral decomposition and assume knowledge of the far-field scattered wave field to reconstruct the inhomogeneities in a 2D waveguide. The authors in \cite{bourgeois1} adapt the Linear Sampling Method \cite{colton2} to waveguides detection of inhomogeneities in 2D or 3D. In \cite{ammari1}, an asymptotic formula of the scattered field is used to localize small inclusions. Periodic waveguides are considered in \cite{bourgeois2}. In all these articles, the frequency in the Helmholtz equation is fixed and it is assumed that incident waves can be sent on every propagative mode in the waveguide. However, as defects may be invisible at some frequencies (as shown in \cite{bonnet1}) the frequency has to be chosen wisely. 

Our work concerns a different approach, also used in \cite{bao1,bao2}, where we assume that data is available for a whole interval of frequencies. This provides additional information that should help not only localize but reconstruct the shape of the defect. The use of multi-frequency data provides uniqueness of the reconstruction (see \cite{acosta1}) and better stability (see \cite{bao3,isakov1,sini1}). In this work we assume that one only send the first propagative mode at different frequencies in the waveguide as an excitation source. This situation seems to correspond to the practice of monitoring pipes in mechanical experiments \cite{kharrat1}. In this study, we assume that the defects are small in amplitude and/or in support in order to approximate the wave field using its Born approximation. This seems to be a reasonable assumption considering the applications that this work intends to address.  This approximation is described in \cite{colton1}, and is also used in \cite{dediu1,ammari1}. Our strategy to study the impact of small geometrical defects is to provide a well suited mapping from the perturbed waveguide to a perfect waveguide that generates some change in the Helmholtz equation itself. Through the reconstruction of these modifications in the equation while assuming a perfect waveguide, it is possible to recover the defects in the geometry.

An important difficulty in detecting inhomogeneities using one sided multi frequencies measurements in a waveguide is that low spacial frequency information carried by vanishing modes about the inhomogeneities may be missing. Indeed, these modes are not measurable in practice due to their exponential decay. 

One of the key results of this article is given by Theorem \ref{therr} that provides conditions to control the error of approximation in the recovery of a function from an incomplete knowledge of its Fourier transform. In this result, we assume that both high frequencies and a reasonable amount of low frequencies are missing. Nevertheless, a stable inversion in $\text{L}^2$ remains possible assuming a reasonable \emph{a priori} knowledge of the smoothness and the support of the unknown perturbation. This result provides a theoretical stability argument that allows us to run a mode-by-mode well-conditioned inversion using a penalized least-square technique. This method is numerically efficient, and can be applied to recover defects of the three different types.

The paper is organized a follows. In section 2, we recall some properties of the forward source problem in a waveguide using the modal decompositions of both the wave field and the source. We then study the inverse source problem with full frequency data and then with partial frequency data.

In section 3, we apply the results to recover all three types of defects that we are interested in: internal inhomogeneities, bending or shape defects. 

In section 4, we present the numerical method used to detect defects and some numerical simulations. To avoid the so called ‘‘inverse crime’’ in the numerical tests, we use two different codes. We use a finite element based solver with PML's \cite{berenger1} to generate the data from a waveguide with defects. Another solver, based on a modal decomposition, allows us to recover the inhomogeneities from the simulated data. Only the second code is used in the inversion procedure.

%The title of your section 2
\section{Forward and inverse source problem in a waveguide}

In this section, we present the tools required to study the forward and inverse source problems in a waveguide. First, we recall some classical results about the forward source problem and modal decomposition. These results can also be found in  \cite{bourgeois1,dediu1}. Next, assuming that the perturbation is small enough, we show existence, uniqueness and stability of a solution to the perturbed forward source problem. Finally, we present an inversion strategy using the measurements of the wave field on a section of the waveguide for full and partial frequency data.

\subsection{Forward source problem in a perfect waveguide}

We consider a 2D infinite perfect waveguide $\Omega=\R\times(0,1)$ in which waves can propagate at frequency $k>0$ according to the homogeneous Helmholtz equation
\begin{equation} \label{bebe} \Delta u_k +k^2u_k=0.\end{equation}
We choose a Neumann condition on the boundary $\partial \Omega$, but this condition can be changed to a Dirichlet or a Robin condition without altering of our results. It is known that the homogeneous Neumann spectral problem for the negative Laplacian on $(0,1)$ has an infinite sequence of eigenvalues $\lambda_n$ for $n\in \N$, and that it is possible to find eigenvectors $\varphi_n$ that form an orthonormal basis of $\text{L}^2(0,1)$. Precisely,
\begin{equation}\lambda_n=n^2\pi^2, \qquad \varphi_n=\left\{\begin{array}{cl} 1 & \text{ if } n=0 ,\\
y\mapsto \sqrt{2}\cos(n\pi y) & \text{ otherwise}.
\end{array}\right. \end{equation}
This basis proves quite helpful in the study of waveguides since every function $f\in \text{L}^2\loc(\Omega)$ can be decomposed as a sum of modes: 
\begin{equation}f(x,y)=\sum_{n\in \N} f_n(x)\varphi_n(y) \quad \text{f. a. e. } (x,y)\in\Omega, \qquad f_n\in \text{L}^2\loc(\R).\end{equation} 

Let $\nu$ be the outward unit normal on $\partial \Omega$. Using this orthonormal basis, the solutions to the homogeneous problem 
\begin{equation}\label{normal}\left\{ \begin{array}{cl} \Delta u_k +k^2 u_k =0 & \text{ in } \Omega, \\ \partial_\nu u_k=0 & \text{ on } \partial\Omega ,\end{array}\right. \end{equation}
are linear combinations of $(x,y)\mapsto\varphi_n(y)e^{\pm ik_n x}$ where $k_n^2=k^2-n^2\pi^2$ and $\text{Re}(k_n)$, $\text{Im} (k_n)\geq 0$. This solution is called the $n$-th mode. 
In the following, we assume that $k_n\neq 0$, meaning that we do not choose a wavelength $k=n\pi$ for $n\in \N$. Two types of modes appear in the decomposition of $u_k$. Propagative modes correspond to $n<k/\pi$ and then $k_n\in \R$, while evanescent modes feature $n> k/\pi$ and $k_n\in i\R$. The amplitude of evanescent modes decays exponentially fast at one end of the waveguide. An extra condition is then needed to ensure the uniqueness of a solution to the Helmholtz problem \eqref{normal}. 

\begin{definition}
A solution $u_k\in \text{H}^2\loc (\Omega)$ of \eqref{bebe} is outgoing if it satisfies the radiation conditions: \begin{equation} \label{sommer}\left| \langle u_k(x,\cdot),\varphi_n\rangle'\frac{x}{|x|}-ik_n\langle u_k(x,\cdot),\varphi_n\rangle \right| \underset{|x|\rightarrow +\infty}{\longrightarrow} 0 \quad \forall n\in \N,\end{equation} 
where $\langle \cdot,\cdot\rangle$ is the inner scalar product in $\text{L}^2(-1,1)$.
\end{definition}

\begin{remark} This condition is an adaptation to our problem of the Sommerfeld condition used in free space. The articles \cite{dediu1,bourgeois1} adopt another radiation condition called Dirichlet to Neumann condition, which is equivalent to our radiation condition when $s$ is compactly supported. \end{remark}

Using the previous conditions, the following proposition holds, the proff of which is given in the Appendix A.
\begin{proposition} \label{source} For every $s\in \text{L}^1(\Omega)\cap \text{L}^2\loc(\Omega)$, the problem 
\begin{equation} \label{directsource} \left\{ \begin{array}{cl} \Delta u_k +k^2 u_k =-s & \text{ in } \Omega, \\ \partial_\nu u_k=0 & \text{ on } \partial\Omega, \\ u_k \text{ is outgoing}, & \end{array}\right. \end{equation}
has a unique solution $u_k\in  \text{H}^2\loc (\Omega)$, which decomposes as 
\begin{equation} \label{inhommod} u_k(x,y)=\sum_{n\in \N} u_{k,n}(x)\varphi_n(y) \quad \text{ where } \quad u_{k,n}(x)=\frac{i}{2k_n}\int_\R s_n(z)e^{ik_n|x-z|}\dd z,\end{equation}
if the decomposition of $s$ is $s(x,y)=\displaystyle\sum_{n\in \N} s_n(x)\varphi_n(y)$.
\end{proposition}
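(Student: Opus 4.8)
The plan is to reduce the two-dimensional problem to a countable family of one-dimensional Helmholtz equations by projecting onto the orthonormal basis $(\varphi_n)_{n\in\N}$, to solve each of them explicitly with an outgoing Green's function, and finally to reassemble the modal series while controlling its convergence in $\text{H}^2\loc(\Omega)$. Concretely, I would write $u_k(x,y)=\sum_n u_{k,n}(x)\varphi_n(y)$ and $s(x,y)=\sum_n s_n(x)\varphi_n(y)$, and project $\Delta u_k+k^2u_k=-s$ onto $\varphi_n$. Integrating the $\partial_{yy}$ contribution by parts twice on $(0,1)$, the boundary terms cancel because both $u_k$ (Neumann condition) and $\varphi_n$ satisfy a homogeneous Neumann condition at $y=0,1$, and $\varphi_n''=-n^2\pi^2\varphi_n$. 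This turns the PDE into the decoupled family of scalar equations $u_{k,n}''+k_n^2u_{k,n}=-s_n$ on $\R$, with $k_n^2=k^2-n^2\pi^2$, while the radiation condition \eqref{sommer} becomes the scalar requirement $|u_{k,n}'(x)\,x/|x|-ik_nu_{k,n}(x)|\to0$ as $|x|\to\infty$ for each $n$.

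Next I would solve each scalar equation. The function $G_n(x)=\tfrac{i}{2k_n}e^{ik_n|x|}$ is the outgoing fundamental solution of $\partial_{xx}+k_n^2$: a direct distributional computation gives $G_n''+k_n^2G_n=-\delta_0$, and one checks the radiation condition at both ends, since for a propagative mode $e^{ik_n|x|}$ is outgoing and for an evanescent mode it decays. Hence $u_{k,n}=G_n*s_n$, which is exactly the announced formula, solves the ODE and the scalar radiation condition. The convolution is well defined because $\|G_n\|_{\text{L}^\infty}\le(2|k_n|)^{-1}$ and $s_n\in\text{L}^1(\R)$, indeed $\|s_n\|_{\text{L}^1(\R)}\le\sqrt2\,\|s\|_{\text{L}^1(\Omega)}$ since $\|\varphi_n\|_{\text{L}^\infty}\le\sqrt2$.

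The main work, and the step I expect to be the real obstacle, is showing that the series $\sum_n u_{k,n}\varphi_n$ converges in $\text{H}^2\loc(\Omega)$ and yields a genuine $\text{H}^2\loc$ solution. On a bounded strip $I\times(0,1)$ the squared $\text{H}^2$ norm is comparable to a weighted $\ell^2$ sum of the $\text{L}^2(I)$ norms of $u_{k,n},u_{k,n}',u_{k,n}''$, the heaviest weight being $n^4\pi^4\|u_{k,n}\|^2$. I would control these for the infinitely many evanescent modes ($n>k/\pi$, where $G_n\in\text{L}^1$) by splitting $s_n$ into its restriction to a slightly larger interval $J\supset I$ and a far part. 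For the near part, Young's inequality gives $\|u_{k,n}\|_{\text{L}^2}\le\|G_n\|_{\text{L}^1}\|s_n\|_{\text{L}^2(J)}$ with $\|G_n\|_{\text{L}^1}=|k_n|^{-2}$ and $\|G_n'\|_{\text{L}^1}=|k_n|^{-1}$, while $u_{k,n}''=-s_n-k_n^2u_{k,n}$ bounds the second derivative; since $n^4/|k_n|^4\to1$, the worst weight is dominated by $C\|s_n\|_{\text{L}^2(J)}^2$, and Parseval's identity $\sum_n\|s_n\|_{\text{L}^2(J)}^2=\|s\|_{\text{L}^2(J\times(0,1))}^2<\infty$ closes the estimate. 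The far part is absorbed by the exponential factor $e^{-|k_n||x-z|}$ together with $s\in\text{L}^1(\Omega)$, and the finitely many propagative modes are handled directly. This yields existence, with the Neumann condition holding termwise and the radiation condition passing to the sum.

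Finally, for uniqueness I would take the difference $v$ of two solutions, which solves the homogeneous problem, so each mode satisfies $v_n''+k_n^2v_n=0$ and $v_n(x)=Ae^{ik_nx}+Be^{-ik_nx}$. Inserting this into the scalar radiation condition and letting $x\to+\infty$ forces $B=0$, while letting $x\to-\infty$ forces $A=0$: for propagative modes because $|e^{\pm ik_nx}|=1$, and for evanescent modes because the surviving exponential blows up. Hence $v_n\equiv0$ for every $n$, so $v=0$, which proves uniqueness.
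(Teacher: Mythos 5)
Your proof is correct and follows the same core strategy as the paper: reduction to a family of one-dimensional problems via the basis $(\varphi_n)$, explicit solution of each by convolution with the outgoing kernel $G_{k_n}(x)=\tfrac{i}{2k_n}e^{ik_n|x|}$, and uniqueness by writing the modes of a homogeneous outgoing solution as combinations of $e^{\pm ik_nx}$ and killing both coefficients with the radiation condition. The one place where you genuinely diverge is the regularity step: the paper works with the variational formulation on $\Omega_r$, identifies $\text{H}^1(\Omega_r)$ with a weighted sequence space through the isomorphism $T$, and then upgrades the $\text{H}^1$ solution to $\text{H}^2(\Omega_r)$ by citing elliptic regularity (Grisvard); you instead prove $\text{H}^2\loc$ convergence of the modal series directly, via Young's inequality bounds on $\Vert G_{k_n}\Vert_{\text{L}^1}$ and $\Vert G_{k_n}'\Vert_{\text{L}^1}$, the identity $u_{k,n}''=-s_n-k_n^2u_{k,n}$, and Parseval --- estimates that the paper itself only develops in Appendix B, in order to prove the continuity bound of Proposition \ref{directh}. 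Your route is more self-contained (no elliptic regularity needed), but it forces the near/far splitting of $s_n$, which is exactly the price of allowing a source that is only $\text{L}^1(\Omega)\cap\text{L}^2\loc(\Omega)$ rather than compactly supported. One small point you should write out rather than wave at: your justification of the scalar radiation condition (``the kernel is outgoing/decaying'') is immediate only when $s_n$ has compact support; for $s_n\in\text{L}^1(\R)$ one should compute, for $x>0$,
\begin{equation*}
u_{k,n}'(x)-ik_nu_{k,n}(x)=\int_{x}^{+\infty}s_n(z)e^{ik_n(z-x)}\,\dd z,
\end{equation*}
whose modulus is bounded by the vanishing tail $\int_{x}^{+\infty}|s_n|$ --- this is precisely where the $\text{L}^1$ hypothesis enters, as the paper's remark following the proposition emphasizes.
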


\begin{remark} It is interesting to note that $s$ does not need to have a compact support in this context, as is the case in the free space Helmholtz problem. \end{remark}

Let $\Omega_r:=(-r,r)\times(0,1)$ where $r>0$ denote a restriction of length $2r$ of the waveguide. We assume that every source defined on $\Omega_r$ is extended by $0$ in $\Omega$ and we define the forward Helmholtz source operator $\mathcal{H}_k$ by
\begin{equation}\label{H} \mathcal{H}_k : \begin{array}{rcl}
\text{L}^2(\Omega_r) & \rightarrow & \text{H}^2(\Omega_r) \\
s &\mapsto& \restriction{u_k}{\Omega_r}
\end{array} \qquad \text{ where } u_k \text{ is the solution to \eqref{directsource}} .\end{equation}
The following proposition quantifies the dependence between $u$ and the source $s$. Its proof is given in Appendix B.
\begin{proposition} \label{directh}
The forward Helmholtz source operator $\mathcal{H}_k$ is well defined, continuous and there exists $C>0$ depending only on $k$ and $r$ such that for every $s\in \text{L}^2(\Omega_r)$, 
\begin{equation}\label{contrH}\Vert u_k \Vert_{\text{H}^2(\Omega_r)}\leq C \Vert s\Vert_{\text{L}^2(\Omega_r)}.\end{equation}
\end{proposition}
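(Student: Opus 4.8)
The plan is to establish the two claims of Proposition~\ref{directh} — well-definedness/continuity and the explicit estimate \eqref{contrH} — by leveraging the explicit modal formula \eqref{inhommod} from Proposition~\ref{source}. The key observation is that since $\Sigma$ restricts to the bounded domain $\Omega_r$, I need only control the solution on a compact piece of the waveguide, and the modal representation makes each Fourier coefficient $u_{k,n}$ an explicit convolution of $s_n$ against the kernel $e^{ik_n|x-z|}$. By Parseval's identity in the transverse variable, the $\mathrm{H}^2(\Omega_r)$ norm decomposes into a sum over modes of one-dimensional norms of $u_{k,n}$ on $(-r,r)$, so the whole estimate reduces to uniform-in-$n$ bounds on each mode together with summability of the transverse derivatives.

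**First I would** fix the mode $n$ and bound $u_{k,n}$ on $(-r,r)$ directly from \eqref{inhommod}. For the propagative modes ($n<k/\pi$, so $k_n\in\R$) the kernel has modulus $1$ and one gets $\|u_{k,n}\|_{\mathrm{L}^\infty(-r,r)}\leq \tfrac{1}{2|k_n|}\|s_n\|_{\mathrm{L}^1(\R)}$; since $s$ is supported in $\Omega_r$, Cauchy–Schwarz gives $\|s_n\|_{\mathrm{L}^1}\leq\sqrt{2r}\,\|s_n\|_{\mathrm{L}^2}$. For the evanescent modes ($n>k/\pi$, $k_n=i|k_n|$) the kernel $e^{-|k_n||x-z|}$ decays, and the convolution is even better behaved; here $|k_n|\sim n\pi$ grows, which supplies the decay needed for summability. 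The only mode requiring individual attention is the borderline case, excluded by the standing hypothesis $k_n\neq0$. I would then differentiate \eqref{inhommod} in $x$: one derivative brings down a factor $ik_n$ that cancels the $1/k_n$ prefactor (using $\tfrac{d}{dx}|x-z|=\pm1$), and from the equation $u_{k,n}''=-(k^2-n^2\pi^2)u_{k,n}-s_n=-k_n^2 u_{k,n}-s_n$ I recover the second $x$-derivative without any new integral, directly in terms of $u_{k,n}$ and $s_n$.

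**Next I would** assemble the global norm. Using the orthonormality of the $\varphi_n$ and Parseval, I write
\begin{equation*}
\|u_k\|_{\mathrm{H}^2(\Omega_r)}^2 \;\lesssim\; \sum_{n\in\N}\Big(\|u_{k,n}\|_{\mathrm{L}^2(-r,r)}^2 + \|u_{k,n}'\|_{\mathrm{L}^2(-r,r)}^2 + \|u_{k,n}''\|_{\mathrm{L}^2(-r,r)}^2 + (1+\lambda_n)\,\|u_{k,n}\|_{\mathrm{L}^2}^2 + \lambda_n\|u_{k,n}'\|_{\mathrm{L}^2}^2\Big),
\end{equation*}
where the $\lambda_n=n^2\pi^2$ weights account for the transverse $y$-derivatives $\partial_y\varphi_n$ and $\partial_{yy}\varphi_n$. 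Substituting the per-mode bounds and the relation $u_{k,n}''=-k_n^2u_{k,n}-s_n$ turns every term into a constant multiple of $\|s_n\|_{\mathrm{L}^2}^2$, and summing over $n$ gives $\sum_n\|s_n\|_{\mathrm{L}^2}^2=\|s\|_{\mathrm{L}^2(\Omega_r)}^2$, which is exactly \eqref{contrH}. Well-definedness of $\mathcal{H}_k$ as a map into $\mathrm{H}^2(\Omega_r)$ follows because the same computation shows the modal series for $u_k$ and its derivatives up to order two converges in $\mathrm{L}^2(\Omega_r)$, and continuity is the content of the estimate itself.

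**The main obstacle** I anticipate is making the transverse-derivative bookkeeping fully rigorous rather than merely formal: I must verify that the series may be differentiated term by term in both $x$ and $y$ up to second order, which amounts to checking the convergence claimed above, with particular care that the high transverse frequencies $\lambda_n$ do not destroy summability. This is where the growth of $|k_n|\sim n\pi$ on the evanescent side is essential — it ensures $\lambda_n\|u_{k,n}\|^2$ and $\lambda_n\|u_{k,n}'\|^2$ remain summable despite the $\lambda_n$ weight, since the evanescent amplitudes decay like $1/|k_n|\sim 1/(n\pi)$. The explicit dependence of $C$ on $k$ and $r$ then emerges transparently: $r$ enters through $\|s_n\|_{\mathrm{L}^1}\leq\sqrt{2r}\,\|s_n\|_{\mathrm{L}^2}$ and the length of the interval $(-r,r)$, while $k$ enters through the finitely many small factors $1/|k_n|$ for the propagative modes, the most delicate being the modes with $k_n$ close to zero.
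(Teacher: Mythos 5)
Your proposal follows essentially the same route as the paper's Appendix B proof: mode-by-mode estimates on the convolution $u_{k,n}=G_{k_n}\ast s_n$ via the $\mathrm{L}^1$ norm of the kernel, recovery of $u_{k,n}''$ from the one-dimensional equation $u_{k,n}''=-k_n^2u_{k,n}-s_n$, and Parseval summation with transverse weights, with the constant degrading as $k$ approaches $\pi\N$.

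One bookkeeping slip should be corrected before the argument closes. In your displayed decomposition of $\Vert u_k\Vert_{\mathrm{H}^2(\Omega_r)}^2$, the pure transverse second derivative contributes $\lambda_n^2\Vert u_{k,n}\Vert_{\mathrm{L}^2(-r,r)}^2$, since $\partial_{yy}(u_{k,n}\varphi_n)=-\lambda_n u_{k,n}\varphi_n$; your weight $(1+\lambda_n)\Vert u_{k,n}\Vert_{\mathrm{L}^2}^2$ omits this quadratic term. Compounding this, the evanescent decay you state, $\Vert u_{k,n}\Vert_{\mathrm{L}^2}$ of order $|k_n|^{-1}\Vert s_n\Vert_{\mathrm{L}^2}$, is too weak to control the corrected term: it would leave contributions of order $n^2\Vert s_n\Vert_{\mathrm{L}^2}^2$, which are not summable. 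The fix is already contained in your setup: for $n>k/\pi$ the kernel equals $\frac{1}{2|k_n|}e^{-|k_n||x|}$, whose $\mathrm{L}^1(\R)$ norm is $|k_n|^{-2}$ (amplitude $1/(2|k_n|)$ times decay length $2/|k_n|$), so Young's inequality gives $\Vert u_{k,n}\Vert_{\mathrm{L}^2}\leq |k_n|^{-2}\Vert s_n\Vert_{\mathrm{L}^2}$, and then $\lambda_n^2|k_n|^{-4}=n^4\pi^4/(n^2\pi^2-k^2)^2$ is bounded uniformly in $n$ --- exactly the factor $n^4\pi^4/|k_n|^4$ that appears in the paper's estimate of $\Vert\nabla^2 u_k\Vert_{\mathrm{L}^2(\Omega_r)}$.
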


\begin{remark} \label{distpi}
We notice from the proof that $C$ increases when the distance between $k$ and $\pi\N$ decreases. \end{remark}

In the following, we also need to consider the problem where the source is located on the boundary of the waveguide. Let $\partial\Omega_{\text{top}}=\R\times \{1\}$ and $\partial\Omega_{\text{bot}}=\R\times \{0\}$. Similarly to Proposition \ref{source}, we have 
\begin{proposition} \label{bord}
Let $b_1,b_2 \in \text{L}^1(\R)\cap\text{H}^{1/2}\loc(\R)$. The Helmholtz equation  
\begin{equation}\label{directbord}
\left\{ \begin{array}{cl} \Delta u_k +k^2 u_k =0 & \text{ in } \Omega, \\
\partial_\nu u_k =b_1 & \text{ on } \partial \Omega_{\text{top}}, \\
\partial_\nu u_k =b_2 & \text{ on } \partial \Omega_{\text{bot}}, \\
u_k \text{ is outgoing},
 \end{array} \right. \end{equation}
has a unique solution $u_k \in  \text{H}^2\loc(\Omega)$, which decomposes as 
\begin{equation}u_k(x,y)=\sum_{n\in \N} u_{k,n}(x)\varphi_n(y) \end{equation}
where
\begin{equation} u_{k,n}(x)=\frac{i}{2k_n}\int_\R (b_1(z)\varphi_n(1)+b_2(z)\varphi_n(0))e^{ik_n|x-z|}\dd z.\end{equation}
\end{proposition}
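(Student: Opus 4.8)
The plan is to mimic the modal strategy behind Proposition \ref{source}, reducing the inhomogeneous Neumann boundary-value problem to a family of decoupled one-dimensional Helmholtz equations (one per transverse mode), in which the boundary data $b_1,b_2$ now play the role of the right-hand side. The only genuinely new point compared with Appendix A is tracking how the Neumann data enter the modal equations through the boundary terms of an integration by parts.

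First I would project the equation onto the orthonormal basis $(\varphi_n)_{n\in\N}$. Writing $u_{k,n}(x)=\int_0^1 u_k(x,y)\varphi_n(y)\,\dd y$ for the modal coefficients of a putative solution $u_k\in\text{H}^2\loc(\Omega)$, I multiply $\Delta u_k+k^2u_k=0$ by $\varphi_n(y)$ and integrate over $y\in(0,1)$. The only nontrivial contribution is the transverse term $\int_0^1\partial_y^2 u_k\,\varphi_n\,\dd y$, which I integrate by parts twice. The key structural fact is that the eigenvectors satisfy the homogeneous Neumann conditions $\varphi_n'(0)=\varphi_n'(1)=0$, so the intermediate term containing $\varphi_n'$ vanishes, while $\varphi_n''=-n^2\pi^2\varphi_n$ produces the eigenvalue. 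The surviving boundary contribution is $[\partial_y u_k\,\varphi_n]_{y=0}^{y=1}$; inserting the data via $\partial_y u_k(x,1)=b_1(x)$ on the top and $\partial_y u_k(x,0)=-b_2(x)$ on the bottom (where $\nu=(0,-1)$), this equals $b_1(x)\varphi_n(1)+b_2(x)\varphi_n(0)$. Collecting the terms and using $k_n^2=k^2-n^2\pi^2$, each coefficient solves
\begin{equation*}
u_{k,n}''+k_n^2 u_{k,n}=-\big(b_1\varphi_n(1)+b_2\varphi_n(0)\big)=:-g_n,
\end{equation*}
together with the scalar outgoing condition read off from \eqref{sommer}, namely $u_{k,n}'(x)\,\mathrm{sgn}(x)-ik_n u_{k,n}(x)\to 0$ as $|x|\to\infty$.

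This is exactly the scalar problem treated in Proposition \ref{source}, with the modal source $s_n$ replaced by $g_n=b_1\varphi_n(1)+b_2\varphi_n(0)$. I would then use the one-dimensional outgoing fundamental solution $G_n(x)=\tfrac{i}{2k_n}e^{ik_n|x|}$, which satisfies $G_n''+k_n^2 G_n=-\delta_0$ and is the unique outgoing Green's function, to obtain $u_{k,n}=G_n*g_n$, i.e.\ precisely
\begin{equation*}
u_{k,n}(x)=\frac{i}{2k_n}\int_\R\big(b_1(z)\varphi_n(1)+b_2(z)\varphi_n(0)\big)e^{ik_n|x-z|}\,\dd z.
\end{equation*}
Since $b_1,b_2\in\text{L}^1(\R)$, each convolution is well defined (the kernel being of modulus $1$ for propagative modes and exponentially decaying for evanescent ones), and per-mode uniqueness follows from uniqueness of the outgoing scalar solution. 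Reassembling $u_k=\sum_n u_{k,n}\varphi_n$ and verifying convergence in $\text{H}^2\loc(\Omega)$ together with the radiation condition \eqref{sommer} reproduces the argument of Appendix A without change.

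The main obstacle is making the two integrations by parts and the boundary traces rigorous: the hypothesis $b_1,b_2\in\text{H}^{1/2}\loc(\R)$ is exactly the regularity for which the Neumann traces of an $\text{H}^2\loc(\Omega)$ field are well defined in $\text{H}^{1/2}\loc(\R)$, so that the projection step is legitimate and the reduced sources $g_n$ are meaningful. Everything else—summability of the modal series in $\text{H}^2(\Omega_r)$ and the mode-by-mode verification of the outgoing condition—is identical to Proposition \ref{source} and introduces no new idea, the only difference being that the decay of $g_n$ in $n$ is now controlled by the boundary data rather than by a volume source.
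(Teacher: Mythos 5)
Your reduction to the modal ODEs is correct and matches the paper's construction: the integration by parts in $y$, the sign bookkeeping giving the boundary term $g_n:=b_1\varphi_n(1)+b_2\varphi_n(0)$, the scalar problem $u_{k,n}''+k_n^2u_{k,n}=-g_n$ with outgoing condition, the convolution formula $u_{k,n}=G_{k_n}\ast g_n$, and the per-mode uniqueness argument are all sound and essentially identical to Appendix A. The genuine gap is in your final step, where you assert that the summability of the reassembled series in $\text{H}^2$ is ``identical to Proposition \ref{source}'' because ``the decay of $g_n$ in $n$ is now controlled by the boundary data.'' In fact $g_n$ does not decay at all: since $|\varphi_n(0)|=|\varphi_n(1)|=\sqrt{2}$, one has $\Vert g_n\Vert_{\text{L}^2(-r,r)}\sim\mathrm{const}$, whereas the volume-source argument rests on $\sum_n\Vert s_n\Vert_{\text{L}^2}^2=\Vert s\Vert_{\text{L}^2(\Omega_r)}^2<\infty$. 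For evanescent modes $\Vert u_{k,n}\Vert_{\text{L}^2(-r,r)}\approx |k_n|^{-2}\Vert g_n\Vert_{\text{L}^2}$, so $\sum_n n^4\pi^4\Vert u_{k,n}\Vert_{\text{L}^2}^2$ \emph{diverges}; worse, the identity $\Vert \partial_y^2 u_k\Vert_{\text{L}^2(\Omega_r)}^2=\sum_n n^4\pi^4\Vert u_{k,n}\Vert_{\text{L}^2(-r,r)}^2$ underlying the Appendix B computation is itself false here, because projecting $\partial_y^2u_k$ on $\varphi_n$ now produces $g_n-n^2\pi^2u_{k,n}$ rather than $-n^2\pi^2u_{k,n}$: the inhomogeneous Neumann data reinstate exactly the boundary term that vanished in the homogeneous case. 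So the summation argument you invoke ``without change'' does not prove $u_k\in\text{H}^2\loc(\Omega)$.

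The paper avoids this obstruction: it sums the modal series only in $\text{H}^1(\Omega_r)$ (which does converge, because $\Vert G_{k_n}\Vert_{\text{L}^1}\lesssim |k_n|^{-2}$ compensates the non-decay of $g_n$), checks that the resulting function satisfies the variational formulation \eqref{varform2} containing the boundary integrals of $b_1v$ and $b_2v$, and then invokes elliptic regularity (Grisvard) with the $\text{H}^{1/2}$ Neumann data to upgrade $\text{H}^1$ to $\text{H}^2(\Omega_r)$. This elliptic-regularity step is an essential ingredient, not a formality; the $\text{H}^{1/2}$ hypothesis enters there, and not (as in your proposal) merely to make traces meaningful. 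To repair your proof you must either insert this step, or replace the summation by a direct estimate of the coefficients $(\partial_y^2u_k)_n=g_n-n^2\pi^2u_{k,n}$ in which the cancellation between $g_n$ and $n^2\pi^2u_{k,n}$ is exhibited explicitly.
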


In the restricted guide $\Omega_r$, we assume again that every source defined on $(-r,r)$ is extended by $0$ on $\R$ and we define the forward Helmholtz boundary source operator $\mathcal{G}_k$ by
\begin{equation} \label{G} \mathcal{G}_k : \begin{array}{rcl}
\left(\widetilde{\text{H}}^{1/2}(-r,r)\right)^2 & \rightarrow & \text{H}^2(\Omega_r) \\
(b_1,b_2) &\mapsto& u_k 
\end{array} \qquad \text{ where } u_k \text{ is the solution to \eqref{directbord}},\end{equation}
and $\widetilde{\text{H}}^{1/2}(-r,r)$ is the closure of $\mathcal{D}(-r,r)$, the space of distributions with support in $(-r,r)$, for the $\text{H}^{1/2}(\R)$ norm (see \cite{mclean1} for more details). A result similar to Proposition \ref{directh} holds:
\begin{proposition} \label{directg}
The forward Helmholtz boundary source operator $\mathcal{G}_k$ is well defined, continuous and there exists a constant $D$ depending only on $k$ and $r$ such that for every $b_1,b_2\in \widetilde{\text{H}}^{1/2}(-r,r)$, 
\begin{equation}\label{contrG}\Vert\mathcal{G}(b_1,b_2)\Vert_{\text{H}^2(\Omega_r)}\leq D \left(\Vert b_1 \Vert_{\widetilde{\text{H}}^{1/2}(-r,r)}+\Vert b_2 \Vert_{\widetilde{\text{H}}^{1/2}(-r,r)}\right).\end{equation}
\end{proposition}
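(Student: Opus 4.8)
The plan is to reduce the statement to the volume source estimate of Proposition \ref{directh} by lifting the boundary data into the interior. Well-definedness and uniqueness of $\mathcal{G}_k(b_1,b_2)=u_k$ are already granted by Proposition \ref{bord}, so the entire content is the bound \eqref{contrG}, from which continuity follows at once since $\mathcal{G}_k$ is linear.

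First I would construct a lifting of the Neumann data. Using the standard inverse trace theorem on the strip $\Omega=\R\times(0,1)$ (see \cite{mclean1}), together with the fact that the zero-extensions of $b_1,b_2\in\widetilde{\text{H}}^{1/2}(-r,r)$ belong to $\text{H}^{1/2}(\R)$, I would produce $w\in\text{H}^2(\Omega)$ with $\partial_\nu w=b_1$ on $\partial\Omega_{\text{top}}$, $\partial_\nu w=b_2$ on $\partial\Omega_{\text{bot}}$, and $\Vert w\Vert_{\text{H}^2(\Omega)}\leq C(\Vert b_1\Vert_{\widetilde{\text{H}}^{1/2}}+\Vert b_2\Vert_{\widetilde{\text{H}}^{1/2}})$. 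Multiplying $w$ by a smooth cutoff $\chi(x)$ equal to $1$ on $[-r,r]$ and supported in $[-r-1,r+1]$ does not change the normal derivatives on the support of the data (where $\chi\equiv1$), and yields a compactly supported lifting, still denoted $w$, supported in $\Omega_{r+1}$ and satisfying the same estimate up to enlarging $C$.

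Next I would set $v:=u_k-w$. Since $u_k$ solves the homogeneous interior equation of \eqref{directbord}, $v$ satisfies $\Delta v+k^2 v=-\tilde{s}$ in $\Omega$ with $\tilde{s}:=\Delta w+k^2 w$, together with $\partial_\nu v=0$ on $\partial\Omega$; moreover $\tilde{s}\in\text{L}^2(\Omega)$ is supported in $\Omega_{r+1}$ and satisfies $\Vert\tilde{s}\Vert_{\text{L}^2}\leq C(\Vert b_1\Vert_{\widetilde{\text{H}}^{1/2}}+\Vert b_2\Vert_{\widetilde{\text{H}}^{1/2}})$. Because $w$ has compact support, $v$ coincides with $u_k$ for large $|x|$ and therefore inherits the outgoing radiation condition. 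Thus $v$ is the unique outgoing solution of the volume source problem \eqref{directsource} associated with $\tilde{s}$, and Proposition \ref{directh} applied on $\Omega_{r+1}$ gives $\Vert v\Vert_{\text{H}^2(\Omega_{r+1})}\leq C\Vert\tilde{s}\Vert_{\text{L}^2(\Omega_{r+1})}$. A triangle inequality $\Vert u_k\Vert_{\text{H}^2(\Omega_r)}\leq\Vert v\Vert_{\text{H}^2(\Omega_{r+1})}+\Vert w\Vert_{\text{H}^2(\Omega_{r+1})}$ then closes the estimate \eqref{contrG}, with a constant $D$ depending only on $k$ and $r$ (through the constant of Proposition \ref{directh} on $\Omega_{r+1}$).

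The main obstacle is the first step: building the $\text{H}^2$ lifting with correct norm control from merely $\text{H}^{1/2}$ Neumann data. This is exactly the \emph{inverse trace} (extension) theorem, and the regularity bookkeeping --- matching the half-integer Sobolev index of the boundary data to $\text{H}^2$ in the interior, while preserving the compact support in $x$ needed to feed Proposition \ref{directh} --- is the delicate point. A naive tensor-product lifting $w(x,y)=b_1(x)g_1(y)+b_2(x)g_0(y)$ fails, since it controls $\Vert\partial_x^2 w\Vert$ only by $\Vert b_i''\Vert$, which is infinite for $b_i\in\text{H}^{1/2}$. On the strip one can instead realize the lifting explicitly through a Fourier transform in $x$ and a $y$-profile adapted to each frequency $\xi$, which is where the gain of regularity is obtained.
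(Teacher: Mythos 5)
Your proof is correct, but it takes a genuinely different route from the paper's. The paper works directly on the truncated guide $\Omega_r$: it invokes the a priori elliptic estimate of Grisvard for the Neumann problem,
\begin{equation*}
\Vert u_k \Vert_{\text{H}^2(\Omega_r)}\leq d(r)\left(\Vert -\Delta u_k +\mu u_k \Vert_{\text{L}^2(\Omega_r)}+\Vert b_1 \Vert_{\text{H}^{1/2}(-r,r)}+\Vert b_2 \Vert_{\text{H}^{1/2}(-r,r)}\right),
\end{equation*}
uses the equation $-\Delta u_k=k^2u_k$ to replace the first term by $(k^2+\mu)\Vert u_k\Vert_{\text{L}^2(\Omega_r)}$, and then bounds this $\text{L}^2$ norm mode by mode, applying Young's inequality to the explicit representation $u_{k,n}=G_{k_n}\ast\left(b_1\varphi_n(1)+b_2\varphi_n(0)\right)$ coming from Proposition \ref{bord}. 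You instead lift the Neumann data to a compactly supported $w\in\text{H}^2$ and subtract, reducing everything to the interior-source estimate of Proposition \ref{directh} on $\Omega_{r+1}$. Your reduction is sound on all the delicate points: the cutoff does not perturb the Neumann traces because $\chi$ depends only on $x$ while $\nu$ is vertical, so $\partial_\nu(\chi w)=\chi\,\partial_\nu w=b_i$; the difference $v=u_k-w$ inherits the outgoing condition \eqref{sommer} since it agrees with $u_k$ for $|x|>r+1$; the source $\tilde{s}=\Delta w+k^2w$ is in $\text{L}^2$ with compact support, so Propositions \ref{source} and \ref{directh} apply and uniqueness identifies $v$ with $\mathcal{H}_k(\tilde{s})$; and you rightly flag that the naive tensor-product lifting fails and that the genuine inverse trace theorem (Fourier in $x$, frequency-adapted profile in $y$) is needed. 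The trade-off between the two arguments: yours is more modular, reusing Proposition \ref{directh} as a black box and avoiding a second modal computation, but it hides the dependence of $D$ on $k$ inside the lifting constant and the constant of Proposition \ref{directh}; the paper's computation keeps that dependence semi-explicit, ending with $D=d(r)\left((k^2+\mu)\max\left(2r,\tfrac{1}{\delta}\right)\sqrt{\sum_{n\in\N}\tfrac{1}{k_n^2}}+1\right)$, which exhibits the blow-up as $\delta=\min_n\sqrt{|k^2-n^2\pi^2|}\to 0$, in the spirit of Remark \ref{distpi}.
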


\begin{remark}
Combining Propositions \ref{source} and \ref{bord}, we see by linearity that the problem 
\begin{equation}
\left\{ \begin{array}{cl} \Delta u_k +k^2 u_k =-s & \text{ in } \Omega, \\
\partial_\nu u_k =b_1 & \text{ on } \partial \Omega_{\text{top}}, \\
\partial_\nu u_k =b_2 & \text{ on } \partial \Omega_{\text{bot}}, \\
u_k \text{ is outgoing},
 \end{array} \right. \end{equation}
has a unique solution $u_k\in \text{H}^2\loc(\Omega)$. \end{remark}

\subsection{Forward source problem with perturbations \label{secBorn}}

In the following we introduce a theoretical framework for a perturbed Helmholtz problem in a perfect waveguide. Under the Born hypothesis, we prove existence and uniqueness of a solution for the perturbed problem. Then, we provide estimates on the error between the exact solution of the perturbed problem and its Born approximation. 

The perturbed Helmholtz equation takes the form

\begin{equation} \label{perturbed}\left\{ \begin{array}{cl} \Delta w_k +k^2 w_k =-s-\mathcal{S}(w_k) & \text{ in } \Omega, \\ \partial_\nu w_k=b_1+\mathcal{T}_1(w_k) & \text{ on } \partial\Omega_{\text{top}}, \\ \partial_\nu w_k=b_2+\mathcal{T}_2(w_k) & \text{ on } \partial\Omega_{\text{bot}}, \\ w_k \text{ is outgoing}, & \end{array}\right. \end{equation}
where $\mathcal{S},\mathcal{T}_1,\mathcal{T}_2$ are linear operators depending on $w_k$. Moreover, we assume that there exists $r>0$ such that $\text{supp}(\mathcal{S}(w_k))\subset \Omega_r$ and $\text{supp}(\mathcal{T}_1(w_k)),\text{supp}(\mathcal{T}_2(w_k))\subset (-r,r)$ for every $w_k\in \text{H}^2\loc(\Omega)$.

Using the forward Helmholtz source operator $\mathcal{H}_k$ and the forward Helmholtz boundary source operator $\mathcal{G}_k$ defined in \eqref{H} and \eqref{G}, we can rewrite this equation on $\Omega_r$: 
\begin{equation} \label{born2}
w_k=\mathcal{H}_k(s)+\mathcal{G}_k(b_1,b_2)+\mathcal{H}_k(\mathcal{S}(w_k))+\mathcal{G}_k(\mathcal{T}_1(w_k),\mathcal{T}_2(w_k)). \end{equation}

\begin{proposition} \label{borninhomo}
Let $r>0$ such that $\mathcal{S}: \text{H}^2(\Omega_r)\rightarrow \text{L}^2(\Omega_r)$ and $\mathcal{T}_1,\mathcal{T}_2: \text{H}^2(\Omega_r)\rightarrow\widetilde{\text{H}}^{1/2}(-r,r)$. Let $C$ and $D$ be the constants defined in Propositions \ref{directh} and \ref{directg}. Let $s\in \text{L}^2(\Omega_r)$, $b_1,b_2\in \widetilde{\text{H}}^{1/2}(-r,r)$ and assume that 
\begin{equation} \label{hypborn} \begin{split}
\mu:=C \Vert \mathcal{S} \Vert_{\text{H}^2(\Omega_r)\rightarrow \text{L}^2(\Omega_r)}\hspace{7cm}\\+D\left(\Vert \mathcal{T}_1\Vert_{\text{H}^2(\Omega_r)\rightarrow\widetilde{\text{H}}^{1/2}(-r,r)}+\Vert \mathcal{T}_2 \Vert_{\text{H}^2(\Omega_r)\rightarrow\widetilde{\text{H}}^{1/2}(-r,r)}\right)<1.\end{split}
\end{equation}
Then \eqref{born2} has a unique solution $w_k\in \text{H}^2(\Omega_r)$ and 
\begin{equation} \label{serieborn}
w_k=\sum_{m\in \N} \left[\mathcal{H}_k\circ \mathcal{S}+\mathcal{G}_k\circ(\mathcal{T}_1,\mathcal{T}_2)\right]^m\left(\mathcal{H}_k(s)+\mathcal{G}_k(b_1,b_2)\right).
\end{equation}
\end{proposition}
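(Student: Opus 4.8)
The plan is to recognize \eqref{born2} as a fixed-point equation for an affine map on the Banach space $\text{H}^2(\Omega_r)$ whose linear part is a strict contraction, so that both existence--uniqueness and the series \eqref{serieborn} follow from the Banach fixed-point theorem (equivalently, from a Neumann series). Concretely, I would introduce the affine operator
\begin{equation*}
\Phi(w)=\mathcal{H}_k(s)+\mathcal{G}_k(b_1,b_2)+\mathcal{L}(w),\qquad \mathcal{L}:=\mathcal{H}_k\circ\mathcal{S}+\mathcal{G}_k\circ(\mathcal{T}_1,\mathcal{T}_2),
\end{equation*}
and observe that solving \eqref{born2} amounts to finding a fixed point $w_k=\Phi(w_k)$, i.e. $(\mathrm{Id}-\mathcal{L})w_k=\mathcal{H}_k(s)+\mathcal{G}_k(b_1,b_2)$. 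Here the hypotheses on the ranges of $\mathcal{S},\mathcal{T}_1,\mathcal{T}_2$ guarantee that $\Phi$ maps $\text{H}^2(\Omega_r)$ into itself, while the support assumptions $\text{supp}(\mathcal{S}(w_k))\subset\Omega_r$ and $\text{supp}(\mathcal{T}_i(w_k))\subset(-r,r)$ ensure that the restriction to $\Omega_r$ is consistent with the definitions \eqref{H} and \eqref{G}.

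The key step is the contraction estimate for $\mathcal{L}$. For any $w,w'\in\text{H}^2(\Omega_r)$, linearity gives $\Phi(w)-\Phi(w')=\mathcal{L}(w-w')$, and combining the continuity bounds \eqref{contrH} and \eqref{contrG} from Propositions \ref{directh} and \ref{directg} with the definition of the operator norms yields
\begin{align*}
\Vert\mathcal{L}(w-w')\Vert_{\text{H}^2(\Omega_r)}
&\leq C\,\Vert\mathcal{S}(w-w')\Vert_{\text{L}^2(\Omega_r)}+D\sum_{i=1}^{2}\Vert\mathcal{T}_i(w-w')\Vert_{\widetilde{\text{H}}^{1/2}(-r,r)}\\
&\leq\Big(C\Vert\mathcal{S}\Vert+D(\Vert\mathcal{T}_1\Vert+\Vert\mathcal{T}_2\Vert)\Big)\Vert w-w'\Vert_{\text{H}^2(\Omega_r)}=\mu\,\Vert w-w'\Vert_{\text{H}^2(\Omega_r)},
\end{align*}
where the operator norms are those appearing in \eqref{hypborn}. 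By hypothesis \eqref{hypborn} we have $\mu<1$, so $\Phi$ is a strict contraction on the complete space $\text{H}^2(\Omega_r)$.

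With the contraction in hand I would conclude in two equivalent ways. Either invoke the Banach fixed-point theorem directly to obtain a unique $w_k\in\text{H}^2(\Omega_r)$ with $w_k=\Phi(w_k)$; or note that $\Vert\mathcal{L}\Vert_{\text{H}^2(\Omega_r)\rightarrow\text{H}^2(\Omega_r)}\leq\mu<1$ forces $\mathrm{Id}-\mathcal{L}$ to be boundedly invertible through the Neumann series $(\mathrm{Id}-\mathcal{L})^{-1}=\sum_{m\in\N}\mathcal{L}^m$. Applying this inverse to $\mathcal{H}_k(s)+\mathcal{G}_k(b_1,b_2)$ gives exactly \eqref{serieborn}; the same expression is the limit of the Picard iterates $w^{(m+1)}=\Phi(w^{(m)})$ started at $w^{(0)}=0$. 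I do not expect a genuine obstacle here, since the argument is essentially soft once the two forward estimates of Propositions \ref{directh} and \ref{directg} are available. The only point requiring care is the bookkeeping that makes $\Phi$ a well-defined self-map of $\text{H}^2(\Omega_r)$, namely checking that the compositions $\mathcal{H}_k\circ\mathcal{S}$ and $\mathcal{G}_k\circ(\mathcal{T}_1,\mathcal{T}_2)$ land back in $\text{H}^2(\Omega_r)$ and that the smallness quantity controlling the contraction is precisely the $\mu$ defined in \eqref{hypborn}.
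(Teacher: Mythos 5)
Your proof is correct and follows essentially the same route as the paper: the paper's proof simply observes that under \eqref{hypborn} the operator $\mathcal{H}_k\circ \mathcal{S}+\mathcal{G}_k\circ(\mathcal{T}_1,\mathcal{T}_2)$ is a contraction and that \eqref{serieborn} is the resulting Born (Neumann) series. You have merely written out the contraction estimate and the fixed-point/Neumann-series conclusion that the paper leaves implicit, which is entirely consistent with its argument.
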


\begin{proof}
If \eqref{hypborn} is satisfied then $\mathcal{H}_k\circ \mathcal{S}+\mathcal{G}_k\circ(\mathcal{T}_1,\mathcal{T}_2)$ is a contraction, and the expression \eqref{serieborn} is the expansion of $w_k$ into a Born series (see for instance \cite{colton1}).\end{proof}
\begin{remark}
In this work, we only consider perturbations which affect the PDE via a linear operator. However, the above Proposition also extends to non linear operators, assuming they are Lipschitz. \end{remark}

To compute numerically $w_k$, we approximate the Born series by its first term. 

\begin{definition} \label{defiborn}
Let $w_k$ be defined by \eqref{serieborn}. We define $v_k$, the Born approximation of $w_k$ by
\begin{equation}
v_k=\mathcal{H}_k(s)+\mathcal{G}_k(b_1,b_2).
\end{equation}
\end{definition}

\begin{proposition} \label{inegaborn}
Assume that $\mu$ satisfies \eqref{hypborn} as in Proposition \ref{borninhomo}. Let $w_k$ be the solution of \eqref{born2} and $v_k$ its Born approximation. Then
\begin{equation} \Vert w_k-v_k\Vert_{\text{H}^2(\Omega_r)}\leq \left(C\Vert s \Vert_{\text{L}^2(\Omega_r)}+D\left(\Vert b_1\Vert_{\widetilde{\text{H}}^{1/2}(-r,r)}+\Vert b_2\Vert_{\widetilde{\text{H}}^{1/2}(-r,r)}\right) \right)\frac{\mu}{1-\mu}. \end{equation} 
\end{proposition}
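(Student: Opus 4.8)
The plan is to recognize that the difference $w_k - v_k$ is exactly the tail of the Born series \eqref{serieborn} truncated at its first term, so that the estimate reduces to summing a geometric series whose ratio is controlled by $\mu$. To organize this, I would first abbreviate the iteration operator as $A := \mathcal{H}_k\circ \mathcal{S}+\mathcal{G}_k\circ(\mathcal{T}_1,\mathcal{T}_2)$, viewed as a bounded linear operator on $\text{H}^2(\Omega_r)$, and set $f := \mathcal{H}_k(s)+\mathcal{G}_k(b_1,b_2)$. By Definition \ref{defiborn} we have $v_k=f$, and by \eqref{serieborn} we have $w_k=\sum_{m\in\N} A^m f$. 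Subtracting the $m=0$ term then gives the identity $w_k-v_k=\sum_{m\geq 1} A^m f$, which is the object to be estimated.

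The key quantitative step is to bound the operator norm of $A$ on $\text{H}^2(\Omega_r)$ by $\mu$. For $u\in\text{H}^2(\Omega_r)$, Proposition \ref{directh} applied to the source $\mathcal{S}(u)\in\text{L}^2(\Omega_r)$ yields $\Vert\mathcal{H}_k(\mathcal{S}(u))\Vert_{\text{H}^2(\Omega_r)}\leq C\Vert\mathcal{S}(u)\Vert_{\text{L}^2(\Omega_r)}\leq C\Vert\mathcal{S}\Vert_{\text{H}^2(\Omega_r)\rightarrow\text{L}^2(\Omega_r)}\Vert u\Vert_{\text{H}^2(\Omega_r)}$, while Proposition \ref{directg} applied to the boundary data $(\mathcal{T}_1(u),\mathcal{T}_2(u))$ gives the analogous estimate with the factor $D(\Vert\mathcal{T}_1\Vert+\Vert\mathcal{T}_2\Vert)$ (the operator norms from $\text{H}^2(\Omega_r)$ to $\widetilde{\text{H}}^{1/2}(-r,r)$). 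Adding the two contributions and invoking the definition \eqref{hypborn} of $\mu$ gives $\Vert Au\Vert_{\text{H}^2(\Omega_r)}\leq\mu\Vert u\Vert_{\text{H}^2(\Omega_r)}$, that is $\Vert A\Vert_{\text{H}^2(\Omega_r)\rightarrow\text{H}^2(\Omega_r)}\leq\mu<1$. This is precisely the contraction estimate already used in Proposition \ref{borninhomo}, so no new analytic input is needed.

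With this in hand I would conclude by a triangle inequality followed by geometric summation:
\begin{equation*}
\Vert w_k-v_k\Vert_{\text{H}^2(\Omega_r)}\leq \sum_{m\geq 1}\Vert A^m f\Vert_{\text{H}^2(\Omega_r)}\leq \sum_{m\geq 1}\mu^m\Vert f\Vert_{\text{H}^2(\Omega_r)}=\frac{\mu}{1-\mu}\Vert f\Vert_{\text{H}^2(\Omega_r)},
\end{equation*}
the series converging precisely because $\mu<1$. It then remains only to bound the seed term $\Vert f\Vert_{\text{H}^2(\Omega_r)}=\Vert v_k\Vert_{\text{H}^2(\Omega_r)}$; applying Propositions \ref{directh} and \ref{directg} once more, this time to $\mathcal{H}_k(s)$ and $\mathcal{G}_k(b_1,b_2)$ separately, gives $\Vert f\Vert_{\text{H}^2(\Omega_r)}\leq C\Vert s\Vert_{\text{L}^2(\Omega_r)}+D(\Vert b_1\Vert_{\widetilde{\text{H}}^{1/2}(-r,r)}+\Vert b_2\Vert_{\widetilde{\text{H}}^{1/2}(-r,r)})$. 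Combining the two displays yields exactly the claimed inequality. I expect no genuine obstacle here: the only point requiring care is keeping track of which norms the constants $C$ and $D$ attach to, so that the very same two propositions serve both to estimate the operator norm of $A$ (producing the factor $\mu/(1-\mu)$) and to estimate the initial term $v_k$ (producing the prefactor).
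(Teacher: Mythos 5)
Your proof is correct and follows exactly the argument the paper intends: the paper's own proof is just the one-line remark that one uses the definitions of $w_k$ and $v_k$ together with the sum of a geometric series, and your write-up supplies precisely those details (the operator-norm bound $\Vert A\Vert\leq\mu$ via Propositions \ref{directh} and \ref{directg}, the geometric summation of the tail, and the same propositions applied once more to bound the seed term $\mathcal{H}_k(s)+\mathcal{G}_k(b_1,b_2)$). Nothing is missing and no step deviates from the paper's route.
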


\begin{proof}
We use the definitions of $w_k$ and $v_k$ and the sum of geometrical series.
\end{proof}

\begin{remark} 
If $f,g_1$ and $g_2$ are small, we have proved that the solution of \eqref{perturbed} is very close to the solution of 
\begin{equation} \left\{ \begin{array}{cl} \Delta v_k +k^2 v_k =-s & \text{ in } \Omega, \\ \partial_\nu v_k=b_1 & \text{ on } \partial\Omega_{\text{top}}, \\ \partial_\nu v_k=b_2 & \text{ on } \partial\Omega_{\text{bot}}, \\ v_k \text{ is outgoing}, & \end{array}\right. \end{equation}
and we have quantified the error made by approximating $w_k$ by $v_k$.  \end{remark}

\subsection{Inverse source problem in a perfect waveguide}

In this section, we consider the inverse problem of reconstructing a real-valued source $s$. The goal is to determine the location of $s$ form measurements made on the section $\{0\}\times (0,1)$ at every frequency $k>0$. 

\begin{center}
\begin{tikzpicture}
\draw (-3,1) -- (3,1);
\draw [dashed] (-5,1) -- (-3,1);
\draw [dashed] (3,1) -- (5,1);
\draw (-3,0) -- (3,0);
\draw [dashed] (-5,0) -- (-3,0);
\draw [dashed] (3,0) -- (5,0);
\draw [white,fill=gray!40] (0,0.5) circle (0.4);
\draw [->] [domain=0.3:1.3] [samples=200] plot (\x,{0.3*sin(10*\x r)+0.5});
\draw [<-] [domain=-1.7:-0.3] [samples=200] plot (\x,{-0.3*sin(10*\x r)+0.5});
\draw (-0.8,0.2) node{$u_k$};
\draw (0,0.6) node{$s$};
\draw (2.5,0.5) node{$\Omega$};
\draw (-2,0.2) node[regular polygon,regular polygon sides=3, fill=black, scale=0.4]{};
\draw (-2,0.5) node[regular polygon,regular polygon sides=3, fill=black, scale=0.4]{};
\draw (-2,0.8) node[regular polygon,regular polygon sides=3, fill=black, scale=0.4]{};
\draw [red] (-2,-0.2) -- (-2,1.2) node[above]{$x=0$};
\end{tikzpicture}
\end{center}

For every $k>0$ and $0<y<1$, $u_k(0,y)$ is measured. Using Proposition \ref{source}, we know that 
\begin{equation} u_k(0,y)=\sum_{n\in \N} u_{k,n}(0)\varphi_n(y) \quad \text{ where } \quad u_{k,n}(0)= \frac{i}{2k_n}\int_\R s_n(z)e^{ik_n|z|}\dd z, \end{equation}
if the decomposition of $s$ is $s(x,y)=\sum_{n\in \N} s_n(x)\varphi_n(y)$. Since 
\begin{equation} u_{k,n}(0)=\int_\R u_k(0,y)\varphi_n(y) \dd y,\end{equation}
 we can theoretically have access to $u_{k,n}(0)$ for every $n\in \N$. However, in real-life experiments, noise is likely to pollute the response of evanescent mode, so we assume that we only have access to $u_{k,n}(0)$ for every $n\in \N$ such that $n<k/\pi$: 
\begin{equation}u_{k,n}(0)=\frac{i}{2k_n}\int_\R s_n(z)e^{ik_n|z|}\dd z \quad \forall k>0, \, \forall n\in \N, \, n<k/\pi .\end{equation}
We notice that this expression depends on $k_n=\sqrt{k^2-n^2\pi^2}$. Since $(k,n)\mapsto (\omega,n):= (\sqrt{k^2-n^2\pi^2},n)$ is one-to-one from $\{(k,n)\in \R_+^*\times \N, \, n<k/\pi\}$ to $\R_+^*\times \N$, the available data is then 
\begin{equation} \label{cv} d_{\omega,n}:=\frac{i}{2\omega}\int_\R s_n(z)e^{i\omega|z|}\dd z \quad \forall n\in \N,\, \forall \omega\in \R^*_+.\end{equation}
This change of variable means that given a mode $n$ and a value $\omega>0$, there exists a frequency $k>0$ such that $n$ is a propagative mode and $k_n=\omega$. In order to remove the absolute value in the expression of the available data, we assume that $\text{supp}(s)\subset (0,+\infty)\times(0,1)$, \textit{i.e.} that the source is located to the right of the section where the measurements are made.
\begin{definition} \label{defigamma} Let $\text{H}$ be the Hilbert space defined by 
\begin{equation} \text{H}:=\left\{\hat{u}:\R^*_+\rightarrow \C \, \vert \, \int_0^{+\infty} \omega^2|\hat{u}(\omega)|^2\dd \omega <+\infty\right\}, \,\Vert \hat{u}\Vert_\text{H}^2=\int_0^{+\infty} \omega^2|\hat{u}(\omega)|^2\dd k.\end{equation}
We denote by $\Gamma$ the forward modal operator and by $F_{\text{source}}$ the forward source operator for problem~\eqref{directsource}.
Then $\Gamma$ and $F_{\text{source}}$ are defined by
\begin{equation} \label{defgamma} \Gamma : \begin{array}{rcl} \text{L}^2(\R_+) & \rightarrow & \text{H} \\ f & \mapsto & \left(\omega\mapsto\displaystyle \frac{i}{2\omega}\int_0^{+\infty} f(z) e^{i\omega z}\dd z \right) \end{array}, \end{equation}
\begin{equation} F_{\text{source}}: \begin{array}{rcl} \text{L}^2(\Omega) & \rightarrow & \ell^2(\text{H}) \\ s & \mapsto & \left(\Gamma(s_n)\right)_{n\in \N} \end{array},\end{equation}
if the decomposition of $s$ is $s(x,y)=\displaystyle\sum_{n\in \N} s_n(x)\varphi_n(y)$.
\end{definition}
We choose the following definition for the Fourier transform: 
$$\F(f)(\omega)=\int_\R f(z)e^{-i\omega z}\dd z. $$
Since $s$ is real-valued, $\Gamma$ is related to the Fourier transform:
\begin{equation*}\F(f)(\omega)=\left\{\begin{array}{cl} \overline{\frac{2\omega}{i} \Gamma(f)(\omega)} & \text{ if } \omega>0 \\ \frac{-2\omega}{i}\Gamma(f)(-\omega) & \text{ if } \omega<0 \end{array} \right. .\end{equation*}
Using the properties of the Fourier transform, we can prove the following Proposition:
\begin{proposition} \label{invgamma} The forward modal operator $\Gamma$ and the forward source operator $F_{\text{source}}$ satisfy the relations 
\begin{equation} \Vert \Gamma(f)\Vert^2_{\text{H}}=\frac{\pi}{4} \Vert f \Vert^2_{\text{L}^2(\R_+)} \quad \forall f\in \text{L}^2(\R_+), \end{equation}
\begin{equation}\Vert F_{\text{source}}(s) \Vert^2_{\ell^2(\text{H})}=\frac{\pi}{4}\Vert s\Vert^2_{\text{L}^2(\Omega)} \quad \forall s\in \text{L}^2(\Omega),
\end{equation}
and their inverse operators are given by 
\begin{equation}\Gamma^{-1}: \begin{array}{rcl} \text{H} & \rightarrow & \text{L}^2(\R) \\ v & \mapsto & \left(x\mapsto \displaystyle\frac{i}{\pi }\displaystyle\int_{0}^{+\infty}\omega\overline{v(\omega)}e^{i\omega x}\dd \omega +\frac{i}{\pi }\int_{-\infty}^0\omega v(-\omega)e^{i\omega x }\dd \omega \right)\end{array},\end{equation}
\begin{equation}F_{\text{source}}^{-1} : \begin{array}{rcl} \ell^2(\text{H}) & \rightarrow & \text{L}^2(\Omega) \\ (v_n)_{n\in \N} & \mapsto & \left((x,y)\mapsto \displaystyle\sum_{n\in \N} \Gamma^{-1}(v_n)(x)\varphi_n(y) \right) \end{array} . \end{equation}
\end{proposition}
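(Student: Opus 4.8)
The plan is to reduce every claim to the classical Plancherel theorem and the Fourier inversion formula, exploiting the explicit algebraic identity relating $\Gamma$ to $\F$ recorded just above the statement. Throughout, I would regard $f\in\text{L}^2(\R_+)$ as an element of $\text{L}^2(\R)$ extended by zero on $(-\infty,0)$, so that $\F(f)$ is well defined and $\Vert f\Vert_{\text{L}^2(\R_+)}=\Vert f\Vert_{\text{L}^2(\R)}$.

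First I would establish the isometry for $\Gamma$. Writing $v=\Gamma(f)$ and squaring the stated relation between $\F(f)$ and $\Gamma(f)$, one gets $|\F(f)(\omega)|^2=4\omega^2|v(\omega)|^2$ for $\omega>0$ and $|\F(f)(\omega)|^2=4\omega^2|v(-\omega)|^2$ for $\omega<0$. Splitting $\int_\R|\F(f)|^2$ at the origin and performing the change of variable $\omega\mapsto-\omega$ on the negative half-line collapses both contributions into $8\int_0^{+\infty}\omega^2|v(\omega)|^2\dd\omega=8\Vert\Gamma(f)\Vert_\text{H}^2$. Plancherel's theorem for our convention, $\int_\R|\F(f)|^2\dd\omega=2\pi\int_\R|f|^2\dd z$, then yields $\Vert\Gamma(f)\Vert_\text{H}^2=\frac{\pi}{4}\Vert f\Vert_{\text{L}^2(\R_+)}^2$. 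The identity for $F_{\text{source}}$ is then immediate: since $(\varphi_n)_{n\in\N}$ is orthonormal in $\text{L}^2(0,1)$, Parseval gives $\Vert s\Vert_{\text{L}^2(\Omega)}^2=\sum_n\Vert s_n\Vert_{\text{L}^2(\R_+)}^2$, and summing the scalar identity over $n$ produces $\Vert F_{\text{source}}(s)\Vert_{\ell^2(\text{H})}^2=\frac{\pi}{4}\Vert s\Vert_{\text{L}^2(\Omega)}^2$.

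Next I would derive the inversion formula. Solving the same algebraic relation for $\F(f)$ gives $\F(f)(\omega)=2i\omega\overline{v(\omega)}$ for $\omega>0$ and $\F(f)(\omega)=2i\omega\,v(-\omega)$ for $\omega<0$. Substituting these into the Fourier inversion formula $f(x)=\frac{1}{2\pi}\int_\R\F(f)(\omega)e^{i\omega x}\dd\omega$ and splitting the integral at the origin produces exactly the claimed expression for $\Gamma^{-1}(v)$; one can moreover check, via the substitution $\omega\mapsto-\omega$, that the two terms are complex conjugates, so the output is real-valued, consistent with $s$ being real. The formula for $F_{\text{source}}^{-1}$ then follows by applying $\Gamma^{-1}$ coefficient-wise and re-expanding on the basis $(\varphi_n)_n$, using that $F_{\text{source}}$ acts diagonally on the modal coefficients.

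The individual steps are routine; the one point deserving care is the bookkeeping of the constant $\pi/4$ and of the conjugations, since the chosen Fourier convention, the weight $\omega^2$ in the norm of $\text{H}$, and the reality of $f$ all feed into that factor. A secondary subtlety worth flagging is that $\Gamma$ is not onto $\text{H}$ (its range is a Paley--Wiener/Hardy-type subspace of functions whose inverse transform is supported on $\R_+$), so the displayed $\Gamma^{-1}$ should be read as the inverse on the range of $\Gamma$; verifying $\Gamma^{-1}\circ\Gamma=\mathrm{id}$ on $\text{L}^2(\R_+)$ is precisely the Fourier inversion computation above, which is all that the reconstruction requires.
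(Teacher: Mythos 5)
Your proof is correct and follows essentially the same route the paper intends: it fleshes out the paper's one-line justification (``using the properties of the Fourier transform'') by combining the stated conjugation relation between $\Gamma$ and $\F$ with Plancherel's theorem and the Fourier inversion formula, with the right bookkeeping of the factor $\pi/4$. Your two flagged subtleties are both apt: the identities do rely on $f$ being real-valued (as the paper's relation between $\F(f)$ and $\Gamma(f)$ presupposes), and $\Gamma^{-1}$ is indeed only a left inverse on the range of $\Gamma$, a point the paper glosses over.
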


We can use the same framework for problem \eqref{directbord} when the source therme is a boundary term. In this case, the measured data is
\begin{equation}u_{k,n}(0)=\frac{i}{2k_n}\int_\R (-b_1(z)\varphi_n(1)+b_2(z)\varphi_n(0))e^{ik_n|z|}\dd z \quad \forall n\in \N,\end{equation}
As $(k,n)\mapsto (\sqrt{k^2-n^2\pi^2},n)$ is one-to-one from $\{(k,n)\in \R_+^*\times \N, \, n<k/\pi\}$ to $\R_+^*\times \N$, we assume that the available data is 
\begin{equation}
d_{\omega,1}=\frac{i}{2\omega}\int_\R(b_1(z)+b_2(z))e^{i\omega|z|}\dd z \qquad \forall \omega\in \R^*_+, \end{equation}
\begin{equation} d_{\omega,2}=\frac{i}{2\omega}\int_\R(-\sqrt{2}b_1(z)+\sqrt{2}b_2(z))e^{i\omega|z|}\dd z \qquad \forall \omega\in \R^*_+. 
\end{equation}
Again, we assume that $\text{supp}(b_1),\text{supp}(b_2)\subset (0,+\infty)$ and with the help of Proposition \ref{bord}, we define the forward operator.

\begin{definition} The forward Helmholtz boundary source operator $F_{\text{bound}}$ for the problem~\eqref{directbord} is defined by
\begin{equation} F_{\text{bound}}: \begin{array}{rcl} \left(\text{H}^{1/2}(\R_+)\right)^2 & \rightarrow & \text{H}\times \text{H}\\ (b_1,b_2) & \mapsto & \left(\begin{array}{c}\omega\mapsto \displaystyle\frac{i}{2\omega}\displaystyle\int_0^{+\infty}(b_1(z)+b_2(z))e^{i\omega z}\dd z \\ \omega\mapsto \displaystyle\frac{i}{\sqrt{2}\omega}\displaystyle\int_0^{+\infty}(b_2(z)-b_1(z))e^{i\omega z}\dd z\end{array}\right)\end{array}. \end{equation}
\end{definition}

This operator is invertible: 
\begin{proposition} \label{fbound} The forward Helmholtz boundary source operator $F_{\text{bound}}$ is invertible: 
\begin{equation} F_{\text{bound}}^{-1} : \begin{array}{rcl} \text{H}\times \text{H} & \rightarrow & \left(\text{H}^{1/2}(\R_+)\right)^2 \\ (v_1,v_2) & \mapsto & \left( \Gamma^{-1}\left(\displaystyle\frac{\sqrt{2}v_1-v_2}{2\sqrt{2}}\right),\Gamma^{-1}\left(\displaystyle\frac{\sqrt{2}v_1+v_2}{2\sqrt{2}}\right) \right) \end{array} . \end{equation}
\end{proposition}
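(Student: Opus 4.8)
The plan is to reduce the inversion of $F_{\text{bound}}$ entirely to the already-established invertibility of the scalar modal operator $\Gamma$. First I would observe, directly from Definition~\ref{defgamma} and the definition of $F_{\text{bound}}$, that each component of $F_{\text{bound}}(b_1,b_2)$ is simply $\Gamma$ applied to a linear combination of $b_1$ and $b_2$. Indeed, since $\Gamma(f)(\omega)=\frac{i}{2\omega}\int_0^{+\infty}f(z)e^{i\omega z}\,\dd z$ depends linearly on $f$, writing $(v_1,v_2)=F_{\text{bound}}(b_1,b_2)$ gives
\begin{equation*}
v_1=\Gamma(b_1+b_2), \qquad v_2=\sqrt{2}\,\Gamma(b_2-b_1),
\end{equation*}
where the factor $\sqrt 2$ comes from comparing the prefactor $\frac{i}{\sqrt2\,\omega}$ in the second component of $F_{\text{bound}}$ with $\frac{i}{2\omega}$ in $\Gamma$.

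Next, I would invert these two scalar relations using Proposition~\ref{invgamma}, which guarantees that $\Gamma$ is a bijection with explicit inverse $\Gamma^{-1}$. Applying $\Gamma^{-1}$ to each relation and using its linearity yields the linear system
\begin{equation*}
b_1+b_2=\Gamma^{-1}(v_1), \qquad b_2-b_1=\tfrac{1}{\sqrt{2}}\,\Gamma^{-1}(v_2).
\end{equation*}
Solving this $2\times 2$ system by taking half the sum and half the difference, then pulling the scalar combinations back inside $\Gamma^{-1}$ by linearity and rewriting $\tfrac12=\tfrac{\sqrt2}{2\sqrt2}$ to match denominators, I would obtain
\begin{equation*}
b_1=\Gamma^{-1}\!\left(\frac{\sqrt2\,v_1-v_2}{2\sqrt2}\right), \qquad
b_2=\Gamma^{-1}\!\left(\frac{\sqrt2\,v_1+v_2}{2\sqrt2}\right),
\end{equation*}
which is exactly the claimed expression for $F_{\text{bound}}^{-1}$. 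This simultaneously shows that $F_{\text{bound}}$ is injective, since $(v_1,v_2)$ determine $(b_1,b_2)$ uniquely, and that it is surjective, since the formula exhibits a preimage for every admissible pair $(v_1,v_2)$; hence $F_{\text{bound}}$ is invertible with the stated inverse.

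The computation is elementary, and essentially the whole content of the statement is this algebraic inversion of a $2\times 2$ system once one recognizes the two components as $\Gamma(b_1+b_2)$ and $\sqrt2\,\Gamma(b_2-b_1)$. There is no substantial analytic obstacle, since the invertibility and the relevant mapping properties of $\Gamma$ and $\Gamma^{-1}$ have already been secured in Proposition~\ref{invgamma}. The only points requiring genuine care — and the closest thing to a difficulty — are the bookkeeping of the numerical factors ($\sqrt 2$ versus $2$, and the $\tfrac{1}{\sqrt2}$ arising from the $v_2$ component) and the verification that the linear combinations of $v_1,v_2$ may legitimately be moved inside $\Gamma^{-1}$, both of which follow at once from the linearity of $\Gamma^{-1}$.
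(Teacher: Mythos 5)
Your proof is correct and coincides with the paper's (implicit) argument: the paper states Proposition \ref{fbound} without any proof, as an immediate algebraic consequence of Proposition \ref{invgamma}, and your computation --- identifying the two components as $\Gamma(b_1+b_2)$ and $\sqrt{2}\,\Gamma(b_2-b_1)$, applying $\Gamma^{-1}$, and solving the resulting $2\times 2$ system --- is exactly that verification. The only point to keep in mind is that $\Gamma^{-1}$, whose explicit formula in Proposition \ref{invgamma} involves a complex conjugate, is only $\R$-linear rather than $\C$-linear; since all the coefficients you move through $\Gamma^{-1}$ (namely $\sqrt{2}$, $1/2$, $1/(2\sqrt{2})$) are real, this does not affect your argument.
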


Propositions \ref{invgamma} and \ref{fbound} show that the measurements of the wave on a section of the waveguide for every frequency $k>0$ are sufficient to reconstruct the source. Thus, the inverse operators can be computed explicitly and in a stable way. However, it is unrealistic to measure $u_k$ for every frequency $k>0$ in practice. We address this issue of limited data in the next subsection.

\subsection{Inverse source problem from limited frequency data}

In this section, we assume that the frequency data are only known in a given interval. To reconstruct every $s_n$ in \eqref{directsource}, we need to find a way to reconstruct a function $f$ knowing only the values of its Fourier transform on a given interval. This problem is called Fourier synthesis, and has been studied in \cite{isaev1} for instance. If the given interval has the form $(0,\omega_1)$, some regularity on the function is sufficient to provide a good reconstruction of $f$ and to control the approximation error (see \cite{dym1}). On the other hand, we have to deal in the next section with intervals of the form $(\omega_0,+\infty)$. This case is harder, and it seems difficult to get a good reconstruction of the function $f$. However, if the function $f$ is compactly supported, its Fourier transform is analytic. Thus, the values of $\Gamma(f)(\omega)$ for $\omega$ in a interval $(\omega_0, \omega_1)$ completely determine $\Gamma(f)(\omega)$ for $\omega$ in $(0,+\infty)$. In the following, we address the issue of the stability of this reconstruction. 

We start with a lemma to control the $\text{L}^2$ norm on $(0,\omega_0)$ of an analytic function in therms of its values on $(\omega_0,\omega_0+\sigma)$ where $\omega_0$ and $\sigma$ are positive real numbers. 

\begin{lemma} \label{lemma1}
Let $f$ be a function in $\mathcal{C}^\infty(\R_+) \cap\text{L}^2(\R_+)$ and assume that for every $j\in \N$ and $\omega\in \R_+$, $|f^{(j)}(\omega)|\leq c\frac{r^j}{j^\alpha}\Vert f\Vert_{L^2(\R_+)}$ where $r,\alpha,c \in \R^*_+$. Let $\omega_0,\sigma\in \R^*_+$ and $\eps\in (0,1)$. There exists a constant $\xi$, depending only on $\omega_0,\sigma,r,\alpha,c,\eps$, such that
\begin{equation} \label{majolem} \frac{\Vert f \Vert_{\text{L}^2(0,\omega_0)}}{\Vert f\Vert_{\text{L}^2(\R^+)}}\leq \xi\left(\frac{\Vert f\Vert_{\text{L}^2(\omega_0,\omega_0+\sigma)}}{\Vert f \Vert_{\text{L}^2(\R^+)}}\right)^{1-\eps}.\end{equation}
\end{lemma}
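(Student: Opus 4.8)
The plan is to prove \eqref{majolem} by \emph{quantitative analytic continuation}: the hypothesis on the derivatives makes $f$ rigid enough that its values on $(\omega_0,\omega_0+\sigma)$ determine, in a stable way, its values on $(0,\omega_0)$. First I would normalize $\|f\|_{\text{L}^2(\R^+)}=1$, so that the assumption reads $|f^{(j)}(\omega)|\le c\,r^j/j^\alpha$ for $\omega\ge0$ and $j\ge1$, together with $|f(\omega)|\le c$. Writing a Taylor series at any point shows that $f$ extends to an entire function of exponential type $r$ (the factor $1/j^\alpha$ being harmless), but I will only use the real derivative bounds. Set $\delta:=\|f\|_{\text{L}^2(\omega_0,\omega_0+\sigma)}\le1$ and $I:=(\omega_0,\omega_0+\sigma)$; since $\|f\|_{\text{L}^2(0,\omega_0)}\le\sqrt{\omega_0}\,\|f\|_{\text{L}^\infty(0,\omega_0)}$, it suffices to bound $\|f\|_{\text{L}^\infty(0,\omega_0)}$ by $\xi\,\delta^{1-\eps}$.

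The two ingredients are a \emph{local smallness} estimate on $I$ and an \emph{extrapolation} from $I$ to $(0,\omega_0)$. For the first, a Gagliardo--Nirenberg / Landau--Kolmogorov interpolation inequality on the interval $I$, combined with the uniform bounds $\|f^{(p)}\|_{\text{L}^\infty(I)}\le c\,r^p/p^\alpha$, yields for every integer $p$ and every $j<p$ an estimate of the form $\|f^{(j)}\|_{\text{L}^\infty(I)}\le C_p\,\delta^{(1-j/p)(1-\eta_p)}$, where $\eta_p\to0$ as $p\to\infty$ (the loss $\eta_p$ comes from converting $\text{L}^2(I)$-smallness into $\text{L}^\infty(I)$-smallness and degrades only the constant). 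In particular all derivatives of $f$ at the endpoint $\omega_0$ are controlled by powers of $\delta$.

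For the extrapolation I would expand $f$ at $\omega_0$: for $x\in(0,\omega_0)$,
\[
f(x)=\sum_{j=0}^{N-1}\frac{f^{(j)}(\omega_0)}{j!}(x-\omega_0)^j+R_N(x),\qquad |R_N(x)|\le\frac{c\,(r\omega_0)^N}{N^\alpha\,N!}.
\]
Here the remainder is controlled purely by the Gevrey-type hypothesis and tends to $0$ faster than any geometric rate, while each term of the polynomial part is bounded, via the previous step with $p=N$, by $C_N\,\delta^{(1-j/N)(1-\eta_N)}\,\omega_0^{\,j}/j!$. The dominant index in the sum is $j\approx r\omega_0$, so the polynomial part carries a $\delta$-exponent of order $(1-\eta_N)(1-r\omega_0/N)$, which tends to $1$ as $N\to\infty$.

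The delicate point -- and the step I expect to be the main obstacle -- is the simultaneous balancing. One must choose the truncation order $N=N(\delta)\to\infty$ as $\delta\to0$ so that, on the one hand, the $\delta$-independent remainder $c(r\omega_0)^N/(N^\alpha N!)$ is pushed below the target $\delta^{1-\eps}$ and, on the other hand, the extrapolated exponent $(1-\eta_N)(1-r\omega_0/N)$ stays above $1-\eps$; the combinatorial factors $\omega_0^{\,j}/j!$, the factor $e^{r\omega_0}$ and the interpolation constants $C_N$ must all be tracked, and it is precisely their accumulation that forces $\xi\to+\infty$ as $\eps\to0$. Once $\|f\|_{\text{L}^\infty(0,\omega_0)}\le\xi_0\,\delta^{1-\eps}$ is obtained, multiplying by $\sqrt{\omega_0}$ and undoing the normalization gives \eqref{majolem}. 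I note that the same conclusion can be reached by a complex-analytic route -- a two-constants (Hadamard three-circles) inequality applied to the entire extension, with the exponential-type bound playing the role of the large constant -- but controlling the H\"older exponent uniformly on all of $(0,\omega_0)$ and letting it approach $1$ is exactly where that approach runs into the same optimization difficulty.
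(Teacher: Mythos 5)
Your plan has the same skeleton as the paper's proof: Taylor-expand $f$ at $\omega_0$, control the remainder by the Gevrey-type bound, transfer smallness on $I=(\omega_0,\omega_0+\sigma)$ to the polynomial part, and optimize the truncation order $N\sim\eps|\ln\delta|$. The gap is your first step, and it is not a routine verification: everything hinges on how fast the constants $C_p$ in your claimed bound $\Vert f^{(j)}\Vert_{\text{L}^\infty(I)}\le C_p\,\delta^{(1-j/p)(1-\eta_p)}$ grow with $p$, and you neither establish nor even state this growth. Inserting such bounds into the Taylor sum, the $j$-th term is of size $C_p\,\delta^{1-(j+1/2)/p}(r\omega_0)^j/j!$, so the polynomial part is at most $C_p\,\delta^{1-1/(2p)}\exp\left(r\omega_0\,\delta^{-1/p}\right)$; with $p\approx\beta|\ln\delta|$ the exponential factor is a harmless constant, and one needs $C_p\le\delta^{-\eps}$, which holds when $C_p\le A^p$ (take $\beta$ small) but for essentially nothing worse. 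The obvious tools for your step do not deliver geometric constants: extracting endpoint derivative bounds from $\text{L}^2(I)$-smallness of a function that is (up to a tiny remainder) a polynomial of degree $p$ passes through Markov-type inequalities, and naive iteration gives $\Vert P^{(j)}\Vert_\infty\le p^{2j}\Vert P\Vert_\infty$; dividing by $j!$ and summing against $\omega_0^j$ then produces a factor of order $\exp(c\,p^2)$, not $A^p$. Since your remainder bound $c(r\omega_0)^N/(N^\alpha N!)\le\delta^{1-\eps}$ forces $N$ at least of order $|\ln\delta|/\ln|\ln\delta|$, a factor $\exp(cN^2)$ swamps every power of $\delta$, and no choice of $N(\delta)$ can reconcile the two requirements. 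So with generic interpolation constants the scheme provably fails; with sharp ones, proving them is exactly the missing content.

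That missing estimate is precisely what the paper supplies, and it is the real substance of the lemma: the extension operator $I_n$ acting on polynomials of degree $n$, from $\text{L}^2(\omega_0,\omega_0+\sigma)$ to $\text{L}^2(0,\omega_0)$, has norm at most $C_1C_2^n/n^{1/4}$ --- geometric in $n$ --- which the paper obtains from Todd's bound $\text{cond}_2(H_n)\le c_H(1+\sqrt2)^{4n}/\sqrt{n}$ for the Hilbert matrix. (Equivalently, one could use the sharp Chebyshev--Markov constants $\Vert P^{(j)}\Vert_\infty\le\frac{n^2(n^2-1)\cdots(n^2-(j-1)^2)}{(2j-1)!!}\Vert P\Vert_\infty$, whose factorial denominator restores geometric growth after resummation of the Taylor series.) If you replace your Landau--Kolmogorov step by such a polynomial inequality applied to the Taylor polynomial as a whole --- rather than trying to bound individual derivatives of $f$ with constants you cannot name --- your argument closes and essentially reproduces the paper's proof. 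As written, the proposal correctly identifies the obstacle, and even predicts that it is the main one, but it does not overcome it.
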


\begin{proof} Let $n\in \N$, we define $\delta_j= c\frac{r^j}{j^\alpha}$ and write the Taylor expansion of $f$ at $\omega_0$ up to order $n$:
\begin{equation*}f(\omega)=\sum_{j=0}^n\frac{(\omega-\omega_0)^j}{j!}f^{(j)}(\omega_0)+R_n(\omega), \end{equation*}
with
\begin{equation*} |R_n(\omega)|\leq \frac{\delta_{n+1}\Vert f\Vert_{L^2(\R_+)}|\omega-\omega_0|^{n+1}}{(n+1)!}.\end{equation*}
We denote by $P_n\in \R_n[X]$ the Taylor polynomial associated with this expansion:  
\begin{equation*}P_n=\sum_{j=0}^n a_j (X-\omega_0)^j := \sum_{j=0}^n\frac{f^{(j)}(\omega_0)}{j!}(X-\omega_0)^j,\end{equation*}
and the operator
\begin{equation*}I_n: \begin{array}{ccc}
\R_n[X]\cap \text{L}^2(\omega_0,\omega_0+\sigma) & \rightarrow & \R_n[X]\cap \text{L}^2(0,\omega_0) \\
P & \mapsto & P
\end{array} , \end{equation*}
endowed with the norm 
\begin{equation*}\Vert I_n\Vert:=\sup_{P\in \R_n[X]}\frac{\Vert P\Vert_{\text{L}^2(0,\omega_0)}}{\Vert P \Vert_{\text{L}^2(\omega_0,\omega_0+\sigma)}}.\end{equation*}
We immediately see that 
\begin{alignat}{2} \Vert f \Vert_{\text{L}^2(0,\omega_0)}&\leq \Vert f-P_n\Vert_{\text{L}^2(0,\omega_0)}+\Vert P_n\Vert_{\text{L}^2(0,\omega_0)} \notag \\  &\leq \Vert R_n\Vert_{\text{L}^2(0,\omega_0)}+ \Vert I_n\Vert \Vert P_n\Vert_{\text{L}^2(\omega_0,\omega_0+\sigma)}\notag \\ &\leq \Vert R_n\Vert_{\text{L}^2(0,\omega_0)}+ \Vert I_n\Vert \Vert R_n\Vert_{\text{L}^2(\omega_0,\omega_0+\sigma)}+\Vert I_n\Vert\Vert f\Vert_{\text{L}^2(\omega_0,\omega_0+\sigma)} \label{i_n}.\end{alignat}
Let us compute $\Vert I_n \Vert$. Let $P=\sum_{j=0}^n a_k(X-\omega_0)^j$ be a polynomial in $\R_n[X]$, then 
\begin{align*} \Vert P\Vert_{\text{L}^2(0,\omega_0)}^2&=\int_{0}^{\omega_0} \sum_{k,p=0}^na_ka_p(\omega-\omega_0)^{k+p}\dd \omega\\ & =\sum_{k,p=0}^n a_ka_p\frac{-(-\omega_0)^{k+p+1}}{k+p+1}=\omega_0W^TH_nW,\end{align*}
where $W:=\left(a_k(-\omega_0)^k\right)_{k=0,\cdots,n}$ and $H_n=\left(\frac{1}{k+p+1}\right)_{p,k=0,\cdots,n}$ is the Hilbert matrix. In the same way, 
\begin{align*}\Vert P\Vert_{\text{L}^2(\omega_0,\omega_0+\sigma)}^2&=\int_{\omega_0}^{\omega_0+\sigma} \sum_{k,p=0}^na_ka_p(\omega-\omega_0)^{k+p}\dd \omega\\ &=\sum_{k,p=0}^n a_ka_p\frac{\sigma^{k+p+1}}{k+p+1}=\sigma V^TH_nV.\end{align*}
where $V:=\left(a_k\sigma^k\right)_{k=0,\cdots,n}$.
Let $\lambda_{\text{min}}$ and $\lambda_{\text{max}}$ be the lowest and greatest eigenvalues of $H_n$. It follows that 
\begin{equation}\label{notopti} \omega_0W^TH_n W\leq \omega_0\Vert W\Vert_2^2\lambda_{\text{max}}, \qquad \sigma V^TH_n V\geq \sigma \Vert V\Vert_2^2 \lambda_{\text{min}}.\end{equation}
Notice that 
\begin{equation*}\Vert W\Vert_2^2\leq \max(1,\omega_0)^{2n}\sum_{k=0}^n |a_k|^2\leq \frac{\max(1,\omega_0)^{2n}}{\min(1,\sigma)^{2n}}\sum_{k=0}^n|a_k|^2\sigma^{2k}\leq \frac{\max(1,\omega_0)^{2n}}{\min(1,\sigma)^{2n}}\Vert V \Vert_2^2.\end{equation*}
Thus, if $\omega_0\leq \sigma$, then $\Vert W\Vert_2^2\leq \Vert V\Vert_2^2$. We follow \cite{todd1} to estimate the condition number of the Hilbert matrix: There exists $c_H>0$ such that $\text{cond}_2(H_n)$ for the euclidean norm satisfies
\begin{equation*}\frac{\lambda_{\text{max}}}{\lambda_{\text{min}}}=\text{cond}_2(H_n)\leq c_H\frac{(1+\sqrt{2})^{4n}}{\sqrt{n}}.\end{equation*}
We conclude that
\begin{equation*}\Vert P\Vert_{\text{L}^2(0,\omega_0)}^2\leq \frac{\omega_0}{\sigma }\left(1_{\omega_0\leq \sigma}+1_{\omega_0>\sigma}\frac{\max(1,\omega_0)^{2n}}{\min(1,\sigma)^{2n}}\right)c_H\frac{(1+\sqrt{2})^{4n}}{\sqrt{n}} \Vert P\Vert_{\text{L}^2(\omega_0,\omega_0+\sigma)}^2.\end{equation*}
We define $C_1:=\sqrt{\frac{c_H\omega_0}{\sigma}}$ and $C_2:=\left(1+\sqrt{2}\right)^2\left(1_{\omega_0\leq\sigma}+1_{\omega_0>\sigma}\frac{\max(1,\omega_0)}{\min(1,\sigma)}\right)$, then 
\begin{equation} \Vert I_n \Vert \leq C_1\frac{C_2^n}{n^{1/4}}.\end{equation}
We next bound $R_n$ in $\text{L}^2(0,\omega_0)$ by 
\begin{align*}\Vert R_n\Vert_{\text{L}^2(0,\omega_0)}&\leq\frac{\delta_{n+1}\Vert f\Vert_{\text{L}^2(\R_+)}}{(n+1)!}\left(\int_{0}^{\omega_0}(\omega_0-\omega)^{2n+2}\dd \omega\right)^{1/2}\\ &\leq \frac{\delta_{n+1}\Vert f\Vert_{\text{L}^2(\R_+)}}{(n+1)!}\left(\frac{\omega_0^{2n+3}}{2n+3}\right)^{1/2},\end{align*}
and in $\text{L}^2(\omega_0,\omega_0+\sigma)$ by
\begin{align*}\Vert R_n\Vert_{\text{L}^2(\omega_0,\omega_0+\sigma)}&\leq\frac{\delta_{n+1}\Vert f\Vert_{\text{L}^2(\R_+)}}{(n+1)!}\left(\int_{\omega_0}^{\omega_0+\sigma}(\omega-\omega_0)^{2n+2}\dd \omega\right)^{1/2}\\ &\leq\frac{\delta_{n+1}\Vert f\Vert_{\text{L}^2(\R_+)}}{(n+1)!}\left(\frac{\sigma^{2n+3}}{2n+3}\right)^{1/2}.\end{align*}
Substituting in \eqref{i_n} we find
\begin{equation}\label{exp1}\begin{split}\Vert f \Vert_{\text{L}^2(0,\omega_0)}\leq\frac{\delta_{n+1}\Vert f\Vert_{\text{L}^2(\R_+)}}{(n+1)!}\frac{\omega_0^{n+3/2}}{\sqrt{2n+3}}+\frac{\delta_{n+1}\Vert f\Vert_{\text{L}^2(\R_+)}}{(n+1)!}\frac{\sigma^{n+3/2}}{\sqrt{2n+3}}\frac{C_1C_2^n}{n^{1/4}}\\+\frac{C_1C_2^n}{n^{1/4}}\Vert f\Vert_{\text{L}^2(\omega_0,\omega_0+\sigma)}.\end{split}\end{equation}
To simplify the notations, we define
\begin{equation*}C_3:=\frac{2}{\sqrt{2}}\max\left(\omega_0^{1/2},C_1\sigma^{1/2}\right)=\max\left(1,\sqrt{c_H}\right)\sqrt{2\omega_0},\quad C_4:=\max(\omega_0,\sigma C_2).\end{equation*}
We notice that
\begin{equation*}C_4=\max\left[\omega_0,\left(1+\sqrt{2}\right)^2\left(1_{\omega_0\leq\sigma}\sigma+1_{\omega_0>\sigma}\frac{\sigma\max(1,\omega_0)}{\min(1,\sigma)}\right)\right]=\sigma c_2. \end{equation*}
The expression \eqref{exp1} can be simplified and 
\begin{equation*}\frac{\Vert f \Vert_{\text{L}^2(0,\omega_0)}}{\Vert f\Vert_{\text{L}^2(0,+\infty)}}\leq C_3\frac{\delta_{n+1}C_4^{n+1}}{(n+1)!\sqrt{n+1}}+C_1C_2^n\frac{\Vert f\Vert_{\text{L}^2(\omega_0,\omega_0+\sigma)}}{\Vert f \Vert_{\text{L}^2(0,+\infty)}} \qquad \forall n\in \N.\end{equation*}
The first term does not depend on $f$, and this expression shows that it is impossible to obtain a Lipschitz estimate. To optimize this estimate, we play on the degree $n$ of the polynomials. Indeed, the first therm on the right hand side may be large for small values of $n$, while the second therm blows up when $n$ is large. We set $Q:=\Vert f\Vert_{\text{L}^2(\omega_0,\omega_0+\sigma)}/\Vert f \Vert_{\text{L}^2(\R^+)}$ and for $\eps\in (0,1)$ we choose the integer \begin{equation*}n=\left\lfloor-\frac{\eps}{\ln(C_2)} \ln(Q)+\frac{\ln(C_5)}{\ln(C_2)}\right\rfloor,\end{equation*}
where $C_5>0$ is a constant to be determined later, and $\lfloor\, \rfloor$ is the floor function. Invoking the Stirling formula $n!\geq \sqrt{2\pi n}\frac{n^n}{e^n}$ and the fact that $\delta_{n+1}=c\frac{r^{n+1}}{(n+1)^\alpha}$, we obtain 
\begin{equation*} C_3\frac{\delta_{n+1}C_4^{n+1}}{(n+1)!\sqrt{n+1}}\leq \frac{C_3c(erC_4)^{n+1}}{(n+1)^{\alpha+1}\sqrt{2\pi}(n+1)^{n+1}}=\frac{C_3c}{\sqrt{2\pi}}\frac{(erC_4)^{n+1}}{(n+1)^{n+\alpha+2}}.\end{equation*}
To simplify the notations, we define
\begin{equation*}\gamma:= erC_4, \quad A:=\frac{\ln(C_5)}{\ln(C_2)}, \quad B:=\frac{\eps}{\ln(C_2)}.\end{equation*}
Using the fact that $A-B\ln(Q)\leq n+1\leq A-B\ln(Q)+1$, we see that 
\begin{equation*}\begin{split}\frac{(ert)^{n+1}}{(n+1)^{n+\alpha+2}}&\leq \exp[ (A-B\ln(Q)+1)\ln(\gamma)\hspace{3cm}\\ &\hspace{3cm} -(A-B\ln(Q)+\alpha+1)\ln(A-B\ln(Q))] \\
&=\gamma^{A+1}Q^{-B\ln(\gamma)+B\ln(A-B\ln(Q))}(A-B\ln(Q))^{-(A+\alpha+1)}.\end{split} \end{equation*}
The exponent of $Q$ is greater that $1-\eps$ provided
\begin{equation*}Q\leq \exp\left(-\frac{1}{B}\left[\exp\left(\frac{1-\eps}{B}+\ln(\gamma)\right)-A\right]\right).\end{equation*}
Since $Q\leq 1$, this condition is satisfied if 
\begin{equation*}A=\exp\left(\frac{1-\eps}{B}+\ln(\gamma)\right)+B\ln(\eta)=erC_4C_2^{\frac{1-\eps}{\eps}},\end{equation*}
which fixes the value of 
\begin{equation*} C_5=C_2^{erC_4C_2^{\frac{1-\eps}{\eps}}}.\end{equation*}
Using the fact that $A-B\ln(Q)\geq erC_4C_2^{\frac{1-\eps}{\eps}}$, it follows that
\begin{equation*}\frac{\Vert f \Vert_{\text{L}^2(0,\omega_0)}}{\Vert f\Vert_{\text{L}^2(\R^+)}}\leq \xi \left(\frac{\Vert f\Vert_{\text{L}^2(\omega_0,\omega_0+\sigma)}}{\Vert f \Vert_{\text{L}^2(\R^+)}}\right)^{1-\eps},\end{equation*}
where 
\begin{equation} \label{formxi} \xi:=\frac{C_3c}{\sqrt{2\pi}}C_2^{-\frac{1-\eps}{\eps}\left(1+\alpha+erC_4C_2^{\frac{1-\eps}{\eps}}\right)}\left(erC_4\right)^{-\alpha} +C_1C_2^{erC_4C_2^{\frac{1-\eps}{\eps}}}. \end{equation}
\end{proof}

\begin{remark} The expression \eqref{formxi} certainly over-estimates the optimal constant in \eqref{majolem}, in particular in view of \eqref{notopti}. \end{remark}

We now consider two functions $f$ and $f_{\text{app}}$ of one variable. The following theorem provides a control over the distance between $f$ and $f_{\text{app}}$ using only the values of their Fourier transforms on the interval $[\omega_0,+\infty)$.

\begin{theorem}[Reconstruction with low frequency gap in the Fourier transform]
\label{ther} Let $f,f_{\text{app}}\in \text{L}^2(-r,r)$ where $r\in \R^*_+$. Let $\omega_0,\sigma\in \R^*_+$. We assume that there exists $M\in \R^*_+$ such that
\begin{equation}\Vert f\Vert_{\text{L}^2(-r,r)}\leq M, \quad \Vert f_{\text{app}} \Vert_{\text{L}^2(-r,r)}\leq M.\end{equation}
For every $0<\eps<1$, there exists $\xi$, depending on $r,\omega_0,\sigma,\eps$, such that 
\begin{equation}\begin{split} \Vert f-f_{\text{app}} \Vert_{\text{L}^2(-r,r)}^2\leq \frac{\left(8\pi M^2\right)^{\eps}}{\pi}\xi^2\Vert \F(f)-\F(f_{\text{app}}) \Vert_{\text{L}^2(\omega_0,\omega_0+\sigma)}^{2-2\eps}\qquad  \\ +\frac{1}{\pi}\Vert \F(f)-\F(f_{\text{app}})\Vert_{\text{L}^2(\omega_0,+\infty)}^2.\end{split} \end{equation}
\end{theorem}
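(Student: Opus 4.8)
The plan is to set $g:=f-f_{\text{app}}$, which lies in $\text{L}^2(-r,r)$ and is supported in $(-r,r)$, and to reduce everything to an estimate on $\F(g)=\F(f)-\F(f_{\text{app}})$ (using linearity of the Fourier transform). First I would invoke Plancherel's identity together with the Hermitian symmetry $\F(g)(-\omega)=\overline{\F(g)(\omega)}$, valid because $g$ is real-valued (inherited from the real source $s$, hence from each $s_n$). This yields $\Vert g\Vert_{\text{L}^2(-r,r)}^2=\tfrac{1}{\pi}\Vert \F(g)\Vert_{\text{L}^2(\R_+)}^2$, and splitting the half-line at $\omega_0$ gives
\begin{equation*}\Vert g\Vert_{\text{L}^2(-r,r)}^2=\frac{1}{\pi}\Vert \F(g)\Vert_{\text{L}^2(0,\omega_0)}^2+\frac{1}{\pi}\Vert \F(g)\Vert_{\text{L}^2(\omega_0,+\infty)}^2,\end{equation*}
whose second term is exactly the last term appearing in the statement. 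It then remains to control the low-frequency term $\tfrac1\pi\Vert\F(g)\Vert_{\text{L}^2(0,\omega_0)}^2$, which is where Lemma \ref{lemma1} enters.

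Next I would apply Lemma \ref{lemma1} to $h:=\restriction{\F(g)}{\R_+}$. Since $g$ is compactly supported, $\F(g)$ is entire (Paley--Wiener), so $h\in\mathcal{C}^\infty(\R_+)\cap\text{L}^2(\R_+)$. The crucial hypothesis to check is the derivative bound. Differentiating under the integral sign gives $h^{(j)}(\omega)=\int_{-r}^r g(z)(-iz)^j e^{-i\omega z}\dd z$, whence by $|z|\le r$ and Cauchy--Schwarz
\begin{equation*}|h^{(j)}(\omega)|\le r^j\int_{-r}^r|g|\le r^j\sqrt{2r}\,\Vert g\Vert_{\text{L}^2(-r,r)}=\sqrt{\tfrac{2r}{\pi}}\,r^j\,\Vert h\Vert_{\text{L}^2(\R_+)},\end{equation*}
the last equality using the Plancherel relation above so that the bound is normalized by $\Vert h\Vert_{\text{L}^2(\R_+)}$ as the lemma requires. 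This is of the form $c\,\rho^j/j^\alpha\,\Vert h\Vert_{\text{L}^2(\R_+)}$ once the base is slightly enlarged (e.g. $\rho=2r$, $\alpha=1$, $c=\sqrt{2r/\pi}\sup_{j\ge1}j2^{-j}$, since $j\,2^{-j}$ is bounded), so the lemma applies and its constant $\xi$ depends only on $r,\omega_0,\sigma,\eps$. Note that the abstract derivative growth rate of the lemma is here dictated precisely by the support radius $r$ of $g$.

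Finally I would combine the pieces. Rearranging the conclusion of Lemma \ref{lemma1} and squaring gives $\Vert h\Vert_{\text{L}^2(0,\omega_0)}^2\le \xi^2\,\Vert h\Vert_{\text{L}^2(\R_+)}^{2\eps}\Vert h\Vert_{\text{L}^2(\omega_0,\omega_0+\sigma)}^{2-2\eps}$. Bounding the prefactor by the a priori data via $\Vert h\Vert_{\text{L}^2(\R_+)}^2\le \Vert\F(g)\Vert_{\text{L}^2(\R)}^2=2\pi\Vert g\Vert_{\text{L}^2}^2\le 2\pi(2M)^2=8\pi M^2$ and inserting into the split identity produces exactly
\begin{equation*}\Vert f-f_{\text{app}}\Vert_{\text{L}^2(-r,r)}^2\le \frac{(8\pi M^2)^\eps}{\pi}\xi^2\Vert\F(f)-\F(f_{\text{app}})\Vert_{\text{L}^2(\omega_0,\omega_0+\sigma)}^{2-2\eps}+\frac{1}{\pi}\Vert\F(f)-\F(f_{\text{app}})\Vert_{\text{L}^2(\omega_0,+\infty)}^2.\end{equation*}
I expect the main obstacle to lie not in this bookkeeping but in verifying the hypotheses of Lemma \ref{lemma1} for $h=\F(g)$: establishing the derivative estimate with constants in the exact normalized form the lemma demands, and using the real-valuedness of $g$ to pass from the full-line Plancherel identity to the half-line $\R_+$, so that only the data on $[\omega_0,+\infty)$ is required. (The factor $8$ rather than $4$ simply reflects the wasteful but valid bound $\Vert\F(g)\Vert_{\text{L}^2(\R_+)}^2\le\Vert\F(g)\Vert_{\text{L}^2(\R)}^2$ used for the prefactor.)
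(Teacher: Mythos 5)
Your proof is correct and follows essentially the same route as the paper: the same Plancherel splitting of $\Vert f-f_{\text{app}}\Vert_{\text{L}^2(-r,r)}^2$ into the parts on $(0,\omega_0)$ and $(\omega_0,+\infty)$ (using real-valuedness to work on the half-line), the same application of Lemma \ref{lemma1} to $\F(f)-\F(f_{\text{app}})$ restricted to $\R_+$, and the same prefactor bound $\Vert \F(f)-\F(f_{\text{app}})\Vert_{\text{L}^2(\R_+)}^{2\eps}\leq (8\pi M^2)^{\eps}$. The only cosmetic difference is in the derivative estimate: the paper applies Cauchy--Schwarz with $\Vert x^j\Vert_{\text{L}^2(-r,r)}$, obtaining the decay $r^j/\sqrt{2j+1}$ directly (i.e.\ $\alpha=1/2$ with base $r$), whereas you bound $|x|^j\leq r^j$ pointwise and recover the required $1/j^{\alpha}$ factor by enlarging the base to $2r$ --- both choices satisfy the hypothesis of the lemma.
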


\begin{proof}
We know that 
\begin{equation*}\Vert f-f_{\text{app}}\Vert_{\text{L}^2(-r,r)}^2= \frac{1}{\pi}\Vert \F(f)-\F(f_{\text{app}})\Vert_{\text{L}^2(0,\omega_0)}^2+\frac{1}{\pi}\Vert \F(f)-\F(f_{\text{app}}) \Vert_{\text{L}^2(\omega_0,+\infty)}^2.\end{equation*} 
Since $f$ is compactly supported as a function of $\text{L}^2(\R)$, we know that for every $j\in \N$, $\omega\in \R_+$,
\begin{equation*}\begin{split} \left|\frac{\dd^j}{\dd \omega^j}(\F(f)-\F(f_{\text{app}}))(\omega)\right| &\leq \left(2\int_0^r x^{2j}\dd x\right)^{1/2}\Vert f-f_{\text{app}}\Vert_{L^2(-r,r)} \\ &\leq  \frac{r^{j}\sqrt{r}\Vert\F(f)-\F(f_{\text{app}}) \Vert_{L^2(\R_+)}}{\sqrt{\pi}\sqrt{2j+1}}.\end{split}\end{equation*}
It follows from Lemma \ref{lemma1} that
\begin{equation*}\begin{split} \Vert f -f_{\text{app}} \Vert_{\text{L}^2(-r,r)}^2\leq \frac{1}{\pi}\xi^2\Vert \F(f)-\F(f_{\text{app}})\Vert_{\text{L}^2(\R^+)}^{2\eps}\Vert \F(f)-\F(f_{\text{app}})\Vert_{\text{L}^2(\omega_0,\omega_0+\sigma)}^{2-2\eps}\\+\frac{1}{\pi}\Vert \F(f)-\F(f_{\text{app}})\Vert_{\text{L}^2(\omega_0,+\infty)}^2.\end{split}\end{equation*}
Since $\Vert \F(f)-\F(f_{\text{app}})\Vert_{\text{L}^2(\R^+)}^{2\eps}\leq \left(2\pi\Vert f-f_{\text{app}}\Vert_{\text{L}^2(-r,r)}^2\right)^{\eps}\leq \left(8\pi M^2\right)^{\eps}$, the result follows.
\end{proof}

Next, we generalize Theorem \ref{ther} to the case when we control the Fourier transform of $f$ on a finite interval $[\omega_0, \omega_1]$. 

\begin{theorem}[Reconstruction from a finite interval of the Fourier transform] \label{therr}
Let $f,f_{\text{app}}\in \text{H}^1(-r,r)$ where $r>0$. Let $\omega_0,\omega_1\in \R^*_+,\omega_0<\omega_1$. We assume that there exists $M\in \R^*_+$ such that
\begin{equation}\Vert f\Vert_{\text{H}^1(-r,r)}\leq M, \quad \Vert f_{\text{app}} \Vert_{\text{H}^1(-r,r)}\leq M.\end{equation}
For every $0<\eps<1$, there exists $\xi$, depending on $r,\omega_0,\omega_1,\eps,M$, such that 
\begin{equation}\begin{split} \Vert f-f_{\text{app}} \Vert_{\text{L}^2(-r,r)}^2\leq  \frac{\left(8\pi M^2\right)^{\eps}}{\pi}\xi^2\Vert \F(f)-\F(f_{\text{app}}) \Vert_{\text{L}^2(\omega_0,\omega_1)}^{2-2\eps}\qquad \qquad \\ \qquad +\frac{1}{\pi}\Vert \F(f)-\F(f_{\text{app}})\Vert_{\text{L}^2(\omega_0,\omega_1)}^2+\frac{8}{\omega_1^2}M^2.\end{split}\end{equation}
\end{theorem}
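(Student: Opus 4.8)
The plan is to reduce the statement to Theorem \ref{ther} by peeling off the part of the spectrum lying beyond $\omega_1$ and absorbing it into the new error term $\frac{8}{\omega_1^2}M^2$. Set $g:=f-f_{\text{app}}$, which belongs to $\text{H}^1(-r,r)$ with $\Vert g\Vert_{\text{H}^1(-r,r)}\le 2M$. Exactly as in the proof of Theorem \ref{ther}, since $g$ is real and compactly supported, Plancherel's identity in the chosen normalization of $\F$ gives
\begin{equation*}
\Vert g\Vert_{\text{L}^2(-r,r)}^2=\frac{1}{\pi}\Vert \F(g)\Vert_{\text{L}^2(0,\omega_0)}^2+\frac{1}{\pi}\Vert \F(g)\Vert_{\text{L}^2(\omega_0,\omega_1)}^2+\frac{1}{\pi}\Vert \F(g)\Vert_{\text{L}^2(\omega_1,+\infty)}^2,
\end{equation*}
so the three intervals $(0,\omega_0)$, $(\omega_0,\omega_1)$ and $(\omega_1,+\infty)$ may be handled separately.

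The middle interval needs no work: it contributes precisely $\frac{1}{\pi}\Vert \F(f)-\F(f_{\text{app}})\Vert_{\text{L}^2(\omega_0,\omega_1)}^2$, the second term of the conclusion. For the low-frequency gap $(0,\omega_0)$ I would repeat verbatim the argument of Theorem \ref{ther}: the compact support of $g$ makes $\F(g)$ analytic and supplies the derivative bounds $\left|\frac{\dd^j}{\dd\omega^j}\F(g)(\omega)\right|\le \frac{r^j\sqrt{r}}{\sqrt{\pi}\sqrt{2j+1}}\Vert \F(g)\Vert_{\text{L}^2(\R_+)}$ required to invoke Lemma \ref{lemma1}, now with $\sigma=\omega_1-\omega_0$. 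This yields $\Vert \F(g)\Vert_{\text{L}^2(0,\omega_0)}^2\le \xi^2\Vert \F(g)\Vert_{\text{L}^2(\omega_0,\omega_1)}^{2-2\eps}\Vert \F(g)\Vert_{\text{L}^2(\R_+)}^{2\eps}$, and bounding the last factor by $(8\pi M^2)^\eps$ via Plancherel and $\Vert g\Vert_{\text{L}^2}\le 2M$ produces the first term of the conclusion.

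The only genuinely new ingredient is the high-frequency tail $(\omega_1,+\infty)$, and this is exactly why the hypothesis is strengthened from $\text{L}^2$ (as in Theorem \ref{ther}) to $\text{H}^1$. On $(\omega_1,+\infty)$ we have $1\le \omega^2/\omega_1^2$, so
\begin{equation*}
\frac{1}{\pi}\Vert \F(g)\Vert_{\text{L}^2(\omega_1,+\infty)}^2\le \frac{1}{\pi\omega_1^2}\int_0^{+\infty}\omega^2|\F(g)(\omega)|^2\,\dd\omega\le \frac{1}{\pi\omega_1^2}\int_\R \omega^2|\F(g)(\omega)|^2\,\dd\omega.
\end{equation*}
Using the identity $\F(g')(\omega)=i\omega\F(g)(\omega)$, the last integral equals $\Vert \F(g')\Vert_{\text{L}^2(\R)}^2=2\pi\Vert g'\Vert_{\text{L}^2}^2$ by Plancherel, whence the tail is at most $\frac{2}{\omega_1^2}\Vert g'\Vert_{\text{L}^2}^2\le \frac{2}{\omega_1^2}\Vert g\Vert_{\text{H}^1}^2\le \frac{8}{\omega_1^2}M^2$. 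Summing the three contributions gives the claim.

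The step to get right is this last one: the $\text{H}^1$ regularity buys the $O(1/\omega)$ decay of $\F(g)$ that makes the otherwise uncontrolled tail both finite and small, and a weaker smoothness assumption would not close the estimate. (The only subtlety is that the relation $\F(g')=i\omega\F(g)$ is used for the zero-extension of $g$, which is harmless in the intended application where the reconstructed functions vanish near $\pm r$; the mild factor-of-two slack in passing from $\int_0^{+\infty}$ to $\int_\R$ is what produces the clean constant $8$ rather than $4$.)
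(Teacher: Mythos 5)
Your proof is correct and follows essentially the same route as the paper: the paper likewise invokes Theorem \ref{ther} to handle $(0,\omega_0)$ and $(\omega_0,\omega_1)$, and controls the tail $\Vert\F(f)-\F(f_{\text{app}})\Vert_{\text{L}^2(\omega_1,+\infty)}$ through the identity $\F(g')(\omega)=i\omega\F(g)(\omega)$ and Plancherel, which is exactly where the $\text{H}^1$ hypothesis enters to produce the $\frac{8M^2}{\omega_1^2}$ term. If anything your write-up is slightly more careful than the paper's: you take $\sigma=\omega_1-\omega_0$ so that the Lemma \ref{lemma1} interval is genuinely $(\omega_0,\omega_1)$ (the paper writes $\sigma=\max(1,\omega_1)$, which does not make the intervals match), and you explicitly flag the implicit assumption that the zero-extensions of $f$ and $f_{\text{app}}$ lie in $\text{H}^1(\R)$ (no boundary terms at $\pm r$), which both proofs need for $\F(g')=i\omega\F(g)$ to hold.
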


\begin{proof}
We choose $\sigma=\max(1,\omega_1)$ in the Theorem \ref{ther}. Since $f-f_{\text{app}}\in \text{H}^1(-r,r)$, 
\begin{equation*}\begin{split} \Vert \F(f)-\F(f_{\text{app}})\Vert_{\text{L}^2(\omega_1,+\infty)}^2&=\left\Vert \omega\mapsto\frac{ \F(f')(\omega)-\F(f_{\text{app}}')(\omega)}{\omega}\right\Vert_{\text{L}^2(\omega_1,+\infty)}^2 \\ &\leq \frac{2\pi}{\omega_1^2}\Vert f'-f_{\text{app}}'\Vert_{\text{L}^2(-r,r)}^2.\end{split}\end{equation*}

\end{proof}

\begin{remark} \label{remcite}
Using \eqref{formxi}, we notice that $\xi\underset{\omega_0\rightarrow 0}{\longrightarrow} 0$ and that $\frac{8\pi M^2}{\omega_1^2}\underset{\omega_1\rightarrow +\infty}{\longrightarrow} 0$. Moreover, if we define $d=\Gamma(f)$ and $d_{\text{app}}=\Gamma(f_{\text{app}})$ then $2|\omega(d-d_{\text{app}})(\omega)|=|\F(f-f_{\text{app}})(\omega)|$ so \begin{equation}\frac{1}{\pi}\Vert \F(f)-\F(f_{\text{app}})\Vert_{\text{L}^2(\omega_0,\omega_1)}^2\underset{[\omega_0,\omega_1]\rightarrow (0,+\infty)}{\longrightarrow}\frac{4}{\pi}\Vert d-\tilde{d}\Vert_\text{H}^2,\end{equation}
which is consistent with Proposition \ref{invgamma}. \end{remark}

Theorem \ref{therr} provides a theoretical control of the error of the reconstruction between $f$ and $f_{\text{app}}$. However, since $\xi$ can be  very large, such control might not be sufficient to ensure a numerical convergence of $f_{\text{app}}$ to $f$. To illustrate this point, we consider a source $s$ supported on $[1-r,1+r]$. Let $X$ be the discretization of $[1-r,1+r]$ with $N_X$ points. We define $h=2r/(N_X-1)$. Using the fast Fourier transform, we compute the discretization of the Fourier transform $\F(s)$ on a set $K$ of frequencies. We notice that $\F(s)(K)=Ms(X)$ where $M:=h(e^{ixk})_{x\in X, k\in K}$, and that $s(X)=M^{-1}\F(s)(K)$. To simulate the low frequency gap, we truncate $K$ and define $K_{t}=\{k\in K, k>\omega_0\}$ and $M_t=h(e^{ixk})_{x\in X, k\in K_t}$. Then, $s(X)=(M_t^TM_t)^{-1}M_t^T\F(s)(K_t)$. Even if $M_t^TM_t$ is invertible, its condition number strongly depends on $r$ and $\omega_0$ just like the constant $\xi$ in Lemma \ref{lemma1}. Figure \ref{cutcond} illustrates this fact for different values of $\omega_0$ and $r$.

\begin{figure}[!h]
\centering
\scalebox{.55}{% This file was created by matlab2tikz.
%
%The latest updates can be retrieved from
%  http://www.mathworks.com/matlabcentral/fileexchange/22022-matlab2tikz-matlab2tikz
%where you can also make suggestions and rate matlab2tikz.
%
\definecolor{mycolor1}{rgb}{0.00000,0.44700,0.74100}%
\definecolor{mycolor2}{rgb}{0.85000,0.32500,0.09800}%
\definecolor{mycolor3}{rgb}{0.92900,0.69400,0.12500}%
\definecolor{mycolor4}{rgb}{0.49400,0.18400,0.55600}%
\definecolor{mycolor5}{rgb}{0.46600,0.67400,0.18800}%
\definecolor{mycolor6}{rgb}{0.30100,0.74500,0.93300}%
\begin{tikzpicture}

\begin{semilogyaxis}[%
width=6.028in,
height=4.754in,
at={(0in,0in)},
scale only axis,
xmin=0,
xmax=1,
xlabel style={font=\color{white!15!black}},
xlabel={$r$},
ymin=1,
ymax=100000,
ylabel style={font=\color{white!15!black}},
ylabel={$\text{cond}_2(M_t^TM_t)$},
axis background/.style={fill=white},
grid=minor,
legend style={at={(0.03,0.97)}, anchor=north west, legend cell align=left, align=left, draw=white!15!black}
]
\addplot [color=mycolor1]
  table[row sep=crcr]{%
0.02	1.02020191089183\\
0.04	1.03927737437905\\
0.06	1.05905746304242\\
0.08	1.07957092683371\\
0.1	1.10084786648388\\
0.12	1.1229198032836\\
0.14	1.14581975265303\\
0.16	1.16958230171518\\
0.18	1.19424369109964\\
0.2	1.21984190121607\\
0.22	1.24641674325178\\
0.24	1.27400995516104\\
0.26	1.30266530293069\\
0.28	1.33242868742231\\
0.3	1.36334825710839\\
0.32	1.39547452703962\\
0.34	1.4288605043986\\
0.36	1.46356182101669\\
0.38	1.49963687325214\\
0.4	1.53714696965251\\
0.42	1.57615648684565\\
0.44	1.61673303413313\\
0.46	1.65894762728577\\
0.48	1.70287487206975\\
0.5	1.74859315806374\\
0.52	1.79618486336076\\
0.54	1.84573657078118\\
0.56	1.89733929626202\\
0.58	1.95108873012697\\
0.6	2.0070854919807\\
0.62	2.06543540001841\\
0.64	2.12624975558454\\
0.66	2.18964564386746\\
0.68	2.25574625166597\\
0.7	2.32468120322327\\
0.72	2.39658691517887\\
0.74	2.47160697175307\\
0.76	2.54989252135016\\
0.78	2.63160269582595\\
0.8	2.71690505375254\\
0.82	2.80597604908494\\
0.84	2.89900152672379\\
0.86	2.99617724655017\\
0.88	3.09770943761971\\
0.9	3.20381538428472\\
0.92	3.3147240461385\\
0.94	3.4306767137796\\
0.96	3.55192770252216\\
0.98	3.67874508630326\\
1	3.81141147418241\\
};
\addlegendentry{$\pi$}

\addplot [color=mycolor2]
  table[row sep=crcr]{%
0.02	1.0436109960383\\
0.04	1.08657560685493\\
0.06	1.13298833707292\\
0.08	1.18315675999763\\
0.1	1.23741992591571\\
0.12	1.29615187436101\\
0.14	1.35976555495103\\
0.16	1.42871720558488\\
0.18	1.50351124264693\\
0.2	1.58470572440666\\
0.22	1.6729184561323\\
0.24	1.76883381363153\\
0.26	1.87321037111026\\
0.28	1.98688942951265\\
0.3	2.11080455301226\\
0.32	2.24599223421686\\
0.34	2.39360382310001\\
0.36	2.55491887087738\\
0.38	2.73136005822651\\
0.4	2.92450989765898\\
0.42	3.13612942276435\\
0.44	3.36817910279742\\
0.46	3.62284225001564\\
0.48	3.90255121970677\\
0.5	4.21001673944538\\
0.52	4.54826074529175\\
0.54	4.92065314897439\\
0.56	5.33095301225983\\
0.58	5.78335466342544\\
0.6	6.28253935685384\\
0.62	6.83373315121904\\
0.64	7.44277176554259\\
0.66	8.11617326681831\\
0.68	8.86121954920521\\
0.7	9.68604768455022\\
0.72	10.5997523588581\\
0.74	11.6125007612549\\
0.76	12.7356614631637\\
0.78	13.9819490181729\\
0.8	15.3655862303952\\
0.82	16.902486283904\\
0.84	18.6104572017979\\
0.86	20.5094314144245\\
0.88	22.6217235670816\\
0.9	24.9723200927718\\
0.92	27.5892045216708\\
0.94	30.5037230018918\\
0.96	33.7509950736897\\
0.98	37.3703753795428\\
1	41.4059727153466\\
};
\addlegendentry{$2\pi$}

\addplot [color=mycolor3]
  table[row sep=crcr]{%
0.02	1.06808196470298\\
0.04	1.1380865736308\\
0.06	1.21692671370377\\
0.08	1.3058529894343\\
0.1	1.40631896919322\\
0.12	1.52001693704282\\
0.14	1.64892018988787\\
0.16	1.79533310177826\\
0.18	1.96195040477881\\
0.2	2.15192740548909\\
0.22	2.36896317602994\\
0.24	2.61739913769335\\
0.26	2.9023359060272\\
0.28	3.22977180176945\\
0.3	3.60676706946622\\
0.32	4.04163860486318\\
0.34	4.54419089729894\\
0.36	5.12598997313328\\
0.38	5.80068841509679\\
0.4	6.5844110713598\\
0.42	7.49621290594034\\
0.44	8.55862263708133\\
0.46	9.79828843180572\\
0.48	11.2467450558234\\
0.5	12.941325617238\\
0.52	14.926245508295\\
0.54	17.2538914831804\\
0.56	19.9863551811385\\
0.58	23.1972580158985\\
0.6	26.9739234479459\\
0.62	31.419963527445\\
0.64	36.6583595922062\\
0.66	42.8351325467947\\
0.68	50.1237167383794\\
0.7	58.7301736857931\\
0.72	68.8994085336967\\
0.74	80.9225839619728\\
0.76	95.1459644229645\\
0.78	111.98146925494\\
0.8	131.919267930715\\
0.82	155.542816245284\\
0.84	183.546810789646\\
0.86	216.758633196457\\
0.88	256.163968500236\\
0.9	302.937417277004\\
0.92	358.479083529357\\
0.94	424.458314977125\\
0.96	502.866006022599\\
0.98	596.077154018097\\
1	706.925696005842\\
};
\addlegendentry{$3\pi$}

\addplot [color=mycolor4]
  table[row sep=crcr]{%
0.02	1.09366801856695\\
0.04	1.19422768217086\\
0.06	1.31239284776073\\
0.08	1.45165108487025\\
0.1	1.61628157205771\\
0.12	1.81154935506132\\
0.14	2.04394900415433\\
0.16	2.32151041044932\\
0.18	2.65418273251596\\
0.2	3.05431661799551\\
0.22	3.5372700047243\\
0.24	4.12216934000577\\
0.26	4.83286631284447\\
0.28	5.69914064054398\\
0.3	6.75821268258041\\
0.32	8.0566464234456\\
0.34	9.6527446227421\\
0.36	11.6195648758291\\
0.38	14.0487194798534\\
0.4	17.0551652807649\\
0.42	20.7832445264351\\
0.44	25.4143072692288\\
0.46	31.1763339907356\\
0.48	38.3560888781496\\
0.5	47.3144759495989\\
0.52	58.5059501279303\\
0.54	72.5030637365701\\
0.56	90.027518894209\\
0.58	111.989464664284\\
0.6	139.537245893461\\
0.62	174.120405606836\\
0.64	217.569499208491\\
0.66	272.197240647702\\
0.68	340.926724337519\\
0.7	427.454023592071\\
0.72	536.454448036171\\
0.74	673.844265378163\\
0.76	847.112905828753\\
0.78	1065.74476008108\\
0.8	1341.75489645555\\
0.82	1690.36966896173\\
0.84	2130.89166088514\\
0.86	2687.7992130305\\
0.88	3392.14456734269\\
0.9	4283.33224024697\\
0.92	5411.38168071329\\
0.94	6839.80691493838\\
0.96	8649.28245911446\\
0.98	10942.311502962\\
1	13849.1720575104\\
};
\addlegendentry{$4\pi$}

\addplot [color=mycolor5]
  table[row sep=crcr]{%
0.02	1.12042542471836\\
0.04	1.2554638205837\\
0.06	1.42117211210537\\
0.08	1.62549936348632\\
0.1	1.87875277824383\\
0.12	2.19435115425502\\
0.14	2.58982699479906\\
0.16	3.08816014554565\\
0.18	3.71955336948826\\
0.2	4.52379708273583\\
0.22	5.55341980020074\\
0.24	6.87788707940232\\
0.26	8.58920082939359\\
0.28	10.8093707234847\\
0.3	13.7003907738162\\
0.32	17.4775711785474\\
0.34	22.4273675373885\\
0.36	28.9312423164004\\
0.38	37.4976219457238\\
0.4	48.8047243274838\\
0.42	63.7579895602097\\
0.44	83.5671374858034\\
0.46	109.849615625191\\
0.48	144.76954748409\\
0.5	191.224456713382\\
0.52	253.096314732309\\
0.54	335.589227121442\\
0.56	445.683863703312\\
0.58	592.749261234138\\
0.6	789.366850811038\\
0.62	1052.44079131033\\
0.64	1404.69469616158\\
0.66	1876.69002365949\\
0.68	2509.54901498045\\
0.7	3358.62952266317\\
0.72	4498.48636332825\\
0.74	6029.57207802211\\
0.76	8087.29022346384\\
0.78	10854.2315279702\\
0.8	14576.7177765832\\
0.82	19587.1777912133\\
0.84	26334.4219281034\\
0.86	35424.6172260597\\
0.88	47676.7642073391\\
0.9	64197.8329139293\\
0.92	86484.5587361171\\
0.94	116561.403143865\\
0.96	157167.589127348\\
0.98	212010.750919201\\
1	286111.035303598\\
};
\addlegendentry{$5\pi$}

\addplot [color=mycolor6]
  table[row sep=crcr]{%
0.02	1.14841370943942\\
0.04	1.32231324594948\\
0.06	1.54536520545807\\
0.08	1.83355121606324\\
0.1	2.20879166380095\\
0.12	2.70131345028602\\
0.14	3.35298943243953\\
0.16	4.22205110284328\\
0.18	5.38974224287903\\
0.2	6.96971767277754\\
0.22	9.12132841969764\\
0.24	12.0684163042297\\
0.26	16.1259292252437\\
0.28	21.7376516550406\\
0.3	29.5297490742365\\
0.32	40.3868306559418\\
0.34	55.560100140482\\
0.36	76.8212617869551\\
0.38	106.681709381346\\
0.4	148.704916224731\\
0.42	207.951960473462\\
0.44	291.617339279104\\
0.46	409.936910998968\\
0.48	577.485211175188\\
0.5	815.030193234106\\
0.52	1152.18637775318\\
0.54	1631.21213671559\\
0.56	2312.4473281924\\
0.58	3282.10380117958\\
0.6	4663.43231959039\\
0.62	6632.7368697237\\
0.64	9442.35117491292\\
0.66	13453.61915653\\
0.68	19184.256001995\\
0.7	27376.3897531179\\
0.72	39094.3552446809\\
0.74	55865.3087778468\\
0.76	79881.4962877643\\
0.78	114291.32497445\\
0.8	163618.393361981\\
0.82	234364.96894532\\
0.84	335881.440723134\\
0.86	481619.452986805\\
0.88	690938.722584039\\
0.9	991713.161472582\\
0.92	1424091.31020186\\
0.94	2045924.37049545\\
0.96	2940604.24090681\\
0.98	4228385.74507488\\
1	6082747.82279915\\
};
\addlegendentry{$6\pi$}

\end{semilogyaxis}

\end{tikzpicture}%}
\caption{\label{cutcond} Condition number of $M_t^TM_t$ for different sizes of support and values of $\omega_0$. Here, $X$ is the discretization of $[1-r,1+r]$ with $500r+1$ points. The $x$-axis represents the evolution of $r$, and the $y$-axis $\text{cond}_2(M_t^TM_t)$. Each curve corresponds to value of $\omega_0$ as indicated in the left rectangle.}
\end{figure}
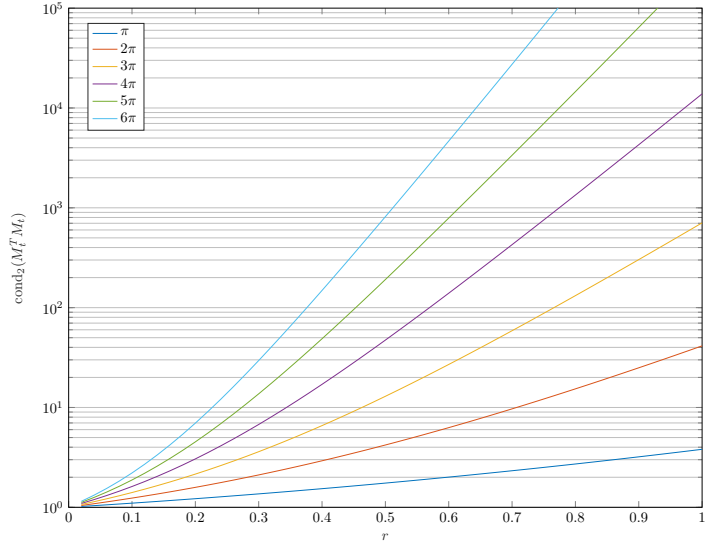

To conclude, if we only have access to perturbed Fourier transform data on a given interval of frequencies, we can build an approximation of $f$ provided $f$ is compactly supported and an \emph{a priori} bound on the norm of $f$ is known. However, depending of $\omega_0$ and $r$, the error between $f$ and its approximation can be large. We can reduce it by increasing $\omega_1$ and by diminishing $\omega_0$ and $r$.

\section{Application to the identification of shape defects, bending or inhomogeneity}

We propose a method to identify shape defects or bends in a waveguide, which is almost identical to our method of source detection. We first map the deformed waveguide to a regular waveguide, and then use the source inverse method discussed in the first section to reconstruct the parameters that characterize the defect.

\subsection{Transformation of the deformed waveguide}

Let $\phi_0$ and $\phi_1$ in $\mathcal{C}^1(\R)$. We consider a deformed waveguide
\begin{equation}
\widetilde{\Omega}=\bigcup_{x\in \R}(\phi_0(x),\phi_1(x))=\{\phi_0(x)<y<\phi_1(x), x\in \R\}. 
\end{equation}  
A wave $\tilde{u}$ in $\widetilde{\Omega}$ satisfies the equation 
\begin{equation} \label{avantvar} \left\{ \begin{array}{cl} \Delta \tilde{u} +k^2\tilde{u} =-\tilde{s} &  \text{ in } \widetilde{\Omega}, \\
\partial_\nu \tilde{u}=\tilde{b}_1 & \text{ on } \partial\widetilde{\Omega}_{\text{top}}, \\
\partial_\nu \tilde{u}=\tilde{b}_2 &  \text{ on }\partial\widetilde{\Omega}_{\text{bot}}, \\
\tilde{u} \text{ is outgoing}\textcolor{red}{,}
\end{array}\right. \end{equation}
where $\tilde{s}\in \text{L}^2\loc(\widetilde{\Omega})$, $\tilde{b_1}\in \text{H}^{1/2}\loc(\partial\widetilde{\Omega}_{\text{top}}),$ and $\tilde{b_2}\in \text{H}^{1/2}\loc(\partial\widetilde{\Omega}_{\text{bot}})$.
To use the tools developed in the previous section, we map $\widetilde{\Omega}$ to the regular waveguide $\Omega=(0,1)\times \R$. Let $\phi$ be a one-to-one function that maps $\Omega$ into $\widetilde{\Omega}$. Such a function exists and can even be assumed to be conformal (see for instance \cite{abra1}). We define $u=\tilde{u}\circ \phi$ the wave in the regular guide, $J\phi$ the Jacobien matrix of $\phi$, $\tau=|\text{det} (J\phi)|$, $t_1=\vert \nabla \phi_0\vert$ , and $t_2=\vert \nabla \phi_1\vert $. The variational formulation of \eqref{avantvar} shows that for every $\tilde{v}\in \text{H}^1(\widetilde{\Omega})$, 

\begin{equation*}\int_{\widetilde{\Omega}} \nabla \tilde{u} \cdot \nabla \tilde{v}-k^2\int_{\widetilde{\Omega}}\tilde{u} \tilde{v}=\int_{\widetilde{\Omega}} \tilde{s}\tilde{v}+\int_{\partial \widetilde{\Omega}_{\text{top}}} \tilde{b}_1\tilde{v} +\int_{\partial \widetilde{\Omega}_{\text{bot}}} \tilde{b}_2\tilde{v}, \end{equation*}
or equivalently,
\begin{equation}\begin{split} \int_{\Omega} (\nabla\tilde{u}\circ \phi)\cdot (\nabla \tilde{v}\circ \phi)\tau -k^2\int_{\Omega} (\tilde{u}\circ \phi) (\tilde{v}\circ \phi)\tau =\int_{\Omega} (\tilde{s}\circ \phi)(\tilde{v}\circ \phi)\tau\\+ \int_{\R} (\tilde{b}_1\circ \phi_1 \,t_1+\tilde{b}_2\circ \phi_0 \,t_2)\, \tilde{v}\circ \phi. \end{split}\end{equation}
Using the fact that $\nabla u =J\phi^T\nabla \tilde{u}\circ \phi$, we set $s=\tilde{s}\circ \phi$, $b_1=\tilde{b}_1\circ \phi_1$, $b_2=\tilde{b}_2\circ\phi_0$, and obtain that for every $v\in \text{H}^1(\Omega)$,

\begin{equation}\int_{\Omega}S\nabla u \cdot \nabla v -k^2\int_{\Omega} u\,v\,\tau =\int_{\Omega} s\, v\, \tau +\int_{\partial \Omega_{\text{top}}} b_1\, v \,t_1+\int_{\partial \Omega_{\text{bot}}} b_2\, v \,t_2.  \end{equation}
where $S=J\phi^{-1}\left(J\phi^{-1}\right)^{T}\tau$, which yields the equation satisfied by $u$: 
\begin{equation}\label{droit}\left\{\begin{array}{cl}
\nabla\cdot(S\nabla u)+k^2\tau u=-\tau s & \text{ in } \Omega, \\ S\nabla u \cdot \nu =b_1t_1 & \text{ on }\partial \Omega_{\text{top}}, \\ S\nabla u \cdot \nu =b_2t_2 & \text{ on }  \partial \Omega_{\text{bot}}. \end{array} \right.\end{equation}
We write $S=I_2+M$ and $\tau=1+\eps$, where $M$ and $\eps$ are expected to be small if the deformation is small. The above partial differential equation becomes 

\begin{equation}\label{umod}\left\{\begin{array}{cl}
\Delta u +k^2u=-\tau s -\nabla\cdot(M\nabla u)-k^2\eps u & \text{ in } \Omega, \\\nabla u \cdot \nu =b_1t_1-M\nabla u\cdot \nu &  \text{ on }\partial \Omega_{\text{top}}, \\ \nabla u \cdot \nu = b_2t_2-M\nabla u\cdot \nu & \text{ on } \partial \Omega_{\text{bot}}.\end{array} \right.\end{equation}
For $r>0$, we set  

\begin{equation}\Sigma : \begin{array}{rcl} \text{H}^2(\Omega_r)& \rightarrow & \text{L}^2(\Omega_r)\\
u & \mapsto & \nabla\cdot (M\nabla u )+k^2\eps u \end{array}, \qquad \Pi : \begin{array}{rcl} \text{H}^2(\Omega_r)& \rightarrow & \widetilde{\text{H}}^{1/2}(-r,r)\\
u & \mapsto & M\nabla u\cdot \nu \end{array}. \end{equation}

The next Proposition follows from the definitions of $\Sigma$ and $\Pi$ and the dependence between $\Vert M\Vert_{\mathcal{C}^1(\Omega_r)}$, $\eps$ and
 $\phi$.
 
\begin{proposition} \label{pcontrP}
The operator $\Sigma$ and $\Pi$ are continuous if $M\in \mathcal{C}^1(\Omega_r)$. In addition, there exists constants $A(\phi),B(\phi)$ depending only on $k$ and $r$ such that
\begin{equation} \label{contrP}\Vert \Sigma(u) \Vert_{\text{L}^2(\Omega_r)}\leq A(\phi)\Vert u \Vert_{\text{H}^2(\Omega_r)}, \qquad \Vert \Pi(u) \Vert_{\text{H}^{1/2}(\Omega_r)}\leq B(\phi)\Vert u \Vert_{\text{H}^2(\Omega_r)}.\end{equation}\end{proposition}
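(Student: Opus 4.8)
The plan is to treat $\Sigma$ and $\Pi$ separately, since the first lands in $\text{L}^2(\Omega_r)$ and only requires zeroth-order (multiplication) estimates, while the second lands in the fractional space $\widetilde{\text{H}}^{1/2}(-r,r)$ and needs a trace estimate together with a multiplier property. In both cases the constants will be built from $\Vert M\Vert_{\mathcal{C}^1(\Omega_r)}$ and $\Vert \eps\Vert_{\text{L}^\infty(\Omega_r)}$, which are finite functionals of $\phi$ through $S=I_2+M$ and $\tau=1+\eps$.

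For $\Sigma$, I would first expand the divergence using the Leibniz rule, writing $\nabla\cdot(M\nabla u)=M:D^2u+(\nabla\cdot M)\cdot\nabla u$, where $D^2u$ is the Hessian of $u$ and $\nabla\cdot M$ is the vector of row-divergences of $M$. Each of the two terms is bounded pointwise by $\Vert M\Vert_{\mathcal{C}^1(\Omega_r)}$ times, respectively, the second and the first derivatives of $u$; integrating over $\Omega_r$ yields control by $\Vert M\Vert_{\mathcal{C}^1(\Omega_r)}\Vert u\Vert_{\text{H}^2(\Omega_r)}$ up to a combinatorial constant depending only on the dimension. The zeroth-order term is estimated directly by $\Vert k^2\eps u\Vert_{\text{L}^2(\Omega_r)}\le k^2\Vert\eps\Vert_{\text{L}^\infty(\Omega_r)}\Vert u\Vert_{\text{L}^2(\Omega_r)}$. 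Collecting these gives $\Vert\Sigma(u)\Vert_{\text{L}^2(\Omega_r)}\le A(\phi)\Vert u\Vert_{\text{H}^2(\Omega_r)}$ with $A(\phi)$ a fixed multiple of $\Vert M\Vert_{\mathcal{C}^1(\Omega_r)}+k^2\Vert\eps\Vert_{\text{L}^\infty(\Omega_r)}$, the remaining factor depending only on $k$ and $r$.

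For $\Pi$ I would proceed in two steps. First, since $u\in\text{H}^2(\Omega_r)$, each component of $\nabla u$ lies in $\text{H}^1(\Omega_r)$, so by the trace theorem on the Lipschitz rectangle $\Omega_r$ its restriction to the relevant boundary component belongs to $\text{H}^{1/2}(-r,r)$, with norm controlled by $\Vert u\Vert_{\text{H}^2(\Omega_r)}$ and a trace constant depending only on $r$. Second, $\Pi(u)=M\nabla u\cdot\nu$ is a finite linear combination of these boundary traces with coefficients given by the entries of $M$; here I would invoke the fact that a $\mathcal{C}^1$ (hence Lipschitz) function is a bounded pointwise multiplier on $\text{H}^{1/2}$, so that multiplication by $M_{ij}$ costs at most a factor $\Vert M\Vert_{\mathcal{C}^1(\Omega_r)}$. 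Because the deformation is localized, $M$ is compactly supported in $(-r,r)$, so $M\nabla u\cdot\nu$ vanishes near the endpoints and thus belongs to $\widetilde{\text{H}}^{1/2}(-r,r)$, as required. This gives $\Vert\Pi(u)\Vert_{\text{H}^{1/2}}\le B(\phi)\Vert u\Vert_{\text{H}^2(\Omega_r)}$ with $B(\phi)$ proportional to $\Vert M\Vert_{\mathcal{C}^1(\Omega_r)}$ times the trace and multiplier constants, which depend only on $r$.

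The routine part is the bound for $\Sigma$, which is pure product rule plus $\text{L}^\infty$ estimates. The only genuinely delicate point is the $\text{H}^{1/2}$ estimate for $\Pi$: one must use that pointwise multiplication by a $\mathcal{C}^1$ function is continuous on the fractional space $\text{H}^{1/2}$, as it does not suffice to control $\Vert M\Vert_{\text{L}^\infty}$ since the $\text{H}^{1/2}$ seminorm also sees the oscillation of $M$, and one must verify that the localization of the deformation places the image in $\widetilde{\text{H}}^{1/2}(-r,r)$ rather than merely in $\text{H}^{1/2}$. Both facts are standard but should be stated explicitly, since they are precisely what make $\Pi$ well defined into the correct target space.
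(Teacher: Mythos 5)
Your proof is correct; the paper in fact gives no argument for this proposition, merely asserting that it "follows from the definitions of $\Sigma$ and $\Pi$ and the dependence between $\Vert M\Vert_{\mathcal{C}^1(\Omega_r)}$, $\eps$ and $\phi$," so your write-up supplies exactly the details left implicit, in the natural way: Leibniz rule plus $\text{L}^\infty$ bounds for $\Sigma$, and trace theorem plus multiplier estimate for $\Pi$. Your two flagged points — that $\mathcal{C}^1$ (Lipschitz) regularity of $M$, not mere boundedness, is what makes multiplication continuous on $\text{H}^{1/2}$, and that the compact support of $M$ (the deformation being localized) is what places $M\nabla u\cdot\nu$ in $\widetilde{\text{H}}^{1/2}(-r,r)$ rather than merely $\text{H}^{1/2}(-r,r)$ — are precisely the facts the paper's assertion tacitly relies on.
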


Recalling Section \ref{secBorn} and Definition \ref{defiborn}, we define the Born approximation $v$ of $u$ by
\begin{equation}\left\{\begin{array}{cl}
\Delta v+k^2v=-\tau s & \text{ in } \Omega, \\ \nabla v \cdot \nu =b_1t_1 &  \text{ on } \partial \Omega_{\text{top}}, \\ \nabla v \cdot \nu =b_2t_2 &  \text{ on } \partial \Omega_{\text{bot}}, \\ v \text{ is outgoing.}\end{array} \right.\end{equation}
Proposition \ref{borninhomo} and \ref{inegaborn} yield the following: 
\begin{proposition} \label{born} Let $C$ and $D$ be the constants defined in Propositions \ref{directh} and \ref{directg}, and $A(\phi), B(\phi)$ defined in Proposition \ref{pcontrP}.  If $CA(\phi)+2DB(\phi)<1$ then \eqref{umod} has a unique solution $u$ and  
\begin{equation} \begin{split} \Vert u-v\Vert_{\text{H}^2(\Omega_r)}\leq \frac{CA(\phi)+2DB(\phi)}{1-CA(\phi)+2DB(\phi)}[C\Vert \tau s \Vert_{\text{L}^2(\Omega_r)}\qquad \qquad \\ \qquad \qquad +D\left(\Vert b_2t_2 \Vert_{\widetilde{\text{H}}^{1/2}(-r,r)}+\Vert b_1t_1 \Vert_{\widetilde{\text{H}}^{1/2}(-r,r)}\right)].\end{split} \end{equation}\end{proposition}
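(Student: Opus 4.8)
The plan is to recognize \eqref{umod} as a concrete instance of the abstract perturbed problem \eqref{perturbed} and then to invoke Propositions \ref{borninhomo} and \ref{inegaborn} essentially verbatim, so that no new analysis is required. First I would match the two formulations term by term. Comparing \eqref{umod} with \eqref{perturbed}, the role of the source is played by $\tau s$, the boundary data are $b_1 t_1$ and $b_2 t_2$, and the perturbing operators are
\begin{equation*}
\mathcal{S}=\Sigma,\qquad \mathcal{T}_1=\mathcal{T}_2=-\Pi,
\end{equation*}
with $\Sigma,\Pi$ as in Proposition \ref{pcontrP}. The minus sign in front of $\Pi$ is immaterial for the sequel since it does not change the operator norm.

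Next I would estimate the contraction constant $\mu$ of \eqref{hypborn}. By Proposition \ref{pcontrP} one has $\Vert\mathcal{S}\Vert_{\text{H}^2(\Omega_r)\to\text{L}^2(\Omega_r)}\leq A(\phi)$ and $\Vert\mathcal{T}_1\Vert=\Vert\mathcal{T}_2\Vert=\Vert\Pi\Vert\leq B(\phi)$, so that $\mu\leq CA(\phi)+2DB(\phi)$. Under the standing hypothesis $CA(\phi)+2DB(\phi)<1$ this gives $\mu<1$, hence Proposition \ref{borninhomo} applies and yields both existence and uniqueness of the solution $u$ of \eqref{umod} together with its Born series expansion; in particular the function $v$ is precisely the first term of that series, in the sense of Definition \ref{defiborn}.

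Finally I would apply Proposition \ref{inegaborn} with these data to obtain
\begin{equation*}
\Vert u-v\Vert_{\text{H}^2(\Omega_r)}\leq\Big(C\Vert\tau s\Vert_{\text{L}^2(\Omega_r)}+D\big(\Vert b_1 t_1\Vert_{\widetilde{\text{H}}^{1/2}(-r,r)}+\Vert b_2 t_2\Vert_{\widetilde{\text{H}}^{1/2}(-r,r)}\big)\Big)\frac{\mu}{1-\mu},
\end{equation*}
and then use that $t\mapsto t/(1-t)$ is increasing on $[0,1)$, together with $\mu\leq CA(\phi)+2DB(\phi)<1$, to bound $\mu/(1-\mu)$ by $(CA(\phi)+2DB(\phi))/\big(1-(CA(\phi)+2DB(\phi))\big)$, which is the claimed estimate.

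The argument is essentially bookkeeping, and there is no genuine analytic obstacle: all the substantive work—the continuity of $\mathcal{H}_k$ and $\mathcal{G}_k$, the continuity of $\Sigma$ and $\Pi$, and the convergence of the Born (Neumann) series—has already been carried out in the preceding propositions. The only points that require a little care are the term-by-term identification of the operators and the observation that $A(\phi),B(\phi)$ are only upper bounds for the true operator norms; this last fact is exactly why the monotonicity step is needed to pass from the sharp constant $\mu$ to the explicit quantity appearing in the statement.
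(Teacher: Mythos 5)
Your proof is correct and follows exactly the route the paper takes: the paper states this proposition as an immediate consequence of Propositions \ref{borninhomo} and \ref{inegaborn}, and your term-by-term identification ($\mathcal{S}=\Sigma$, $\mathcal{T}_1=\mathcal{T}_2=-\Pi$, source $\tau s$, boundary data $b_1t_1$, $b_2t_2$), the bound $\mu\leq CA(\phi)+2DB(\phi)<1$, and the monotonicity of $t\mapsto t/(1-t)$ on $[0,1)$ supply precisely the bookkeeping the paper leaves implicit. Note that your argument yields the denominator $1-\left(CA(\phi)+2DB(\phi)\right)$, which is the mathematically correct form; the paper's displayed denominator $1-CA(\phi)+2DB(\phi)$ is evidently missing parentheses.
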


The Born approximation leads to a problem of source inversion similar to that of section 2. Using the results proved in this section, we recover $\tau s$, $b_1t_1$ and $b_2t_2$. In the following, we study how to characterize a defect by recovering one of those functions. In the case of a bend, one can fix $b_1=b_2=0$ and reduce the inversion to the sole recovery of $\tau s$. In the case of a bump, $s=0$ and the problem reduces to the reconstruction of $b_1t_1$ and $b_2t_2$. 

\subsection{Detection of bends}
We first consider bends which are parallel portions of circular arcs, whose geometry is determined by the center and the arc-length of these arcs,
or equivalently by the distance $x_c$ where the guide starts bending, the angle $\theta$ and the radius of curvature $r$ (see Figure \ref{bend}).

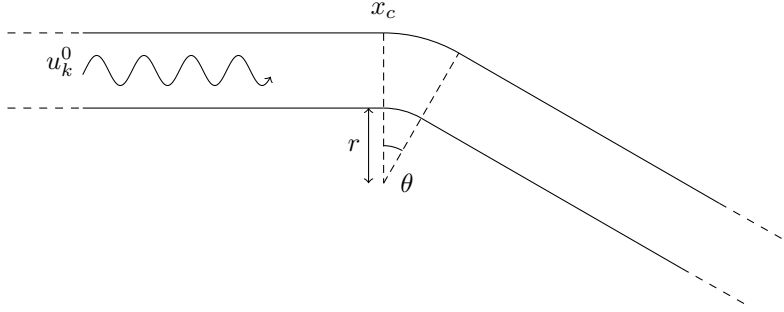
\begin{figure}[h]
\begin{center}
\begin{tikzpicture}
\draw (0,1) -- (4,1);
\draw [dashed] (-1,1) -- (0,1);
\draw [dashed] (-1,0) -- (0,0);
\draw (0,0) -- (4,0);
\draw (4,1) arc(90:60:2);
\draw (4.5,-0.134)-- ++ (3.4641,-2);
\draw (5,0.7321)-- ++ (3.4641,-2);
\draw [dashed] (8.4641,-1.2679)-- ++ (0.886,-0.5);
\draw [dashed] (7.9641,-2.134)-- ++ (0.886,-0.5);
\draw (4,0) arc(90:60:1);
\draw (-0.3,0.3) node[above]{$u_k^0$};
\draw [->] [domain=0:2.5] [samples=200] plot (\x,{0.2*sin(10*(\x+0.6) r)+0.5});
\draw  [densely dashed] (4,1) -- (4,-1);
\draw [densely dashed] (4,-1) -- (5,0.7321);
\draw (4,-0.5) arc(90:60:0.5);
\draw (4.1,-1) node[right]{$\theta$};
\draw [<->] (3.8,0) -- (3.8, -1);
\draw (3.4,-0.5) node[right]{$r$};
\draw (4,1.3) node{$x_c$};
\end{tikzpicture} \caption{\label{bend} Representation of a bend in a waveguide.} \end{center} \end{figure}

More precisely, we define the mapping $\phi$ from $\Omega$ to $\widetilde{\Omega}$ as follow: 
\begin{itemize} 
\item If $x\leq x_c$, $\phi(x,y)= (x,y)$.
\item If $x\in (x_c,x_c+\theta(r+1))$, then 
$$\phi(x,y)=\left(x_c+(r+y)\sin \left(\frac{x-x_c}{r+1}\right),-r+(r+y)\cos\left(\frac{x-x_c}{r+1}\right)\right). $$
\item If $x\geq x_c+\theta(r+1)$ then
\begin{equation*}\begin{split} \phi(x,y)=(x_c+(r+y)\sin\theta+(x-x_c-\theta(r+1))\cos\theta,-r+(r+y)\cos\theta\\-(x-x_c-\theta(r+1))\sin\theta).\end{split}\end{equation*}
\end{itemize}
The matrix $J\phi$ is orthogonal if $x\not\in (x_c,x_c+\theta(r+1)) $ and so $\tau=1$ in this range. If $x\in  (x_c,x_c+\theta(r+1))$, then 
\begin{equation*}J\phi(x,y)=\left(\begin{array}{cc} \frac{r+y}{r+1}\cos\left(\frac{x-x_c}{r+1}\right) & \sin \left(\frac{x-x_c}{r+1}\right) \\
-\frac{r+y}{r+1}\sin \left(\frac{x-x_c}{r+1}\right) & \cos\left(\frac{x-x_c}{r+1}\right)\end{array}\right),\quad \tau=\frac{r+y}{r+1},\end{equation*}
\begin{equation} S=J\phi^{-1}\left(J\phi^{-1}\right)^T\tau=\left(\begin{array}{cc} \frac{r+1}{r+y} &0 \\ 0 & \frac{r+y}{r+1} \end{array}\right).\end{equation}
Moreover, $t_1=1$ for every $x\in \R$, $t_2=1$ if $x\not\in (x_c,x_c+\theta(r+1))$ and $t_2=\frac{r}{r+1}$ otherwise. 

We assume along this section that the bend is located to the right of the section $\{0\}\times (0,1)$. We introduce a source $\tilde{s}_k=-2ik\delta_0(x)$, and we notice that $s_k=\tilde{s}_k\circ\phi=\tilde{s}_k$. In the absence of defect, the wave field generated by this source would be $u_k^{\text{inc}}:=e^{ik|x|}$. Let $u^s_k$ be the scattered wave field defined by $u^s_k:=u_k-u_k^{\text{inc}}$. Using \eqref{umod}, we notice that $u^s_k$ satisfies the equation
\begin{equation}\left\{\begin{array}{cl}
\nabla(S\nabla u^s_k)+k^2\tau u^s_k=-\tau s_k-\nabla(S\nabla u_k^{\text{inc}})-k^2du_k^{\text{inc}} & \text{ in }\Omega, \\ 
S\nabla u^s_k\cdot \nu =-S\nabla u_k^{\text{inc}}\cdot \nu & \text{ on }\partial\Omega, \\
u^s_k \text{ is outgoing.}
\end{array}\right.  \end{equation}
The fact that $S\nabla u_k^{\text{inc}}\cdot \nu=0$, and 
\begin{equation} -\tau s_k-\nabla(S\nabla u_k^{\text{inc}})-k^2du_k^{\text{inc}}=-\textbf{1}_{x\in[x_c,x_c+\theta(r+1)]}k^2e^{ikx}h_r(y), \end{equation}
 with $h_r(y)=(y-1)\left(\frac{1}{r+y}+\frac{1}{r+1}\right)$
leads to the equation 
\begin{equation} \label{bendd}\left\{\begin{array}{cl}
\nabla(S\nabla u^s_k)+k^2\tau u^s_k=-\textbf{1}_{x\in[x_c,x_c+\theta(r+1)]}k^2e^{ikx}h_r(y) & \text{ in } \Omega, \\ 
S\nabla u^s_k\cdot \nu =0 & \text{ on } \partial\Omega, \\
u^s_k \text{ is outgoing. }
\end{array}\right.  \end{equation}
Under the assumptions of Proposition \ref{born}, $u^s_k$ is close to the solution $v_k$ of 
\begin{equation}\left\{\begin{array}{cl}
\Delta v_k+k^2v_k=-\textbf{1}_{x\in[x_c,x_c+\theta(r+1)]}k^2e^{ikx}h_r(y) &  \text{ in } \Omega, \\ 
\nabla v_k\cdot \nu =0 & \text{ on } \partial\Omega, \\ v_k \text{ is outgoing.}
\end{array}\right.  \end{equation}
 
The measurements consist in the first mode $v_{k,0}$ of $v_k$ for every frequency $k\in (0,k_{\text{max}})$ where $k_{\text{max}}\in \R^*_+$ is given. To simplify the source in \eqref{bendd}, we define 
\begin{equation} \label{hry} f=\textbf{1}_{x\in[x_c,x_c+\theta(r+1)]}\int_0^1h_r(t)\dd t.\end{equation}
Proposition \ref{source} yields
\begin{equation} \label{gammabend}
v_{k,0}(0)=\frac{i}{2k}\int_0^{+\infty} k^2 f(y) e^{2iky}\dd y=2k^2 \Gamma(f)(2k) \qquad \forall k\in(0,k_{\text{max}}),
\end{equation}
which shows that we have access to $\Gamma(f)(k)$ for all $k\in (0,2k_{\text{max}})$. We denote by $d=\Gamma(f)(k)$ the data and by $d_{\text{app}}$ the perturbed data. We use the method described in section 2 to reconstruct an approximation $f_{\text{app}}$ of $f$. The error is controlled by the following: 

\begin{proposition} \label{contr}
Let $f$ and $f_{\text{app}}$ be two indicator functions supported in $(-a,a)$ where $a>0$. We assume that the size of the supports of $f$ and $f_{\text{app}}$ is greater than $\delta$. Let $k_{\text{max}}\in R^*_+$, $d(k)=\Gamma(f)(k)$ and $d_{\text{app}}(k)=\Gamma(f_{\text{app}})(k)$ defined for $k\in (0,2k_{\text{max}})$. Let $c(k)=(\int_k^{+\infty}\text{sinc}^2(x)\dd x)^{1/2}$. Then there exists a constant $M\in \R^*_+$ such that
\begin{equation} \Vert f-f_{\text{app}} \Vert_{\text{L}^2(-a,a)}^2\leq \frac{4}{\pi}\Vert d-d_{\text{app}}\Vert_\text{H}^2+Mc(\delta \,k_{\text{max}}). \end{equation}
\end{proposition}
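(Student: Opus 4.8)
The plan is to pass to the Fourier side via Plancherel and to split the frequency axis at the cut-off $2\kmax$, treating the accessible band $(0,2\kmax)$ and the missing tail $(2\kmax,+\infty)$ by completely different arguments. Setting $g:=f-f_{\text{app}}$, which is real-valued and supported in $(-a,a)$, I would first use the Plancherel identity together with the Hermitian symmetry $\F(g)(-\omega)=\overline{\F(g)(\omega)}$ to write
\[
\|f-f_{\text{app}}\|_{\text{L}^2(-a,a)}^2=\frac{1}{\pi}\int_0^{+\infty}|\F(g)(\omega)|^2\,\dd\omega
=\frac{1}{\pi}\int_0^{2\kmax}|\F(g)|^2\,\dd\omega+\frac{1}{\pi}\int_{2\kmax}^{+\infty}|\F(g)|^2\,\dd\omega .
\]

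On the band $(0,2\kmax)$ the data are available, and I would invoke the relation between $\F$ and $\Gamma$ recorded in Remark~\ref{remcite}: since $d-d_{\text{app}}=\Gamma(g)$ by linearity, one has $|\F(g)(\omega)|=2\omega\,|(d-d_{\text{app}})(\omega)|$ for $\omega>0$. Hence
\[
\frac{1}{\pi}\int_0^{2\kmax}|\F(g)(\omega)|^2\,\dd\omega=\frac{4}{\pi}\int_0^{2\kmax}\omega^2|(d-d_{\text{app}})(\omega)|^2\,\dd\omega=\frac{4}{\pi}\|d-d_{\text{app}}\|_{\text{H}}^2,
\]
which is exactly the first term of the claimed estimate, the $\text{H}$-norm being understood over the interval $(0,2\kmax)$ where the data live.

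The crux is the tail, where the hypotheses that $f$ and $f_{\text{app}}$ are indicator functions of support length at least $\delta$ become essential. For an indicator $\mathbf{1}_{[\alpha,\beta]}$ of length $L=\beta-\alpha$, the explicit formula $|\F(\mathbf{1}_{[\alpha,\beta]})(\omega)|^2=4\sin^2(\omega L/2)/\omega^2$ and the substitution $u=\omega L/2$ give
\[
\int_{2\kmax}^{+\infty}|\F(\mathbf{1}_{[\alpha,\beta]})(\omega)|^2\,\dd\omega=2L\int_{\kmax L}^{+\infty}\text{sinc}^2(u)\,\dd u=2L\,c(\kmax L)^2 .
\]
Applying this to $f$ and to $f_{\text{app}}$, combined with $|a+b|^2\le 2|a|^2+2|b|^2$, the monotonicity of $c$ (so that $c(\kmax L)\le c(\kmax\delta)$ because $L>\delta$), and the bound $L\le 2a$, controls the tail integral by $16a\,c(\delta\kmax)^2$. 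Finally, since $c$ is bounded by $c(0)=\sqrt{\pi/2}$, one has $c(\delta\kmax)^2\le\sqrt{\pi/2}\,c(\delta\kmax)$, which turns the squared factor into a single power and, after division by $\pi$, produces the second term $Mc(\delta\kmax)$ with $M=16a/\sqrt{2\pi}$.

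I expect the tail estimate to be the only real difficulty. One must exploit the precise indicator structure to identify the decay of $\F$ with the $\text{sinc}^2$ integral, and the uniform lower bound $\delta$ on the support size, via the monotonicity of $c$, is exactly what makes this tail controllable independently of the unknown positions and lengths of the supports. The passage from $c^2$ to $c$ is the only subtle bookkeeping point and is what accounts for the first power of $c$ appearing in the final bound.
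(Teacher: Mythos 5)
Your proof is correct and follows essentially the same route as the paper: the paper's own proof is only the two-line remark that $|\F(f)(k)|=2|k\Gamma(f)(k)|$ and that the Fourier transform of an indicator function is a sinc, and your argument fills in exactly those steps (Plancherel split at $2k_{\text{max}}$, the $\F$--$\Gamma$ identity on the data band, the explicit sinc tail) with the correct bookkeeping. The supporting details you supply --- monotonicity of $c$, the bound $L\le 2a$, and $c(\delta k_{\text{max}})^2\le\sqrt{\pi/2}\,c(\delta k_{\text{max}})$ to pass from $c^2$ to $c$ --- are all valid and yield an explicit constant $M=16a/\sqrt{2\pi}$.
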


\begin{proof} We notice that $|\F(f)(k)|=2|k\Gamma(f)(k)|$ and we use the fact that the Fourier transform of a indicator function is a $\text{sinc}$ function.
\end{proof}

\begin{remark} \label{rqerr}
This bound of the error of approximation highlights two different sources of error: the error due to the perturbed data, and the error due to the lack of measurements for frequencies above $2k_{\text{max}}$. The uncertainty on the measurements can lead to small perturbations of the data, but the most important source of perturbation comes from the Born approximation and the error given in Proposition \ref{born}. \end{remark}

To recover the parameters of the bend from $f$, we see that 
\begin{equation*}
\int_0^1h_r(t)\dd t=1-\frac{1}{2(r+1)}-(r+1)\ln\left(\frac{r+1}{r}\right)=-\frac{1}{r}+o_{r\rightarrow +\infty}\left(\frac{1}{r}\right).
\end{equation*}
If $r$ is large enough, we can use the approximation $1/r$ or inverse the exact expression. The values of $x_c$ and $\theta$ are then deduced from the size of $\text{supp}(f)$. 
 
To conclude, with the measurements on a section of the waveguide of the scattered field due to a source $\tilde{s}_k=-2ik\delta_0(x)$ for every frequency in $(0,k_{\text{max}})$, we are able to reconstruct an approximation of $f$ from which we can derive the parameters of the bend. Moreover, we can quantify the error of this approximation, and this error decreases as $k_{\text{max}}$ increases and as $\theta$ decreases or $r$ increases. 

\begin{remark}\label{bendouble}
This inversion can also be applied for a succession of bends, each parametrized as in Figure \ref{bend}. In this case, the function $f$ is a sum of disjoint indicator functions. Our framework could also certainly be used to reconstruct more general geometries of bends. However, the expression of $S$ is then more complicated and the source function in \eqref{bendd} may no longer reduces to indicator function.
\end{remark}

\subsection{Detection of bumps}

We now consider shape defects as those depicted in Figure \ref{bumpp}: the goal is to reconstruct the functions $g$ and $h$ that define the bump geometries, from the measurements. 

\begin{figure}[h]

\begin{center}
\begin{tikzpicture}
\draw (0,1) -- (3,1);
\draw (5,1) -- (8,1);
\draw [dashed] (-1,1) -- (0,1);
\draw [dashed] (-1,0) -- (0,0);
\draw [dashed] (3,1)--(5,1);
\draw (0,0) -- (2,0);
\draw (7,0) -- (8,0);
\draw [dashed] (2,0) -- (7,0);
\draw [dashed] (8,0) -- (9,0);
\draw [dashed] (8,1) -- (9,1);
\draw [domain=3:5] [samples=200] plot (\x,{5/16*(\x-3)^2*(\x-5)^2+1});
\draw [domain=2:7] [samples=200] plot (\x,{1/4*sin(2*(\x-2)*3.14/5 r)});
\draw [->] [domain=0:2.5] [samples=200] plot (\x,{0.2*sin(10*(\x+0.6) r)+0.5});
\draw (-0.3,0.3) node[above]{$u_k^{\text{inc}}$};
\draw[<->] (4,1) -- (4, {5/16*(4-3)^2*(4-5)^2+1});
\draw[<->] (3,0) -- (3, {4/16*(3-2)^2*(3-4)^2});
\draw (4.2,0.7) node{$h(x)$};
\draw (3,-0.3) node{$g(x)$};

\end{tikzpicture} \caption{\label{bumpp} Representation of a shape defect in a waveguide.}\end{center}\end{figure}
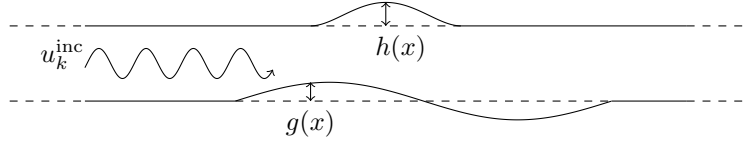

We assume that $\text{supp}(h),\text{supp}(g)$ are compact, that $1+h>g$, and that $h,g\in\mathcal{C}^2(\R)$ so Proposition~\ref{born} applies. Note that $h-1$ and $g$ do not need to be of constant sign. We define $\phi(x,y)=(x,(1+h(x)-g(x))y+g(x))$ and compute
\begin{equation}J\phi(x,y)=\left(\begin{array}{cc} 1 & 0 \\ (h'(x)-g'(x))y+g'(x) & 1+h(x)-g(x) \end{array}\right),\end{equation}
\begin{equation*} J\phi^{-1}(x,y)=\left(\begin{array}{cc} 1 & 0 \\ -\frac{(h'(x)-g'(x))y+g'(x)}{1+h(x)-g(x)} & \frac{1}{1+h(x)-g(x)} \end{array} \right).\end{equation*}
Moreover, $\tau=|\text{det}(J\phi)|=1+h(x)-g(x)$, $t_1=\sqrt{1+h'(x)^2}$, $t_2=\sqrt{1+g'(x)^2}$ and 
\begin{equation*}S=\left(\begin{array}{cc} 1+h(x)-g(x) & -(h'(x)-g'(x))y-g'(x) \\ -(h'(x)-g'(x))y-g'(x)  & \frac{\left((h'(x)-g'(x))y+g'(x)\right)^2}{1+h(x)-g(x)} +\frac{1}{1+h(x)-g(x)}\end{array}\right).\end{equation*}

Assuming that the bumps are located to the right of the section $\{0\}\times (0,1)$, we introduce a source $\tilde{s}_k=-2ik\delta_0(x)$, and notice that $s_k=\tilde{s}_k\circ\phi=\tilde{s}_k$. In the absence of defect, the wave field generated by this source would be $u_k^{\text{inc}}:=e^{ik|x|}$. Let $\tilde{u^s_k}:=\tilde{u_k}-u_k^{\text{inc}}$ be the scattered wave field which solves
\begin{equation}\left\{ \begin{array}{cl} \Delta \tilde{u_k} +k^2 \tilde{u_k} =\tilde{s}_k &  \text{ in } \widetilde{\Omega}, \\ \partial_\nu \tilde{u_k}=0 & \text{ on } \partial\widetilde{\Omega}, \\ \tilde{u_k} \text{ is outgoing.} \end{array}\right.  \end{equation}
Using the expression of $u_{k}^\text{inc}$ and the fact that if $x>0$ then $e^{ik|x|}=e^{ikx}$, $\tilde{u_k}^s$ satisfies the equation 
\begin{equation}\left\{\begin{array}{cl} \Delta \tilde{u_k}^s+k^2\tilde{u_k}^s=0 & \text{ in } \widetilde{\Omega}, \\ \partial_\nu \tilde{u_k}^s=\frac{h'(x)}{\sqrt{1+h'(x)^2}}ike^{ikx} & \text{ on } \partial\widetilde{\Omega}_{\text{top}}, \\ \partial_\nu \tilde{u_k}^s=\frac{-g'(x)}{\sqrt{1+g'(x)^2}}ike^{ikx}  & \text{ on } \partial\widetilde{\Omega}_{\text{bot}}, \\ \tilde{u_k}^s \text{ is outgoing.} \end{array} \right .\end{equation}
Transforming the deformed guide to a regular guide leads to
\begin{equation}\label{bump} \left\{\begin{array}{cl} \Delta u_k^s+k^2u_s=-\nabla\cdot(M\nabla u_k^s)-k^2\eps u_k^s &  \text{ in }\Omega, \\ \partial_\nu u_k^s= -M\nabla u_k^s \cdot \nu+h'(x)ike^{ikx} & \text{ on }\Omega_{\text{top}},\\ \partial_\nu u_k^s=-M\nabla u_k^s \cdot \nu -g'(x)ike^{ikx} & \text{ on }\Omega_{\text{bot}}, \\ u_k^s \text{ is outgoing}. \end{array} \right.\end{equation}
If the assumptions of Proposition \ref{born} are satisfied, $u_k^s$ is close to the solution $v_k$ of 
\begin{equation} \left\{\begin{array}{cl} \Delta v_k+k^2v_k=0 & \text{ in } \Omega, \\ \partial_\nu v_k=h'(x)ike^{ikx} & \text{ on } \Omega_{\text{top}},\\ \partial_\nu v_k= -g'(x)ike^{ikx}&  \text{ on } \Omega_{\text{bot}},  \\ v_k \text{ is outgoing.} \end{array} \right.\end{equation}
Given $k_{\text{max}}>0$, we measure the first mode $v_{k,0}$ of $v_k$ for all frequencies $k\in (0,k_{\text{max}})$. However, since we assumed that we can measure only propagative modes, we have access to $v_{k,1}$ the second mode of $v_k$ for all frequencies $k>\pi$, so for $k\in(\pi,k_{\text{max}})$. Using Proposition \ref{bord} and the inversion of source, we have access to 
\begin{equation}
v_{k,0}(0)=\frac{i}{2k}\int_0^{+\infty} (h'(z)-g'(z))ike^{ikz}e^{ikz}\dd z \qquad \forall k\in(0,k_{\text{max}}),
\end{equation}
\begin{equation}
v_{k,1}(0)=\frac{-i}{\sqrt{2}k_1}\int_0^{+\infty} (h'(z)+g'(z))ike^{ikz}e^{ik_1z}\dd z \qquad \forall k\in(\pi,k_{\text{max}}).
\end{equation}
We notice that 
\begin{equation} \label{gammabump1}
v_{k,0}(0)=2ik\Gamma(h'-g')(2k) \qquad \forall k\in(0,k_{\text{max}}),
\end{equation}
\begin{equation} \label{gammabump2}
v_{k,1}(0)=-\frac{\sqrt{2}ik(k_1+k)}{k_1}\Gamma(h'+g')(k+k_1) \qquad \forall k\in(\pi,k_{\text{max}}).
\end{equation}
We define $s_0=h'-g'$ and $s_1=h'+g'$. We have access to $\Gamma(s_0)(k)$ for all $k\in (0,2k_{\text{max}})$, and since $k\mapsto k+\sqrt{k^2-\pi^2}$ is one-to-one from $(\pi,k_{\text{max}})$ to $(\pi, k_{\text{max}}+\sqrt{k_{\text{max}}^2-\pi^2})$, we have access to $\Gamma(s_1)(k)$ for all $k\in (\pi,k_{\text{max}}+\sqrt{k_{\text{max}}^2-\pi^2})$. We denote by $d_0(k)=\Gamma(s_0)(k)$, $d_1(k)=\Gamma(s_1)(k)$ the data and consider the perturbed data $d_{0_{\text{app}}}$, $d_{1_{\text{app}}}$. The method described in Section 2 provides approximations $s_{0_{\text{app}}}$, $s_{1_{\text{app}}}$ which we can control by the following:

\begin{proposition} \label{contr2}
Let $s_0,s_1,s_{0_{\text{app}}}, s_{1_{\text{app}}} \in \text{H}^1(-r,r)$ where $r\in \R^*_+$. Let $k_{\text{max}}\in R^*_+$, $d_0=\Gamma(s_0)$, $d_{0_{\text{app}}}=\Gamma(s_{0_{\text{app}}})$ defined on $(0,2k_{\text{max}})$, $d_1=\Gamma(s_1)$, $d_{1_{\text{app}}}=\Gamma(s_{1_{\text{app}}})$ defined on $(\pi, k_{\text{max}}+\sqrt{k_{\text{max}}^2-\pi^2})$. Assume that there exists $M\in \R^*_+$ such that $\Vert s_i \Vert_{\text{H}^1(-r,r)}\leq M$ and $\Vert s_{i_{\text{app}}} \Vert_{\text{H}^1(-r,r)}\leq M$ for $i=0,1$. Then for every $0<\eps<1$, there exists a constant $\xi_{k_{\text{max}}}$, depending on $r,M,\eps$, such that 
\begin{equation} \Vert s_0-s_{0_{\text{app}}} \Vert_{\text{L}^2(-r,r)}^2\leq \frac{4}{\pi}\Vert d_0-d_{0_{\text{app}}}\Vert_\text{H}^2+\frac{2\pi}{k_{\text{max}}^2}M^2, \end{equation}
\begin{equation}\begin{split}
\Vert s_1-s_{1_{\text{app}}} \Vert_{\text{L}^2(-r,r)}^2\leq \xi_{k_{\text{max}}}\Vert d_1-d_{1_{\text{app}}}\Vert^{2-2\eps}_{\text{H}}&+\frac{4}{\pi}\Vert d_1-d_{1_{\text{app}}}\Vert_\text{H}^2 \\ &\quad +\frac{8\pi}{\left(k_{\text{max}}+\sqrt{k_{\text{max}}^2-\pi^2}\right)^2}M^2.\end{split}
\end{equation}
\end{proposition}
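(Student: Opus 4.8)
The plan is to treat the two estimates separately, since $s_0$ and $s_1$ carry different gap structures in their available data, and in both cases to reduce everything to the Fourier-synthesis results already established. The bridge between the modal data $d_i=\Gamma(s_i)$ and the Fourier transform is the pointwise identity $|\F(s_i-s_{i_{\text{app}}})(\omega)|=2\omega\,|(d_i-d_{i_{\text{app}}})(\omega)|$ for $\omega>0$ (as in Remark \ref{remcite}), which converts an $\text{L}^2(\omega_0,\omega_1)$ norm of the Fourier difference into $4\Vert d_i-d_{i_{\text{app}}}\Vert_{\text{H}}^2$ over the same interval. I would use this relation repeatedly to pass from $\F$-norms to $\text{H}$-norms of the data, absorbing the resulting numerical factors into the stated constants.

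For the first inequality, I observe that $d_0$ is known on $(0,2k_{\text{max}})$, so there is no low-frequency gap and only the high-frequency tail above $2k_{\text{max}}$ is missing. Using that $s_0-s_{0_{\text{app}}}$ is real and compactly supported, Plancherel gives
\[
\Vert s_0-s_{0_{\text{app}}}\Vert_{\text{L}^2(-r,r)}^2=\frac{1}{\pi}\Vert \F(s_0)-\F(s_{0_{\text{app}}})\Vert_{\text{L}^2(0,+\infty)}^2,
\]
and I split the integral at $2k_{\text{max}}$. On $(0,2k_{\text{max}})$ the identity above turns the contribution into $\frac{4}{\pi}\Vert d_0-d_{0_{\text{app}}}\Vert_{\text{H}}^2$. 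On $(2k_{\text{max}},+\infty)$, the $\text{H}^1$ regularity yields $\F(s_0-s_{0_{\text{app}}})(\omega)=(i\omega)^{-1}\F((s_0-s_{0_{\text{app}}})')(\omega)$; bounding $\omega^{-2}\leq(2k_{\text{max}})^{-2}$, extending the integral to $(0,+\infty)$, and using $\Vert(s_0-s_{0_{\text{app}}})'\Vert_{\text{L}^2(-r,r)}\leq \Vert s_0\Vert_{\text{H}^1}+\Vert s_{0_{\text{app}}}\Vert_{\text{H}^1}\leq 2M$ produces the remainder term of order $M^2/k_{\text{max}}^2$, exactly the tail estimate used in the proof of Theorem \ref{therr}.

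For the second inequality, the data $d_1$ is available precisely on $(\pi,k_{\text{max}}+\sqrt{k_{\text{max}}^2-\pi^2})$, so both a low-frequency gap below $\pi$ and a high-frequency tail are present. This is exactly the setting of Theorem \ref{therr} with $\omega_0=\pi$ and $\omega_1=k_{\text{max}}+\sqrt{k_{\text{max}}^2-\pi^2}$. I would apply that theorem to $f=s_1$, $f_{\text{app}}=s_{1_{\text{app}}}$, obtaining a bound in terms of $\Vert\F(s_1)-\F(s_{1_{\text{app}}})\Vert_{\text{L}^2(\omega_0,\omega_1)}$ in the two powers $2-2\eps$ and $2$, plus the tail $8M^2/\omega_1^2$. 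Converting each Fourier norm into $\Vert d_1-d_{1_{\text{app}}}\Vert_{\text{H}}$ via the factor-of-two identity then leaves $\frac{4}{\pi}\Vert d_1-d_{1_{\text{app}}}\Vert_{\text{H}}^2$ for the quadratic term and collects the constant $\frac{(8\pi M^2)^\eps}{\pi}\xi^2 4^{1-\eps}$ into a single $\xi_{k_{\text{max}}}$ multiplying $\Vert d_1-d_{1_{\text{app}}}\Vert_{\text{H}}^{2-2\eps}$.

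The main obstacle is essentially already resolved upstream: all the delicate analysis---the analyticity of the Fourier transform of a compactly supported function, the Hilbert-matrix conditioning, and the resulting Hölder-type stability with the potentially huge constant $\xi$---is encapsulated in Lemma \ref{lemma1} and Theorems \ref{ther}--\ref{therr}. Here the only genuine care is bookkeeping: correctly identifying the two distinct gap geometries (high-frequency only for $s_0$, versus both gaps for $s_1$, the latter arising from the change of variables $k\mapsto k+\sqrt{k^2-\pi^2}$ that maps the measurable band onto $(\pi,\omega_1)$), and tracking the factor relating $\Gamma$ to $\F$ so that the final constants land on the $\text{H}$-norm of the data rather than on $\Vert\F(\cdot)\Vert_{\text{L}^2}$.
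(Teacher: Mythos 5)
Your proposal is correct and follows essentially the same route as the paper, which simply applies Theorem \ref{therr} together with Remark \ref{remcite} (using $|\F(f)(\omega)|=2|\omega\,\Gamma(f)(\omega)|$) first with $\omega_0=0$, $\omega_1=2k_{\text{max}}$ and then with $\omega_0=\pi$, $\omega_1=k_{\text{max}}+\sqrt{k_{\text{max}}^2-\pi^2}$. Your direct Plancherel-plus-tail argument for $s_0$ is just the unrolled content of that theorem in the gap-free case $\omega_0=0$ (where Remark \ref{remcite} makes the H\"older term vanish), so the two proofs coincide in substance, your constants being in fact slightly sharper than the stated ones.
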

\begin{proof} Noticing that $|\F(h')(k)|=2|k\Gamma(h')(k)|$, we apply Theorem \ref{therr} and Remark \ref{remcite} with $\omega_0=0$ and $\omega_1=2k_{\text{max}}$, and then $\omega_0=\pi$ and $\omega_1=k_{\text{max}}+\sqrt{k_{\text{max}}^2-\pi^2}$.
\end{proof}
\begin{remark} \label{rqerr2}
This estimate highlights the different sources of error: the error due to the perturbed data, the error due to the lack of measurements at high frequencies, and due to the lack of measurements for the low frequencies of $s_1$. Note that the error diminishes if $K$ increases and if the bump gets smaller. Numerical illustrations can be found in Section 4.4. \end{remark}

\subsection{Detection of inhomogeneities}

This case is different from the two previous cases, as the presence of an  inhomogeneity affects the index of the medium and leads to changes in the homogeneous Helmholtz equation:
\begin{equation} \Delta u+k^2(1+h(x,y))u=0. \end{equation}
We assume that $\text{supp}(h)$ is compact and that the inhomogeneity is located to the right of the section $\{0\}\times(0,1)$. To detect the defect, we introduce a source $s_k=-2ik\delta_0(x)$. In the absence of defect, the wave field generated by this source would be $u_k^{\text{inc}}:=e^{ik|x|}$. Let $u^s_k$ be the scattered wave field defined by $u^s_k:=u_k-u_k^{\text{inc}}$. We know that $u_k$ satisfies the equation \eqref{directsource}, and so
\begin{equation} \label{eqinhomo} \left\{\begin{array}{cl} \Delta u^s_k+k^2u^s_k=-k^2hu^\text{inc}_k-k^2hu^{\text{inc}}_k &  \text{ in } \Omega, \\
\partial_\nu u^s_k=0 &  \text{ on } \partial\Omega, \\
u^s_k \text{ is outgoing}. &
\end{array} \right.\end{equation} 
Let $\mathcal{S}(u):=k^2hu$ which satisfies the hypothesis of Proposition \ref{borninhomo}, and for every $r>0$, 
\begin{equation}\Vert \mathcal{S} \Vert_{\text{H}^2(\Omega_r)\rightarrow\text{L}^2(\Omega_r)} \leq k^2\Vert h \Vert_{\text{L}^\infty(-r,r)}.\end{equation}
 Proposition \ref{borninhomo} shows that if $k^2\Vert h \Vert_{\text{L}^\infty(-r,r)}$ is small enough, $u^s_k$ is close to $v_k$ the solution of 
\begin{equation} \left\{\begin{array}{cl} \Delta v_k+k^2v_k=-k^2hu_k^{\text{inc}} &  \text{ in } \Omega, \\
\partial_\nu v_k=0 &  \text{ on } \partial\Omega, \\
v_k \text{ is outgoing}. &
\end{array} \right.\end{equation} 
and that, with $C$ the constant defined in Proposition \ref{directg},
\begin{equation} 
\Vert u-v\Vert_{\text{H}^2(-r,r)}\leq \frac{C^2 k^4\Vert h\Vert^2_{\text{L}^\infty(-r,r)}\Vert u_k^{\text{inc}}\Vert_{\text{L}^2(-r,r)}}{1-Ck^2\Vert h\Vert_{\text{L}^\infty(-r,r)}}.\end{equation}

We assume that the measurements consist in the $n$-th propagative mode $v_{k,n}$ for all frequencies $k\in (0,k_{\text{max}})$ where $k_{\text{max}}>0$ is given. Proposition \ref{source} shows that for every $k\in (0,k_{\text{max}})$,
\begin{equation} \label{gammainhomo}
v_{n,k}(0)=\frac{i}{2k_n} \int_{0}^{+\infty} k^2h_n(z)e^{ikz}e^{ik_nz}\dd z=\frac{(k+k_n)k^2}{k_n}\Gamma(h_n)(k+k_n).
\end{equation}
Since we assume that only the propagative modes are measured, the frequency $k$ must satisfy $k>n\pi$, and $k_n\in \R$. The function $k\mapsto k+\sqrt{k^2-n^2\pi^2}$ is one-to-one from $(n\pi,k_{\text{max}})$ to $(n\pi,k_{\text{max}}+\sqrt{k_{\text{max}}^2-n^2\pi^2})$. This means that we have access to $\Gamma(h_n)(k)$ for every $k\in (n\pi,k_{\text{max}}+\sqrt{k_{\text{max}}^2-n^2\pi^2})$. We denote by $d_n=\Gamma(h_n)$ the data and by $d_{n_{\text{app}}}$ the perturbed data. We use the method described in section 2 to reconstruct $h_{n_{\text{app}}}$, an approximation of $h_n$, and control the error using Theorem~\ref{therr}. 

\begin{proposition} \label{contr3}
Let $n\in \N$, $h_{n_{\text{app}}}, h_n\in \text{H}^1(-r,r)$ where $r>0$. Let $k_{\text{max}}>n\pi$, $d_n(k)=\Gamma(h_n)(k)$ and $d_{n_{\text{app}}}(k)=\Gamma(h_{n_{\text{app}}})(k)$ for $k\in(n\pi,k_{\text{max}}+\sqrt{k_{\text{max}}^2-n^2\pi^2})$. We assume that there exists $M\in \R^*_+$ such that $\Vert h_n\Vert_{\text{H}^1(-r,r)}\leq M$, $\Vert h_{n_{\text{app}}} \Vert_{\text{H}^1(-r,r)}\leq M$. Then for every $0<\eps<1$ there exists a constant $\xi_{n,k_{\text{max}}}$ depending on $r,M,\eps$ such that 
\begin{equation}\begin{split} 
\Vert h_n-h_{n_{\text{app}}}\Vert_{\text{L}^2(-r,r)}^2\leq \xi_{n,k_{\text{max}}}\Vert d_n-d_{n_{\text{app}}}\Vert^{2-2\eps}_{\text{H}}+\frac{4}{\pi}\Vert d_n-d_{n_{\text{app}}}\Vert_\text{H}^2\hspace{2cm}\\ +\frac{8\pi}{\left(k_{\text{max}}+\sqrt{k_{\text{max}}^2-n^2\pi^2}\right)^2}M^2.\end{split} 
\end{equation}
\end{proposition}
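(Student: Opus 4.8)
The plan is to recognize this statement as a direct instance of Theorem~\ref{therr} applied to the pair $f=h_n$, $f_{\text{app}}=h_{n_{\text{app}}}$, exactly along the route already used for Proposition~\ref{contr2}. The only genuine inputs are the normalization relating $\Gamma$ to the Fourier transform $\F$, and the identification of the precise frequency window on which the modal data is available.

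First I would fix $\omega_0=n\pi$ and $\omega_1=k_{\text{max}}+\sqrt{k_{\text{max}}^2-n^2\pi^2}$; since $k_{\text{max}}>n\pi$ we indeed have $0<\omega_0<\omega_1$. As recorded from \eqref{gammainhomo} together with the fact that $k\mapsto k+\sqrt{k^2-n^2\pi^2}$ is a bijection from $(n\pi,k_{\text{max}})$ onto $(\omega_0,\omega_1)$, the data $d_n=\Gamma(h_n)$ and $d_{n_{\text{app}}}=\Gamma(h_{n_{\text{app}}})$ are known exactly on $(\omega_0,\omega_1)$, so this is the interval on which Theorem~\ref{therr} must be invoked. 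Both $h_n$ and $h_{n_{\text{app}}}$ belong to $\text{H}^1(-r,r)$ with norm at most $M$, so the hypotheses of the theorem are met and I obtain a bound on $\Vert h_n-h_{n_{\text{app}}}\Vert_{\text{L}^2(-r,r)}^2$ in terms of $\Vert\F(h_n)-\F(h_{n_{\text{app}}})\Vert_{\text{L}^2(\omega_0,\omega_1)}$ and the tail $\tfrac{8}{\omega_1^2}M^2$.

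Next I would convert the two $\text{L}^2$ terms, written through $\F$, into the $\text{H}$-norm of the data. Because $h_n$ and $h_{n_{\text{app}}}$ are real and supported in $(0,+\infty)$, the relation in Remark~\ref{remcite} gives $|\F(h_n)(\omega)-\F(h_{n_{\text{app}}})(\omega)|=2\omega\,|d_n(\omega)-d_{n_{\text{app}}}(\omega)|$, whence
\begin{equation*}
\Vert \F(h_n)-\F(h_{n_{\text{app}}})\Vert_{\text{L}^2(\omega_0,\omega_1)}^2 = 4\int_{\omega_0}^{\omega_1}\omega^2\,|d_n-d_{n_{\text{app}}}|^2\,\dd\omega \leq 4\Vert d_n-d_{n_{\text{app}}}\Vert_\text{H}^2,
\end{equation*}
the last step merely extending the integral to all of $\R_+^*$. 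Substituting into the conclusion of Theorem~\ref{therr}: the second $\text{L}^2$ term yields $\tfrac{4}{\pi}\Vert d_n-d_{n_{\text{app}}}\Vert_\text{H}^2$; the first term yields $\Vert d_n-d_{n_{\text{app}}}\Vert_\text{H}^{2-2\eps}$ (using $\Vert\cdot\Vert^{2-2\eps}=(\Vert\cdot\Vert^2)^{1-\eps}$) up to a constant depending on $r,M,\eps$ and, through $\omega_0,\omega_1$, on $n$ and $k_{\text{max}}$, all of which I collect into $\xi_{n,k_{\text{max}}}$; finally $\tfrac{8}{\omega_1^2}M^2\leq\tfrac{8\pi}{\omega_1^2}M^2$ produces the stated residual term with $\omega_1=k_{\text{max}}+\sqrt{k_{\text{max}}^2-n^2\pi^2}$.

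There is no serious obstacle here: once Theorem~\ref{therr} is in hand the argument is essentially bookkeeping. The only points requiring care are tracking the factor $2\omega$ in the $\Gamma$-to-$\F$ passage so that the constant $\tfrac{4}{\pi}$ comes out correctly, and confirming that the available data interval is genuinely $(\omega_0,\omega_1)$ rather than a proper sub-interval. It is this last point that forces the low-frequency gap $\omega_0=n\pi$ to grow with the mode index $n$, and hence makes the constant $\xi_{n,k_{\text{max}}}$ deteriorate as $n$ increases, which is the real content hidden behind the otherwise routine estimate.
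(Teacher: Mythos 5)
Your proposal is correct and follows exactly the paper's route: the paper proves Proposition~\ref{contr3} by invoking Theorem~\ref{therr} with $\omega_0=n\pi$ and $\omega_1=k_{\text{max}}+\sqrt{k_{\text{max}}^2-n^2\pi^2}$, using the identity $|\F(h_n)(k)|=2|k\,\Gamma(h_n)(k)|$ to pass from $\F$-norms to $\text{H}$-norms, just as you do (and as in Proposition~\ref{contr2}). Your bookkeeping of the constants ($4/\pi$ factor, absorption of $4^{1-\eps}$ and the $\omega_0,\omega_1$ dependence into $\xi_{n,k_{\text{max}}}$, and $8/\omega_1^2\leq 8\pi/\omega_1^2$) is accurate, so there is nothing to add.
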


\begin{corollary} \label{corocontr}
Let $k_{\text{max}}\in \R^*_+$ and $N\in \N$ such that $N<k_{\text{max}}/\pi$. Let $h_{\text{app}},h\in \text{H}^1(\Omega_r)$ where $r>0$ and $d=F(h)$, $d_{\text{app}}=F_s(h_{\text{app}})$ such that $d_n$ and $d_{n_{\text{app}}}$ are defined on $(n\pi,k_{\text{max}}+\sqrt{k_{\text{max}}^2-n^2\pi^2})$. We assume that there exists $M\in \R^*_+$ such that $\Vert h\Vert_{\text{H}^1(\Omega_r)}\leq M$ and $\Vert h_{\text{app}} \Vert_{\text{H}^1(\Omega_r)}\leq M$. Then for every $0<\eps<1$ there exists a constant $\xi_{N,k_{\text{max}}}$ depending on $r,M,\eps$ such that
\begin{equation}\begin{split}
\Vert h-h_{\text{app}}\Vert_{\text{L}^2(\Omega_r)}^2\leq \xi_{N,k_{\text{max}}}(N+1)^\eps \Vert d-d_{\text{app}}\Vert_{\ell^2(H)}^{2-2\eps}+\frac{4}{\pi}\Vert d-d_{\text{app}}\Vert^2_{\text{l}^2(H)}\\
+\frac{8\pi (N+1)}{K^2}M^2+\frac{4}{N^2\pi^2}M^2.\end{split}
\end{equation}
\end{corollary}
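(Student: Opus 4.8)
The plan is to reduce the global $\text{L}^2(\Omega_r)$ error to the mode-by-mode estimate of Proposition \ref{contr3} via Parseval's identity, and then to sum the contributions, distinguishing the measured modes $n\le N$ from the unmeasured tail $n>N$. Since $(\varphi_n)_{n\in\N}$ is an orthonormal basis of $\text{L}^2(0,1)$, Fubini's theorem gives $\Vert h-h_{\text{app}}\Vert_{\text{L}^2(\Omega_r)}^2=\sum_{n\in\N}\Vert h_n-h_{n_{\text{app}}}\Vert_{\text{L}^2(-r,r)}^2$, so it suffices to bound each term and add them up.

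For the tail $n>N$ no data is available, and I would rely solely on the \emph{a priori} regularity. Differentiating the modal expansion in $y$ and using that $(\sqrt2\,\sin(n\pi\,\cdot))_{n\ge1}$ is orthonormal in $\text{L}^2(0,1)$ yields $\sum_{n\ge1}(n\pi)^2\Vert h_n\Vert_{\text{L}^2(-r,r)}^2=\Vert\partial_y h\Vert_{\text{L}^2(\Omega_r)}^2\le M^2$, and similarly for $h_{\text{app}}$. Hence $\sum_{n>N}\Vert h_n-h_{n_{\text{app}}}\Vert_{\text{L}^2(-r,r)}^2\le \frac{2}{(N+1)^2\pi^2}\big(\Vert\partial_y h\Vert_{\text{L}^2(\Omega_r)}^2+\Vert\partial_y h_{\text{app}}\Vert_{\text{L}^2(\Omega_r)}^2\big)\le \frac{4}{N^2\pi^2}M^2$, which is the last term of the statement.

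For the head $0\le n\le N$ I would apply Proposition \ref{contr3} to every mode and sum the three resulting terms. Writing $a_n:=\Vert d_n-d_{n_{\text{app}}}\Vert_{\text{H}}$ and setting $\xi_{N,\kmax}:=\max_{0\le n\le N}\xi_{n,\kmax}$, the concave term is handled by Hölder's inequality with exponents $\tfrac{1}{1-\eps}$ and $\tfrac{1}{\eps}$: $\sum_{n=0}^N a_n^{2-2\eps}\le (N+1)^\eps\big(\sum_{n=0}^N a_n^2\big)^{1-\eps}\le (N+1)^\eps\Vert d-d_{\text{app}}\Vert_{\ell^2(\text{H})}^{2-2\eps}$, which produces the factor $\xi_{N,\kmax}(N+1)^\eps$. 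The quadratic term sums directly to at most $\tfrac{4}{\pi}\Vert d-d_{\text{app}}\Vert_{\ell^2(\text{H})}^2$, and for the model-error term I would use $\kmax+\sqrt{\kmax^2-n^2\pi^2}\ge \kmax$ to bound each summand by $8\pi M^2/\kmax^2$, so that summing $N+1$ of them gives $8\pi(N+1)M^2/K^2$ (with $K=\kmax$). Adding the head and tail contributions yields the claimed estimate.

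The main obstacle is the Hölder step: the per-mode bound of Proposition \ref{contr3} is sublinear in the data error (exponent $2-2\eps<2$), so a naive summation would not reproduce the $\ell^2(\text{H})$-norm appearing in the statement; the factor $(N+1)^\eps$ is precisely the cost of passing from the mode-wise $\ell^{2-2\eps}$-type sum to $\Vert d-d_{\text{app}}\Vert_{\ell^2(\text{H})}$. A secondary point is to check that $\xi_{n,\kmax}$ is nondecreasing in $n$ — which holds since the low-frequency gap $\omega_0=n\pi$ grows and the available band-edge $\omega_1=\kmax+\sqrt{\kmax^2-n^2\pi^2}$ shrinks as $n$ increases (see Remark \ref{remcite}) — so that the maximum defining $\xi_{N,\kmax}$ is attained at $n=N$.
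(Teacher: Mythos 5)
Your proposal is correct and follows essentially the same route as the paper: Parseval in the $\varphi_n$ basis, Proposition \ref{contr3} applied mode by mode for $n\le N$ with $\xi_{N,\kmax}:=\max_{0\le n\le N}\xi_{n,\kmax}$, the $(N+1)^\eps$ factor from Hölder (the paper phrases this as concavity of $x\mapsto x^{1-\eps}$, which is the same inequality), the bound $\kmax+\sqrt{\kmax^2-n^2\pi^2}\ge\kmax$ for the model-error term, and the tail $n>N$ controlled by $\Vert\partial_y(h-h_{\text{app}})\Vert_{\text{L}^2(\Omega_r)}^2\le 4M^2$. Your closing remark about monotonicity of $\xi_{n,\kmax}$ in $n$ is unnecessary — the maximum over the finite set $\{0,\dots,N\}$ exists regardless — but it is harmless.
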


\begin{proof} Using the previous proposition,
\begin{equation*}\begin{split}
\Vert h-h_{\text{app}}\Vert_{\text{L}^2(\Omega_r)}^2\leq \sum_{n=0}^N \xi_{n,k_{\text{max}}}\Vert d_n-d_{n_{\text{app}}}\Vert^{2-2\eps}_{\text{H}}+\frac{4}{\pi}\Vert d_n-d_{n_{\text{app}}}\Vert_\text{H}^2\qquad \qquad \\\qquad +\frac{8\pi M^2}{\left(k_{\text{max}}+\sqrt{k_{\text{max}}^2-n^2\pi^2}\right)^2}+\sum_{n>N}\Vert h_n-h_{n_{\text{app}}}\Vert^2_{\text{L}^2(-r,r)}.\end{split}
\end{equation*}
We define $\xi_{N,k_{\text{max}}}=\max_{n=0,..,N}\xi_{n,k_{\text{max}}}$ and using the concavity of $x\mapsto x^{1-\eps}$, we deduce that 
\begin{equation*}\begin{split}
\Vert h-h_{\text{app}}\Vert_{\text{L}^2(\Omega_r)}^2\leq \xi_{N,k_{\text{max}}}(N+1)^\eps \Vert d-d_{\text{app}}\Vert_{\ell^2(H)}^{2-2\eps}+\frac{4}{\pi}\Vert d-d_{\text{app}}\Vert^2_{\ell^2(H)}\\+\frac{8\pi M^2 (N+1)}{k_{\text{max}}^2}+\frac{\Vert\partial_y(h-h_{\text{app}})\Vert^2_{\text{L}^2(\Omega_r)}}{N^2\pi^2}.\end{split}
\end{equation*}
We conclude using the upper bound on $\Vert h\Vert_{\text{H}^1(\Omega_r)}$ and $\Vert h_{\text{app}}\Vert_{\text{H}^1(\Omega_r)}$. 
\end{proof}

\begin{remark}Again, this estimate highlights the different sources of error: the lack of measurements if the mode if greater than $1$ in the low frequencies, the perturbed data, the lack of measurements in the high frequencies and finally the truncation to the $N$-th mode.  The predominant term here seems to be the first one, and we need to find a balance between increasing $N$ to decrease the error of truncation and diminishing $N$ to lower the value of $\xi_{N,k_{\text{max}}}$. 
\end{remark}

Unlike the two previous cases, the detection of inhomogeneities requires more modes than just the first two modes. However, using measurements on one section of the scattered field associated with a source $s_k=-2ik\delta_0(x)$ allows reconstruction of an approximation of $h$ with quantified error.

\section{Numerical Results}

\subsection{Numerical source inversion from limited frequency data}

In Proposition \ref{invgamma}, we have seen that the forward modal operator $\Gamma$ is inversible. Knowing the measurements of the wavefield generated by a source for every frequency, we are theoretically abble to reconstruct the source. Moreover, Theorem \ref{therr} shows that if the source is compactly supported, measurements are only needed for a finite interval of frequencies to approximate the source. In this section, we discuss the numerical aspects of the inversion. 

We assume that the wavefield in the waveguide is generated by a source $f$ compactly supported, located between the sections $x=x_m$ and $x=x_M$. The interval $[x_m,x_M]$ is regularly discretized by a set of $N_X$ values $X$, and seek an approximation of $f(X)$. The measurements of the wavefield are made for a discrete set of $N_K$ frequencies denoted $K$. Let $h=\frac{x_M}{N_X-1}$ denote the stepsize of the discretization $X$. Using Definition \ref{defigamma} and Equation \eqref{defgamma}, the operator $f\mapsto (k\mapsto k\Gamma(f)(k))$ maps $\text{L}^2(\R)$ onto $\text{L}^2(\R^*_+)$ and can be discretized by the operator
\begin{equation} \gamma: \begin{array}{rcl} \C^{N_X} & \rightarrow & \C^{N_K} \\ y & \mapsto & \left(\frac{ih}{2}\displaystyle\sum_{x\in X}y_xe^{ikx}\right)_{k\in K} \end{array}.\end{equation}

To invert this operator, we use a least square method. Given the data $d=\gamma(f(X))$, we seek an approximation of $f(X)$ by minimizing the quantity 
\begin{equation*}
\frac{1}{2} \Vert \gamma(y)-d\Vert_{\ell^2\left(\C^{N_X}\right)}^2.
\end{equation*}
To avoid small oscillations in the reconstruction we also define the discrete gradient

\begin{equation} G : \begin{array}{rcl}
\C^{N_X} & \rightarrow & \C^{N_X} \\ y & \mapsto & \left(y_i-y_{i-1}\right)_{1\leq i \leq N_X} 
\end{array}, \end{equation}
with the convention that $x_0=x_{N_X}$ and $x_{N_X+1}=x_1$. Note that the adjoints of $\gamma$ and $G$, denoted by $\gamma^*$ and $G^*$, can be easily computed. For $\lambda>0$, we minimize the quantity
\begin{equation} J(y)=\frac{1}{2}\Vert \gamma(y)-d\Vert_{\ell^2\left(\C^{N_K}\right)}^2+\frac{\lambda}{2}\Vert G(y)\Vert_{\ell^2\left(\C^{N_X}\right)}^,.\end{equation}
with a steepest descent method, with the initialization $y_0=~(0)_{x\in X}$:
\begin{equation} \label{algo} y_{m+1}=y_m-\frac{\Vert \nabla J(y_m)\Vert^2_{\ell^2\left(\C^{N_X}\right)}}{\Vert S(\nabla J(y_m))\Vert_{\ell^2\left(\C^{N_K}\right)}^2+\Vert G(\nabla J(y_m))\Vert_{\ell^2\left(\C^{N_X}\right)}^2}\nabla J(y_m),\end{equation}
where 
\begin{equation}
\nabla J(y_m)=\gamma^*(\gamma(y_m)-d)+\lambda G^*(G(y_m)).
\end{equation}

We use this algorithm to illustrate the results given in Theorem \ref{therr}. Firstly, we reconstruct a source $f$ with a gap in the high frequencies, \textit{i.e.} for which measurements of the wavefield generated by $f$ are available for a discrete set of frequencies between $0$ and $\omega_1$. Figure \ref{cuthf} presents the comparison between a function $f$ and its reconstruction for different values of $\omega_1$. As expected, we observe convergence when $\omega_1$ increases, and the reconstruction becomes almost perfect visually. The speed of convergence is illustrated in Figure \ref{cuthferr}, and as expected from Theorem \ref{therr}, the $\text{L}^2$-error between the function and its approximation decreases like $1/\omega_1$.

\begin{figure}[h!]
\centering
\resizebox{4.8in}{!}{\input{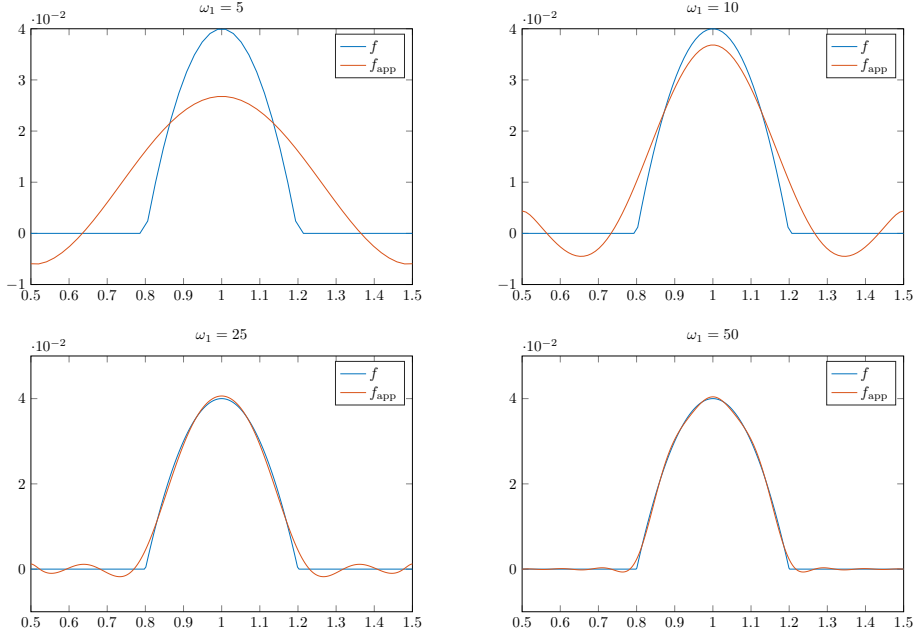}}
\caption{\label{cuthf} Reconstruction of $f(x)=(x-0.8)(1.2-x)\textbf{1}_{0.8\leq x\leq 1.2}$ for different values of $\omega_1$ using the discrete operator $\gamma$ and the algorithm \eqref{algo} with $\lambda=0.001$. Here, $X$ is the discretization of $[0.5,1.5]$ with $10\omega_1$ points, and $K$ is the discretization of $[0.01,\omega_1]$ with $1000$ points.}
\end{figure}

\begin{figure}[h!]
\centering
\scalebox{.55}{% This file was created by matlab2tikz.
%
%The latest updates can be retrieved from
%  http://www.mathworks.com/matlabcentral/fileexchange/22022-matlab2tikz-matlab2tikz
%where you can also make suggestions and rate matlab2tikz.
%
\definecolor{mycolor1}{rgb}{0.00000,0.44700,0.74100}%
\begin{tikzpicture}

\begin{loglogaxis}[%
width=5.694in,
height=2.683in,
at={(0in,0in)},
scale only axis,
grid=minor,
xmin=3,
xmax=60,
ymin=0.0001,
ymax=0.02,
xlabel style={font=\color{white!15!black}},
xlabel={$\omega_1$},
ylabel style={font=\color{white!15!black}},
ylabel={$\Vert f-f_{\text{app}}\Vert_2$},
axis background/.style={fill=white},
legend style={legend cell align=left, align=left, draw=white!15!black}
]
\addplot [color=mycolor1]
  table[row sep=crcr]{%
3	0.0102507715285982\\
4.96551724137931	0.0083689547605327\\
6.93103448275862	0.00534282216102071\\
8.89655172413793	0.00460969899221156\\
10.8620689655172	0.00288749666555148\\
12.8275862068966	0.00189465294664698\\
14.7931034482759	0.00169539939495101\\
16.7586206896552	0.00159003400080123\\
18.7241379310345	0.00159294150255294\\
20.6896551724138	0.00155332704253738\\
22.6551724137931	0.00138800917143074\\
24.6206896551724	0.00120016419649199\\
26.5862068965517	0.00105722571433215\\
28.551724137931	0.00086834676866764\\
30.5172413793103	0.00075056184522202\\
32.4827586206897	0.000735065199598808\\
34.448275862069	0.000737249600843856\\
36.4137931034483	0.000730912238858634\\
38.3793103448276	0.000701236580641188\\
40.3448275862069	0.00064482132322914\\
42.3103448275862	0.000560133144945395\\
44.2758620689655	0.000501087764458307\\
46.2413793103448	0.000467149698138688\\
48.2068965517241	0.000446776592657412\\
50.1724137931034	0.000447703775102475\\
52.1379310344828	0.000446834217839432\\
54.1034482758621	0.000433499956414707\\
56.0689655172414	0.000403011818516453\\
58.0344827586207	0.000375943048470542\\
60	0.000342039162098546\\
};
\addlegendentry{$\Vert f-f_{\text{app}}\Vert_2$}

\addplot [color=black, dashed]
  table[row sep=crcr]{%
3	0.0165956894559546\\
4.96551724137931	0.0100265623796393\\
6.93103448275862	0.00718320886899529\\
8.89655172413793	0.00559622086305447\\
10.8620689655172	0.00458357137354938\\
12.8275862068966	0.00388124995340875\\
14.7931034482759	0.00336555940015863\\
16.7586206896552	0.00297083329767089\\
18.7241379310345	0.00265897786863362\\
20.6896551724138	0.00240637497111342\\
22.6551724137931	0.00219760271334559\\
24.6206896551724	0.00202216384127178\\
26.5862068965517	0.00187266534716998\\
28.551724137931	0.0017437499790677\\
30.5172413793103	0.00163144065838198\\
32.4827586206897	0.00153272291153721\\
34.448275862069	0.00144527025292098\\
36.4137931034483	0.00136725850631445\\
38.3793103448276	0.00129723718119322\\
40.3448275862069	0.00123403844672483\\
42.3103448275862	0.00117671147731708\\
44.2758620689655	0.00112447428556702\\
46.2413793103448	0.00107667783942435\\
48.2068965517241	0.00103277895755941\\
50.1724137931034	0.000992319575716876\\
52.1379310344828	0.000954910702822787\\
54.1034482758621	0.000920219874230755\\
56.0689655172414	0.000887961243953293\\
58.0344827586207	0.000857887690236515\\
60	0.000829784472797732\\
};
\addlegendentry{$-1$ slope}

\end{loglogaxis}
\end{tikzpicture}%}
\caption{\label{cuthferr} $\text{L}^2$-error between $f(x)=(x-0.8)(1.2-x)\textbf{1}_{0.8\leq x\leq 1.2}$ and its reconstruction $f_{app}$ for different values of $\omega_1$ using the discrete operator $\gamma$ and the algorithm \eqref{algo} with $\lambda=0.001$. Here, $X$ is the discretization of $[0.5,1.5]$ with $10\omega_1$ points, and $K$ is the discretization of $[0.01,\omega_1]$ with $1000$ points.}
\end{figure}

Secondly, we investigate the influence of $r$ and $\omega_0$ in Theorem \ref{therr}. Consistently with Propositions \ref{contr2} and \ref{contr3}, we choose $\omega_0$ to be a multiple of $\pi$. In Figure \ref{cutbfcut}, we present the comparison between a 1D function $f$ and its reconstruction is represented for different values of $\omega_0$, when the support of $f$ is fixed. Figure \ref{cutbfr} depicts the comparison between a 1D function $f$ and its reconstruction for different sizes $r$ of support when $\omega_0$ is fixed. As expected from the definition of the constant $C$ in Theorem \ref{therr}, the quality of the reconstruction deteriorates when $r$ and $\omega_0$ increase.

\begin{figure}[h!]
\centering
\resizebox{4.8in}{!}{\input{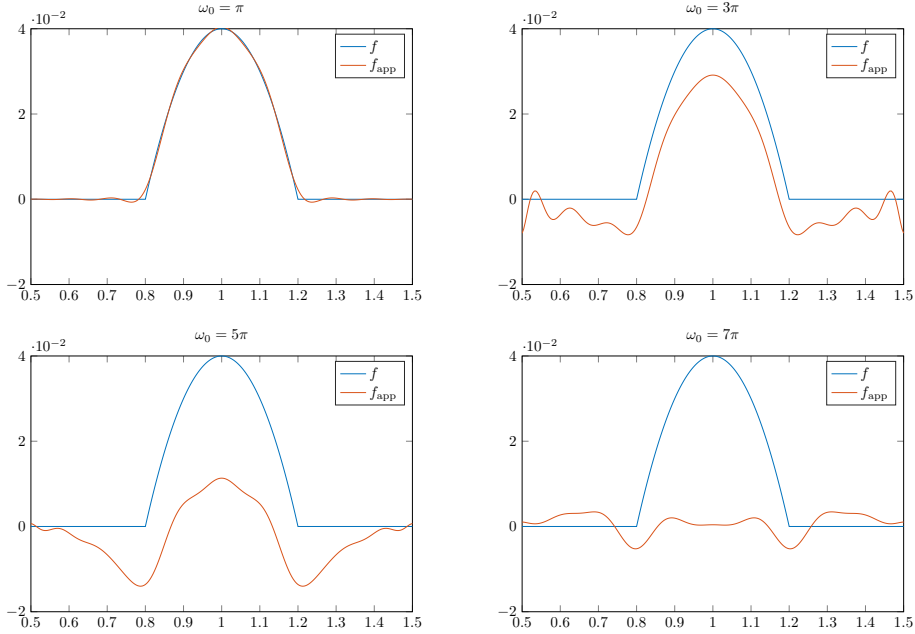}}
\caption{\label{cutbfcut} Reconstruction of $f(x)=(x-0.8)(1.2-x)\textbf{1}_{0.8\leq x\leq 1.2}$ for different values of $\omega_0$ and $r=0.5$ using the discrete operator $\gamma$ and the algorithm \eqref{algo} with $\lambda=0.001$. Here, $X$ is the discretization of $[0.5,1.5]$ with $251$ points, and $K$ is the discretization of $[\omega_0,50]$ with $1000$ points.}
\end{figure}

\begin{figure}[h!]
\centering
\resizebox{4.8in}{!}{\input{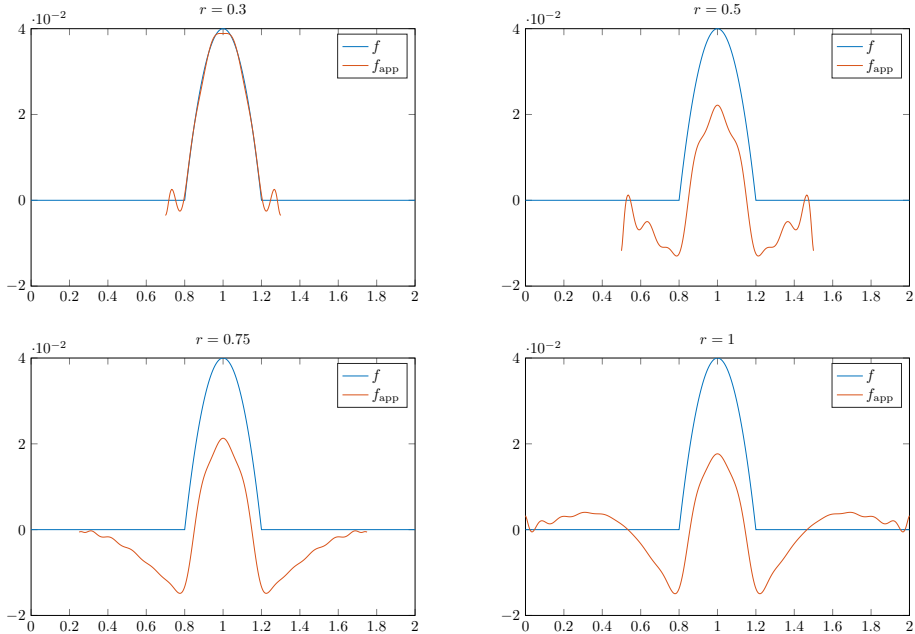}}
\caption{\label{cutbfr} Reconstruction of $f(x)=(x-0.8)(1.2-x)\textbf{1}_{0.8\leq x\leq 1.2}$ for different sizes of support $r$ and $\omega_0=3\pi$ using the discrete operator $\gamma$ and the algorithm \eqref{algo} with $\lambda=0.001$. Here, $X$ is the discretization of $[1-r,1+r]$ with $500r+1$ points, and $K$ is the discretization of $[3\pi,50]$ with $1000$ points.}
\end{figure}

The reconstruction of a source is almost perfect for $\omega_0=0$ if we increase sufficiently $\omega_1$. However, if $\omega_0>0$, the problem is ill-conditioned and if the size of the support of the source or $\omega_0$ increase, the quality of the reconstruction is poor.

\subsection{Generation of data for the detection of defects}

Applying the results of Section 3 requires measurements generated by a defect on a section of the wavefield. This data is generated by solving numerically the PDE with Matlab, and evaluating its solution on a section of the waveguide. The equations of propagation in a regular waveguide $\Omega$ for a bend, a bump and a inhomogeneity are given by \eqref{bendd}, \eqref{bump} and \eqref{eqinhomo} respectively. In the following, we  assume that the interesting part of the waveguide is located between $x=0$ and $x=8$, and that the measurements are made on the section $\{1\}\times (0,1)$. To generate the solution of these equations of propagation on $[0,8]\times [0,1]$, we use the finite element method and a perfectly matched layer between $x=-19$ and $x=0$  on the left side of the waveguide and between $x=8$ and $x=27$ on the right side. The coefficient of absorption for the perfectly matched layer is defined by $-k((x-8)\textbf{1}_{x\geq 8}-x\textbf{1}_{x\leq 0})$. The structured mesh is built with a stepsize \textcolor{red}{$0.01$.} 

\subsection{Detection of bends}

Using the method described in the previous subsection, we generate the solution of \eqref{bendd} for a set of frequencies $K$ and we evaluate the solutions on the section $\{1\}\times (0,1)$. As explained in section 3.2 and equation \eqref{gammabend}, the corresponding data amounts to knowing $\Gamma(s)(2k)$ for every $k\in K$, where 
\begin{equation} 
s=\textbf{1}_{x\in [x_c,x_c+\theta(r+1)]}\left(1-\frac{1}{2(r+1)}-(r+1)\ln\left(\frac{r+1}{r}\right)\right). \end{equation}
Note that algorithm \eqref{algo} could be used to construct an approximation $s_{\text{app}}$ of $s$. However, since we are looking for a rectangular function, we can directly define $s_{\text{app}}=-p_1\textbf{1}_{x\in[p_2,p_2+p_3]}$ and see that 
\begin{equation}\label{bendapp}\Gamma(s_{\text{app}})(k)= -\frac{ip_1}{k}e^{ik\frac{2p_2+p_3}{2}}\sin\left(\frac{p_3}{2}k\right).\end{equation}
We determine $(p_1,p_2,p_3)$ by minimizing $\Vert \Gamma(s_{\text{app}})(2k)-\Gamma(s)(2k)\Vert_{\ell^2\left(\C^{N_K}\right)}$, and the approximations of $x_c$, $r$ and $\theta$ follow. We present in Figure \ref{bendrec} the reconstructions of two different bends, and in Table \ref{tableau} the relative error on the estimation of $(x_c,r,\theta)$ for different bends. We note that if the bend is really small, the reconstruction is very good. On the other hand, when $r$ increases  or when $\theta$ decreases, the reconstruction deteriorates due to the fact that the Born approximation is no longer a good approximation of the wavefield in the waveguide. As mentionned in Remark \ref{bendouble}, our algorithm can also be used to recover a succession of bends, as shown in Figure \ref{bendrec2}.

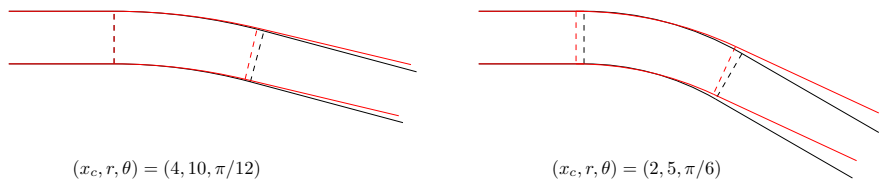
\begin{figure}[h]
\begin{center}
\scalebox{.7}{\begin{tikzpicture}
\draw (1,1) -- (3,1);
\draw (1,0) -- (3,0);
\draw [dashed] (3,0) -- (3,1);
\draw [dashed] (5.5882,-0.3407)--(5.8470,0.6252);
\draw (3,1) arc(90:75:11);
\draw (5.5882,-0.3407)-- ++ (2.8978,-0.7765);
\draw (5.8470,0.6252)-- ++ (2.8978,-0.7765);
\draw (3,0) arc(90:75:10);
\draw [color=red](1,1) -- (3,1);
\draw [color=red](1,0) -- (3,0);
\draw [color=red] [dashed] (3,0) -- (3,1);
\draw [color=red](3,1) arc(90: 76.61033:11.7460);
\draw [color=red] [dashed](5.4885,-0.2921) --(5.7200,0.6807); 
\draw [color=red](5.4885,-0.2921)-- ++ (2.9185,-0.6947);
\draw [color=red](5.7200,0.6807)-- ++ (2.9185,-0.6947);
\draw [color=red](3,0) arc(90: 76.6103:10.7460);
\draw (4,-2) node{$(x_c,r,\theta)=(4,10,\pi/12)$};
\end{tikzpicture}} \hspace{5mm}  \scalebox{0.7}{
\begin{tikzpicture}

\draw (0,1) -- (2,1);
\draw (0,0) -- (2,0);
\draw [dashed] (2,0)--(2,1); 
\draw (2,1) arc(90:60:6);
\draw [dashed] (5,0.1962)--(4.5,-0.6699); 
\draw (5,0.1962)-- ++ (2.5981,-1.5);
\draw (4.5,-0.6699)-- ++ (2.5981,-1.5);
\draw (2,0) arc(90:60:5);
\draw [color=red](0,1) -- (2,1);
\draw [color=red](0,0) -- (2,0);
\draw [color=red] [dashed] (1.8485,1)--(1.8485,0) ; 
\draw [color=red](1.8485,1) arc(90: 65.0778: 7.1882);
\draw [color=red] [dashed]  (4.8775,0.3306)--(4.4561,-0.5762); 
\draw [color=red](4.8775,0.3306)-- ++ (2.7206,-1.2642);
\draw [color=red](4.4561,-0.5762)-- ++ (2.7206,-1.2642);
\draw [color=red](1.8485,0) arc(90:65.0778: 6.1882);
\draw (3,-2) node{$(x_c,r,\theta)=(2,5,\pi/6)$};

\end{tikzpicture}}
\caption{\label{bendrec} Reconstruction of two different bends. The black lines represent the initial shape of $\Omega$, and the red the reconstruction of $\Omega$. In both cases, $K$ is the discretization of $[0.01,40]$ with $100$ points, and the reconstruction is obtain by \eqref{bendapp}. On the left, the initial parameters of the bend are $(x_c,r,\theta)=(4,10,\pi/12)$ and on the right, $(x_c,r,\theta)=(2,5,\pi/6)$. } \end{center} \end{figure}

\begin{table}[h]
\begin{center}
\begin{tabular}{|c|c|c|c|} \hline  $(x_c,r,\theta)$ & $(2.5,40,\pi/80)$ & $(4,10,\pi/12)$ &  $(2,5,\pi/6)$ \\ \hline relative error on $x_c$ & $1.8\%$ & $0\%$  &$7.6\%$ \\ \hline relative error on $r$ & $3.0\%$ & $7.5\%$ & $23.8\%$ \\ \hline relative error on $\theta$ & $1.6\%$ & $10.7\%$ & $16.9\%$\\  \hline \end{tabular} \vspace{3mm}
\end{center}
\caption{\label{tableau} Relative errors on the reconstruction of $(x_c,r,\theta)$ for different bends. In each case, $K$ is the discretization of $[0.01,40]$ with $100$ points, and the reconstruction is obtain by \eqref{bendapp}.}
\end{table}

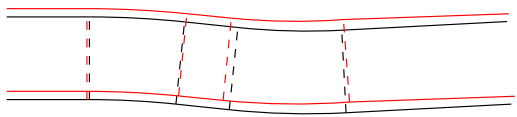
\begin{figure}[h]
\begin{center}
\scalebox{1.1}{\begin{tikzpicture}
\draw (1,1) -- (2,1);
\draw (1,0) -- (2,0);

\draw (2,1) arc(90:84:11);
\draw [dashed] (2,0) --(2,1); 
\draw [dashed] (3.0453,-0.0548)-- (3.1498,0.9397); 
\draw (3.0453,-0.0548)-- ++ (0.6445,-0.0677);
\draw (3.1498,0.9397)-- ++ (0.6445,-0.0677);
\draw (2,0) arc(90:84:10);
\draw [dashed]  (3.6898,-0.1225) -- (3.7943,0.8720); 
\draw (3.6898,-0.1225) arc (-96:-87:9) node (a){} -- ++(1.9973,0.1047); 
\draw (3.7943,0.8720) arc (-96:-87:8) node (b) {} -- ++(1.9973,0.1047); 
\draw [dashed] (a.center)--(b.center);

\draw [color=red](1,1.1) -- (1.97,1.1);
\draw [color=red](1,0.1) -- (1.97,0.1);
\draw [color=red] [dashed] (1.97,0) --(1.97,1); 
\draw [color=red](1.97,0.1) arc (90:84.2704:11.1) node (c){} -- ++ (0.5373,-0.0539); 
\draw [color=red] (1.97,1.1) arc (90:84.2704:12.1) node(d) {}-- ++ (0.5373,-0.0539); 
\draw [color=red] [dashed] (3.6155, -0.0094) -- (3.7153,0.9857); 
\draw [color=red] (3.6155, -0.0094) arc(-95.7296:-85.7888:8.82) node(e) {}--++ (1.9988,0.0700); 
\draw [color=red] (3.7153,0.9857) arc (-95.7296:-85.7888:7.82) node(f) {} --++ (1.9988,0.0700); 
\draw [color=red] [dashed] (c.center)--(d.center); 
\draw [color=red] [dashed] (e.center)--(f.center); 
%\draw [color=red] (4.9655,-0.2268) -- ++(2.7084, -0.4747);
%\draw [color=red] ( 5.2379,0.7533) -- ++ (2.7084, -0.4747);
\end{tikzpicture}} \caption{\label{bendrec2} Reconstruction of a waveguide with two successive bends. The black lines represent the initial shape of $\Omega$, and the red the reconstruction of $\Omega$, slightly shifted for comparison purposes. In both cases, $K$ is the discretization of $[0.01,40]$ with $100$ points. The parameters of the two bends are $(x_c^{(1)},r^{(1)},\theta^{(1)})=(2,10,\pi/30))$ and $(x_c^{(2)},r^{(2)},\theta^{(2)})=(3.8,8,-\pi/20))$} \end{center} \end{figure}

\subsection{Detection of bumps}

Using the method described in section 4.2, we generate the solutions of \eqref{bump} for a set of frequencies $K$ and we evaluate the solutions on the section $\{1\}\times (0,1)$. In view of Remark \ref{distpi}, and to ensure that the Born hypothesis \eqref{hypborn} is satisfied, we do not choose frequencies in $[n\pi-0.2,n\pi+0.2]$, for every $n\in \N$.  As explained in Section 3.3 and equations \eqref{gammabump1}, \eqref{gammabump2}, the data only determines $\Gamma(s_0)(2k)$ for every $k\in K$ and $\Gamma(s_1)(k+\sqrt{k^2-\pi^2})$ for every $k\in K$, $k>\pi$, where $h$ and $g$ paramatrize the bump (recall that $s_0=h'+g'$, $s_1=-\sqrt{2}h'+g'$). Using the algorithm \eqref{algo}, we find an approximation of $h'$ and $g'$, and the approximation of $h$ and $g$ follows by integration. In figure \ref{testbump1}, we represent two different reconstructions of a shape defect. As predicted  in Proposition \ref{born}, the reconstruction improves when $\Vert h\Vert_{\mathcal{C}^1(\R)}$ and $\Vert g \Vert_{\mathcal{C}^1(\R)}$ decrease. Table \ref{tableau2} illustrates this point as it depicts the relative error on a reconstruction of $h$ when its amplitude increases.  

\begin{figure}[h]
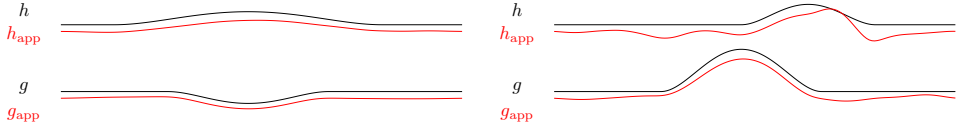

\centering
\scalebox{0.7}{\input{testbump1}}\hspace{3mm} \scalebox{0.7}{\input{testbump2}} 
\caption{\label{testbump1} Reconstruction of two shape defects. In black, the initial shape of $\Omega$, and in red the reconstruction, slightly shifted for comparison purposes. In both cases, $K$ is the discretization of $[0.01,70]\setminus \{[n\pi-0.2,n\pi+0.2], n\in \N\}$ with $300$ points, $X$ is the discretization of $[3,4.5]$ with $151$ points and we use the algorithm \eqref{algo} with $\lambda=0.08$ to reconstruct $s_0$ and $s_1$. On the left, $h(x)=\frac{5}{16}\textbf{1}_{3.2\leq x\leq 4.2}(x-3.2)^2(4.2-x)^2$ and $g(x)=-\frac{35}{16}\textbf{1}_{3.4\leq x\leq 4}(x-3.4)^2(4-x)^2$. On the right, $h(x)=\frac{125}{16}\textbf{1}_{3.7\leq x\leq 4.2}(x-3.7)^2(4.2-x)^2$ and $g(x)=\frac{125}{16}\textbf{1}_{3.4\leq x\leq 4}(x-3.4)^2(4-x)^2$.}
\end{figure}

\begin{table}[h]
\begin{center}
\begin{tabular}{|c|c|c|c|c|} \hline  $A$ & $0.1$ & $0.2$ &  $0.3$ & $0.5$ \\ \hline $\Vert h-h_{\text{app}}\Vert_{\text{L}^2(\R)}/\Vert h\Vert_{\text{L}^2(\R)}$ & $8.82\%$ & $10.41\%$  &$15.12\%$ & $54.99\%$\\ \hline \end{tabular} \vspace{5mm}
\end{center}
\caption{\label{tableau2} Relative errors on the reconstruction of $h$ for different amplitudes $A$. We choose $h(x)=A\textbf{1}_{3\leq x\leq 5}(x-3)^2(5-x)^2$ and $g(x)=0$. In every reconstruction, $K$ is the discretization of $[0.01,40]\setminus \{[n\pi-0.2,n\pi+0.2], n\in \N\}$ with $100$ points, $X$ is the discretization of $[1,7]$ with $601$ points and we use the algorithm \eqref{algo} with $\lambda=0.08$ to reconstruct $h'$.}
\end{table}

\subsection{Detection of inhomogeneities}

Using the method described in section 4.2, we generate the solutions of \eqref{eqinhomo} for a set of frequencies $K$ and we evaluate the solutions on the section $\{1\}\times (0,1)$. As explained in section 3.4 and equation \eqref{gammainhomo}, the data only determines $\Gamma(h_n)(k+k_n)$ for every $k\in K$, $k>n\pi$ where $h_n$ is the $n$-th mode of $h$, and $h$ is the inhomogeneity. We define a number of modes $N$ used for the recontruction of $h$, and with the algorithm \eqref{algo}, we find an approximation of $h_n$ for every $n\leq N$. In Figure \ref{testinhomopetit}, we show the reconstruction of $h_n$ for $0\leq n\leq N=9$. We obtain an approximation of $h$ by using the expression $h(x,y)=\sum_{n\in \N} h_n(x)\varphi_n(y)$. Figures \ref{testinhomo1} and \ref{testinhomo2} show two reconstructions of $h$. In the first one, $h$ has a small support and is very well reconstructed. In the latter, the support of $h$ is larger. And albeit it does not yield a good approximation of $h$, it allows localization of the inhomogeneity in the waveguide. Moreover, if we assume that $h$ is a positive function, we can improve the algorithm \eqref{algo} by reconstructing $h$ on each step and projecting on the space of positive functions (see the third part of Figure \ref{testinhomo2}). 

\begin{figure}[!h]
\centering
\resizebox{4.8in}{!}{\input{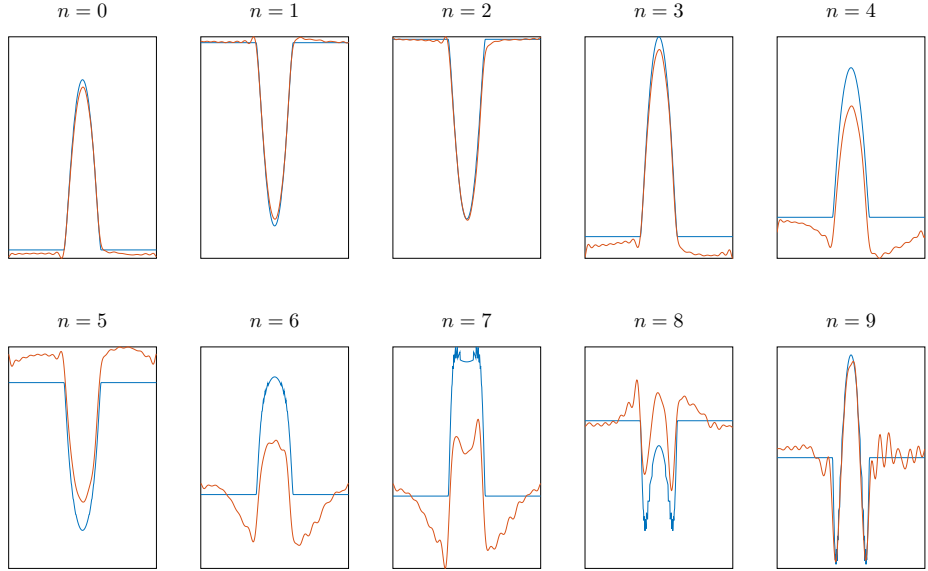}}
\caption{\label{testinhomopetit} Recontruction of $h_n$ for $0\leq n\leq 9$, where $h(x)=0.05\textbf{1}_{\left|\left(\frac{x-4}{0.05},\frac{y-0.6}{0.15}\right)\right|\leq 1}\left|\left(\frac{x-4}{0.05},\frac{y-0.6}{0.15}\right)\right|^2$. In blue, we represent $h_n$ and in red the reconstruction of $h_{n_{\text{app}}}$. In every reconstruction, $K$ is the discretization of $[0.01,150]$ with $200$ points, $X$ is the discretization of $[3.8,4.2]$ with $101$ points and we use the algorithm \eqref{algo} with $\lambda=0.002$ to reconstruct every $h_n$.}
\end{figure}

\begin{figure}[!h]
\centering
\resizebox{4.8in}{!}{
\begin{tikzpicture}[scale=1]
\begin{axis}[width=5cm, height=3cm, axis on top, scale only axis, xmin=3.8, xmax=4.2, ymin=0, ymax=1, colorbar,point meta min=-0.005,point meta max=0.05, title={$h$}]
\addplot graphics [xmin=3.8,xmax=4.2,ymin=0,ymax=1]{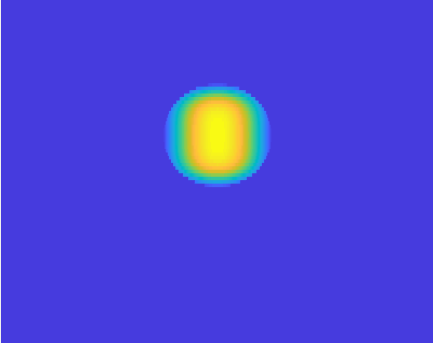};
\end{axis}
\end{tikzpicture}\begin{tikzpicture}[scale=1]
\begin{axis}[width=5cm, height=3cm, axis on top, scale only axis, xmin=3.8, xmax=4.2, ymin=0, ymax=1, colorbar,point meta min=-0.005,point meta max=0.05, title={$h_{\text{app}}$}]
\addplot graphics [xmin=3.8,xmax=4.2,ymin=0,ymax=1]{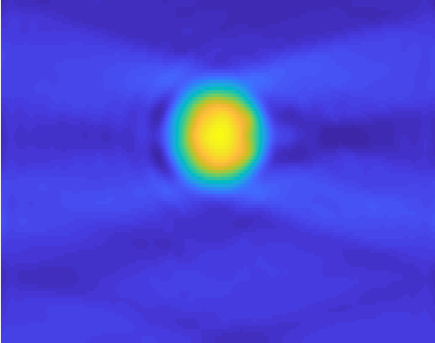};
\end{axis}
\end{tikzpicture}}
\caption{\label{testinhomo1} Recontruction of an inhomogeneity $h$, where $h(x)=0.05\textbf{1}_{\left|\left(\frac{x-4}{0.05},\frac{y-0.6}{0.15}\right)\right|\leq 1}\left|\left(\frac{x-4}{0.05},\frac{y-0.6}{0.15}\right)\right|^2$. On the left, we represent the initial shape of $h$, and on the right the reconstruction $h_{\text{app}}$. Here, $K$ is the discretization of $[0.01,150]$ with $200$ points, $X$ is the discretization of $[3.8,4.2]$ with $101$ points and we use the algorithm \eqref{algo} with $\lambda=0.002$ to reconstruct every $h_n$. We used $N=20$ modes to reconstruct $h$.}
\end{figure}

\begin{figure}[h!]
\centering
\resizebox{4.8in}{!}{
\begin{tikzpicture}[scale=1]
\begin{axis}[width=10cm, height=3cm, axis on top, scale only axis, xmin=3, xmax=6, ymin=0, ymax=1, colorbar,point meta min=-0.015,point meta max=0.07, title={$h$}]
\addplot graphics [xmin=3,xmax=6,ymin=0,ymax=1]{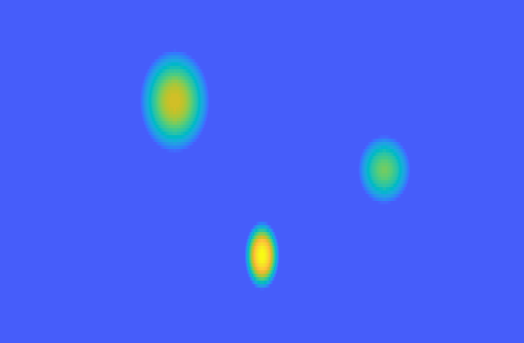};
\end{axis}
\end{tikzpicture}}

\resizebox{4.8in}{!}{\begin{tikzpicture}[scale=1]
\begin{axis}[width=10cm, height=3cm, axis on top, scale only axis, xmin=3, xmax=6, ymin=0, ymax=1, colorbar,point meta min=-0.015,point meta max=0.03, title={$h_{\text{app}}$}]
\addplot graphics [xmin=3,xmax=6,ymin=0,ymax=1]{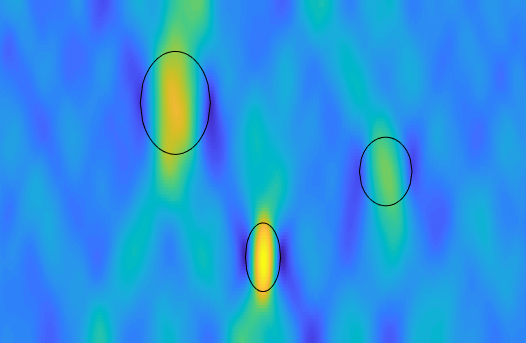};
\end{axis}
\end{tikzpicture}}

\resizebox{4.8in}{!}{\begin{tikzpicture}[scale=1]
\begin{axis}[width=10cm, height=3cm, axis on top, scale only axis, xmin=3, xmax=6, ymin=0, ymax=1, colorbar,point meta min=-0.015,point meta max=0.04, title={$h_{\text{app}}$, $h_{\text{app}}\geq 0$}]
\addplot graphics [xmin=3,xmax=6,ymin=0,ymax=1]{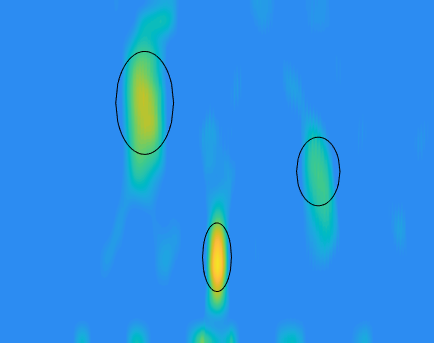};
\end{axis}
\end{tikzpicture}}
\caption{\label{testinhomo2} Recontruction of an inhomogeneity $h$. From top to bottom, the initial representation of $h$, the reconstruction $h_{\text{app}}$ and the reconstruction $h_{\text{app}}$ with the knowledge of the positivity of $h$. Here, $K$ is the discretization of $[0.01,150]$ with $200$ points, $X$ is the discretization of $[3,6]$ with $3001$ points and we use the algorithm \eqref{algo} with $\lambda=0.01$ to reconstruct every $h_n$. We choose used $N=20$ modes to reconstruct $h$.}
\end{figure}

\section{Conclusion}

In this paper, we present a new approach to recover defects in a waveguide. By sending the first propagative mode for frequencies in a given interval, the scattered wave field generated by the defects are measured on a slice of the waveguide. Based on the Fourier transform and the Born approximation, we propose a method to reconstruct the parameters of the defect. We provide a control of the error in the approximation of the parameters of the defects if they are ‘‘small’’ enough so that the Born approximation makes sense.

Our numerical results show that the method works well for the three types of defects considered : bends, bumps, localized inhomogeneities. From measurements generated by a finite element method, we were able to numerically recover the different types of defects using the modal decomposition and a penalized least square algorithm. Our reconstruction of inhomogeneities is similar to the one presented in \cite{dediu1}. While the number of propagative modes sent in the waveguide can be increased, for the method presented in this work, to improve the reconstruction, so can we increase the number of frequencies. 

Our work could be extended to other types of defects such as impenetrable obstacles or cracks in the waveguide. One could also try to apply this multi-frequency point of view to elastic waveguides, where a modal decomposition in terms of Lamb waves is also available. 

\section*{Appendix A: Proof of Proposition \ref{source} and \ref{bord}}
We begin with the proof of Proposition \ref{source}. Let $r>0$, $\Omega_r=(-r,r)\times (0,1)$ and $T$ the application defined by
\begin{equation*}T:\begin{array}{rcl} 
\text{L}^2(\Omega_r) &\rightarrow & \text{L}^2(-r,r)^\N \\
u & \mapsto & \left(\displaystyle\int_0^1 u(x,y) \varphi_n(y) \dd y \right)_{n\in \N}\end{array} .\end{equation*}
Let $\text{H}_1$ denote the Hilbert space 
\begin{equation*} \text{H}_1:=\left\{(u_n)\in \ell^2(\text{H}^1(-r,r)), \sum_{n\in \N} n^2 \Vert u_n \Vert_{\text{L}^2(-r,r)}^2<+\infty\right\},\end{equation*}
equipped with the inner product 
\begin{equation*}\langle (u_n),(v_n)\rangle_{\text{H}_1}=\sum_{n\in \N}(1+n^2\pi^2)\langle u_n,v_n\rangle_{\text{L}^2(-r,r)}+\sum_{n\in \N}\langle u_n',v_n'\rangle_{\text{L}^2(-r,r)}.\end{equation*}
The mapping $T$ is a Hilbert isomorphism between $\text{H}^1(\Omega_r)$ and $\text{H}_1$. 

The variation formulation of \eqref{directsource} takes the form, for every $v\in \text{H}^1(\Omega_r)$, 
\begin{equation} \label{varform}
\int_{\Omega_r} \nabla u_k \nabla v-k^2\int_{\Omega_r}u_kv=\int_{\Omega_r} sv.
\end{equation}
Set $T(u_k)=(u_{k,n})_{n\in \N}$, $T(v)=(v_n)_{n\in \N}$, and notice that the above variational formulation is equivalent to the sequence of problems
\begin{equation} \label{green}
\forall v_n\in \text{H}^1(-r,r), \qquad \int_{-r}^r u_{k,n}'v_n'+(n^2\pi^2-k^2)\int_{-r}^r u_{k,n}v_n=\int_{-r}^r s_nv_n.
\end{equation}
Setting $k_n^2=k^2-n^2\pi^2$ with $\text{Re}(k_n)>0$, $\text{Im}(k_n)>0$, the formulation \eqref{green} is associated to the equation $u_{k,n}''+k_n^2 u_{k,n}=s_n$. We notice that $G_{k_n}(x)=\frac{i}{2k_n}e^{ik_n|x|}$ satisfies the equation $G_{k_n}''+k_n^2G_{k_n}=-\delta_0$ and so we define $u_k:=T^{-1}\left((G_{k_n}\ast s_n)_{n\in \N}\right)$ and note that $u_k\in \text{H}^1(\Omega_r)$ and satisfies \eqref{varform}. Moreover, $u_k$ is outgoing. Finally, using results from elliptic regularity theory (see \cite{grisvard1}) we deduce that $u_k\in \text{H}^2(\Omega_r)$. As this result holds for every $r>0$, we conclude that $u_k\in \text{H}^2\loc(\Omega)$.

The solution of \eqref{directbord} is constructed by the same method. The variational formulation gives for every 
$v\in \text{H}^1(\Omega_r)$, 
\begin{equation} \label{varform2}
\int_{\Omega_r} \nabla u_k \nabla v-k^2\int_{\Omega_r}u_kv-\int_{\Omega_{\text{top}}}b_1v-\int_{\Omega_{\text{bot}}}b_2v=0. \end{equation}
This formulation is equivalent to 
\begin{equation*}
\int_{-r}^r u_{k,n}'v_n'+(n^2\pi^2-k^2)\int_{-r}^r u_{k,n}v_n=\int_{-r}^r (b_1\varphi_n(1)+b_2\varphi_n(0))v_n \quad \forall n\in \N.
\end{equation*}
The function $u_k:=T^{-1}\left((G_{k_n}\ast (b_1\varphi_n(1)+b_2\varphi_n(0)))_{n\in\N}\right)$ is in $\text{H}^1(\Omega_r)$, is outgoing and satisfies \eqref{varform2}. From the elliptic regularity theory, we deduce that $u_k\in \text{H}^2(\Omega_r)$. 

To prove uniqueness for both problems, we notice that $u_k$ satisfies $\Delta u_k+k^2u_k=0$ in $\Omega$. Thus, $u_k$ is a classical solution and $u_k\in \mathcal{C}^\infty(\Omega)$ and can be written as a linear combination of $(x,y)\mapsto \varphi_n(y)e^{\pm ik_n x}$. The outgoing caracter of $u_k$ shows that $u_k=0$ if $s=0$ or $b_1=b_2=0$.

\section*{Appendix B: Proof of Proposition \ref{directh} and \ref{directg}}
We begin with the proof of proposition \ref{directh}. Using the same notation as in Appendix A, the function $G_{k_n}$ satisfies 
\begin{equation*}
\Vert G_{k_n}\Vert_{\text{L}^1(-2r,2r)}\leq\left\{\begin{array}{cl}\frac{2r}{k_n} & \text{ if } n<k/\pi, \\ \frac{1}{|k_n|}\min\left(\frac{1}{|k_n|},2r\right) & \text{ if } n>k/\pi, \end{array}\right. \end{equation*}
\begin{equation*} \Vert G_{k_n}'\Vert_{\text{L}^1(-2r,2r)}\leq\left\{\begin{array}{cl}2r & \text{ if } n<k/\pi, \\ \min\left(\frac{1}{|k_n|}, 2r\right) & \text{ if } n>k/\pi. \end{array}\right. \end{equation*}

We define $\delta=\min_{n\in \N}\left(\sqrt{|k^2-n^2\pi^2|}\right)$, and apply the Young inequality to $u_{k,n}$:
\begin{equation*}
\Vert u_{k,n} \Vert_{\text{L}^2(-r,r)}\leq \Vert G_{k_n}\Vert_{\text{L}^1(-2r,2r)}\Vert s_n \Vert_{\text{L}^2(-r,r)}.
\end{equation*}
This leads to 
\begin{equation*} \Vert u_k \Vert_{\text{L}^2(\Omega_r)}^2\leq \frac{4r^2}{\delta^2} \sum_{n\in\N}\Vert s_n\Vert_{\text{L}^2(-r,r)}^2=\frac{4r^2}{\delta^2}\Vert s\Vert_{\text{L}^2(\Omega_r)}^2.\end{equation*}
Applying the Young inequality to $u_{k,n}'$, we get
\begin{equation*}\begin{split} \Vert \nabla u_k \Vert_{\text{L}^2(\Omega_r)}^2\leq 4r^2\sum_{n<k/\pi}\left(1+\frac{n^2\pi^2}{k_n^2}\right)\Vert s_n\Vert_{\text{L}^2(-r,r)}^2\hspace{3cm} \\ \hspace{3cm}+4r^2\sum_{n>k/\pi}\left(1+\frac{n^2\pi^2}{|k_n|^2}\right)\Vert s_n\Vert^2_{\text{L}^2(-r,r)}.\end{split}\end{equation*}
If $N$ is the largest propagative mode and if $n>k/\pi>N$, 
\begin{equation*} 1+\frac{n^2\pi^2}{|k_n|^2}\leq 1+\frac{(N+1)^2\pi^2}{|k_{N+1}|^2}\leq 1+\frac{(N+1)^2\pi^2}{\delta^2},\end{equation*}
so that
\begin{equation*} \Vert \nabla u_k \Vert_{\text{L}^2(\Omega_r)}^2\leq 4r^2\left(1+\frac{(k+\pi)^2}{\delta^2}\right)\Vert s\Vert_{\text{L}^2(\Omega_r)}^2. \end{equation*}
Finally, we notice that 
\begin{equation*}\begin{split}
\Vert \nabla^2 u_k \Vert_{\text{L}^2(\Omega_r)}^2=\sum_{n\in \N} n^4\pi^4\Vert u_{k,n} \Vert^2_{\text{L}^2(-r,r)}+\sum_{n\in \N} 2n^2\pi^2\Vert u_{k,n}'\Vert^2_{\text{L}^2(-r,r)} \qquad \qquad \\ \qquad +\sum_{n\in \N} \Vert u_{k,n}''\Vert_{\text{L}^2(-r,r)}^2,\end{split}
\end{equation*}
and that 
\begin{equation*} \Vert u_{k,n}''\Vert_{\text{L}^2(-r,r)}^2=\Vert -s_n-k_n^2u_{k,n}\Vert_{\text{L}^2(-r,r)}^2\leq \left(\Vert s_n \Vert_{\text{L}^2(-r,r)}+k_n^2\Vert u_{k,n}\Vert_{\text{L}^2(-r,r)}\right)^2.\end{equation*}
Combining both relations yields 
\begin{equation*}\begin{split}\Vert \nabla^2 u \Vert_{\text{L}^2(\Omega_r)}^2\leq \sum_{n<k/\pi}\left(\frac{4r^2n^4\pi^4}{k_n^2}+8n^2\pi^2r^2+(1+2k_nr)^2\right)\Vert s_n\Vert_{\text{L}^2(-r,r)}^2 \qquad \qquad \\ \qquad \qquad 
+\sum_{n>k/\pi}\left[\frac{n^4\pi^4}{|k_n|^4}+\frac{2n^2\pi^2}{|k_n|^2}+\left(1+\frac{|k_n|^2}{|k_n|}\min\left(\frac{1}{|k_n|},2r\right)\right)^2\right]\Vert s_n\Vert^2_{\text{L}^2(-r,r)}.\end{split}\end{equation*}
If $N$ is the largest propagative mode, and if $n>k/\pi$, 
\begin{equation*}
\frac{n^4\pi^4\min\left(\frac{1}{|k_n|^2},4r^2\right)}{|k_n|^2}+2n^2\pi^2\min\left(\frac{1}{|k_n|^2},4r^2\right)\leq \frac{(k+\pi)^4}{\delta^4}+2\frac{(k+\pi)^2}{\delta^2},\end{equation*}
and the following estimate holds
\begin{equation*} \begin{split} \Vert \nabla^2 u \Vert_{\text{L}^2(\Omega_r)}^2
\leq [\,\max\left(4r^2,\frac{1}{\delta^2}\right)\left(\frac{(k+\pi)^4}{\delta^2}+2(k+\pi)^2\right)\hspace{3cm}\\ \hspace{1cm}+\max((1+2kr)^2,4)\,]\, \Vert s \Vert_{\text{L}^2(\Omega_r)}^2.\end{split} \end{equation*}

To prove Proposition \ref{directg}, we deduce from the results in \cite{grisvard1} that there exists constants $d(r)$ and $\mu>0$, such that 
\begin{equation*} \Vert u_k \Vert_{\text{H}^2(\Omega_r)}\leq d(r)\left(\Vert -\Delta u_k +\mu u_k \Vert_{\text{L}^2(\Omega_r)}+\Vert b_1 \Vert_{\text{H}^{1/2}(-r,r)}+\Vert b_2 \Vert_{\text{H}^{1/2}(-r,r)}\right), \end{equation*}
and if follows that
\begin{equation*}\Vert u \Vert_{\text{H}^2(\Omega_r)} \leq d(r) \left((k^2+\mu) \Vert u \Vert_{\text{L}^2(\Omega_r)}+\Vert b_1 \Vert_{\text{H}^{1/2}(-r,r)}+\Vert b_2 \Vert_{\text{H}^{1/2}(-r,r)}\right).\end{equation*}
Using the same method as for the estimation of $\Vert u_{k,n}\Vert_{\text{L}^2(-r,r)}$ with the Young inequality, we get
\begin{equation*}  \Vert u \Vert_{\text{L}^2(\Omega_r)}^2\leq \left(\Vert b_1 \Vert^2_{\text{H}^{1/2}(-r,r)}+\Vert b_2 \Vert^2_{\text{H}^2{1/2}(-r,r)}\right) \left( \sum_{n<k/\pi} \frac{4r^2}{k_n^2}+\sum_{n>k/\pi} \frac{1}{k_n^4}\right).\end{equation*}
Finally, we obtain
\begin{equation*} D=d(r)\left((k^2+\mu)\max\left(2r,\frac{1}{\delta}\right)\sqrt{\sum_{n\in \N} \frac{1}{k_n^2}}+1\right).\end{equation*}

% You may incorporate your references as follows in your main tex file.
% Using BibTex is not recommended but can be handled.

\medskip
% The data information below will be filled by AIMS editorial staff
Received xxxx 20xx; revised xxxx 20xx.
\medskip

\end{document}